\theoremstyle{plain}
\numberwithin{equation}{section}
\newtheorem{problem}{Problem}
\newtheorem{theorem}{Theorem}[section]
\newtheorem{conjecture}[theorem]{Conjecture}
\newtheorem{corollary}[theorem]{Corollary}
\newtheorem{definition}[theorem]{Definition}
\newtheorem{lemma}[theorem]{Lemma}
\newtheorem{proposition}[theorem]{Proposition}
\newtheorem{remark}[theorem]{Remark}
\def\bn{\begin{definition}}
\def\en{\end{definition}}
\def\ba{\begin{array}}
\def\ea{\end{array}}
\def\be{\begin{equation}}
\def\ee{\end{equation}}
\def\bd{\begin{description}}
\def\ed{\end{description}}
\def\bu{\begin{enumerate}}
\def\eu{\end{enumerate}}
\def\bi{\begin{itemize}}
\def\ei{\end{itemize}}
\newcommand{\e}[2][]{e^{#1}_{#2}} 
\def\ds{\displaystyle}
\def\i{\mathfrak{i}}
\def\<{\langle}
\def\>{\rangle}
\newtheorem*{rep@theorem}{\rep@title}
\newcommand{\newreptheorem}[2]{%
\newenvironment{rep#1}[1]{%
 \def\rep@title{#2 \ref{##1} (restated)}%
 \begin{rep@theorem}}%
 {\end{rep@theorem}}}
\begin{document}
\title[]
{ M\"{o}bius Disjointness for product flows of rigid dynamical systems and affine linear flows}
\author[]{Fei Wei}

\address{Yau Mathematical Sciences Center, Tsinghua University, Beijing 100084, China}

\email{weif@mail.tsinghua.edu.cn}
\maketitle
\begin{abstract}
We obtain that Sarnak's  M\"{o}bius Disjointness Conjecture holds for product flows between affine linear flows on compact abelian groups of zero topological entropy and a class of rigid dynamical systems. To prove this, we show an estimate for the average value of the product of the M\"obius function and any polynomial phase over short intervals and arithmetic progressions simultaneously. In addition, we prove that the logarithmically averaged M\"obius Disjointness Conjecture holds for the product flow between any affine linear flow on a compact abelian group of zero entropy and any rigid dynamical system. As an application, we show that the logarithmically averaged M\"obius Disjointness Conjecture holds for every Lipschitz continuous skew product dynamical system on $\mathbb{T}^2$ over a rotation of the circle.
\\
\textsc{MSC2020}. 37A44, 11N37 \\
\textsc{Keywords}. M\"obius Disjointness Conjecture; rigid dynamical system; affine linear flow
\end{abstract}
\section{Introduction}\label{introduction}
Let $\mu(n)$ be the M\"{o}bius function,
that is,
$\mu(n)$ is $0$ when $n$ is not square-free (i.e., divisible by a nontrivial square),
and $(-1)^{r}$ when $n$ is the product of $r$ distinct primes. The M\"obius function plays an important role in number theory. For example, the Prime Number Theorem is known to be equivalent to $\sum_{n\leq N}\mu(n)=o(N)$.
The Riemann hypothesis holds if and only if $\sum_{n\leq N} \mu(n)= o(N^{\frac{1}{2}+ \epsilon})$ for every $\epsilon>0$.
It is widely believed that certain randomness exists in the values of the M\"obius function, and this randomness predicts significant cancellations in the summation of $\mu(n)\xi(n)$ for any ``reasonable" sequence $\xi(n)$. This rather vague principle is known as an instance of the ``M\"obius randomness principle" (see e.g., \cite[Section 13.1]{IK}). In \cite{Sar}, Sarnak made this principle precise by identifying the notion of a ``reasonable" sequence as an arithmetic function realized in a topological dynamical system\footnote{Let $(X,T)$ be a topological dynamical system (or a flow), that is $X$ is a compact metric space and $T:X\rightarrow X$ a continuous map. For $f\in C(X)$ and $x_{0}\in X$, we say $n\mapsto f(T^{n}x_{0})$ \emph{an arithmetic function realized in $(X,T)$}.}  with zero (topological) entropy.
Precisely,
\begin{conjecture}[M\"{o}bius Disjointness Conjecture]\label{moebius disjointness conjecture}
Let $X$ be a compact Hausdorff space and $T$ a continuous map on $X$ with zero topological entropy, then
\begin{equation}\label{standard average}
\lim_{N\rightarrow \infty}\frac{1}{N}\sum_{n=1}^{N}\mu(n)f(T^{n}x_{0})=0
\end{equation}
for any $x_{0}\in X$ and $f\in C(X)$.
\end{conjecture}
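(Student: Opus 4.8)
\medskip
\noindent\textbf{A proof proposal.} The plan is to attack Conjecture~\ref{moebius disjointness conjecture} through a structure-versus-randomness dichotomy, with the Bourgain--Sarnak--Ziegler orthogonality criterion as the engine. As a first reduction, replace $(X,T)$ by the closure $Y=\overline{\{T^{n}x_{0}:n\ge 0\}}$, which is again a zero-entropy system and in which $x_{0}$ has dense forward orbit; the statement \eqref{standard average} then concerns only the single bounded sequence $a(n)=f(T^{n}x_{0})$, and the task becomes: show that every arithmetic function realized in a zero-entropy system is asymptotically orthogonal to $\mu$. Since the sequences arising this way are exactly the uniform limits of bounded sequences measurable with respect to zero-entropy systems, one analyzes them through the $T$-invariant measures on $Y$ and their ergodic components.

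On the ``structured'' side, the zero-entropy hypothesis should confine the relevant correlations of $a$ to the Host--Kra--Ziegler characteristic factors of each ergodic component, where $a$ behaves like a polynomial nilsequence (possibly after passing through compact and distal extensions); there the Green--Tao theorem on the orthogonality of $\mu$ with nilsequences, fed into the Bourgain--Sarnak--Ziegler criterion, yields the required cancellation. On the ``random'' side --- the weakly mixing components and the relatively weakly mixing extensions, which carry no nilfactor structure at all --- one would instead verify the bilinear test of Bourgain--Sarnak--Ziegler directly: it suffices that
\[
\frac{1}{N}\sum_{n\le N}\Bigl|\frac{1}{M}\sum_{m\le M}a(pm)\overline{a(qm)}\Bigr|\longrightarrow 0
\]
as $M\to\infty$ for every pair of distinct primes $p,q$. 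Supplying this decay is where the short-interval machinery for multiplicative functions of Matom\"aki--Radziwi\l\l{} and, ultimately, inputs of Chowla type (in the spirit of Tao's deduction of the logarithmically averaged Sarnak conjecture from the logarithmically averaged Chowla conjecture) would have to be brought in.

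The main obstacle --- and the reason the conjecture remains open --- is precisely the gap between these two regimes, together with the passage from a single invariant measure to the topological statement. Zero \emph{topological} entropy imposes no uniform structure --- no bound on symbolic complexity, no rigidity, no usable Host--Kra tower --- valid simultaneously for every $x_{0}$ and every invariant measure, so the dichotomy above cannot at present be made to exhaust all zero-entropy systems; and in the genuinely random regime the needed estimate is essentially a form of Chowla's conjecture, known only in partial (two-point, odd-order, logarithmically averaged) forms, with the removal of the logarithmic averaging itself unresolved. A complete proof would therefore require either a genuinely topological structure theory for zero-entropy flows or the full strength of Chowla together with a de-logarithmization step; pending that, only partial progress --- covering the broad families of zero-entropy systems in which such structure is available --- is currently within reach.
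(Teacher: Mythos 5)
This statement is Sarnak's M\"obius Disjointness Conjecture itself; the paper does not prove it and offers no proof to compare against --- it is stated precisely as an open conjecture, and the paper's actual theorems establish only special cases (product flows of zero-entropy affine linear flows on compact abelian groups with certain rigid systems, and logarithmically averaged variants). Your proposal correctly recognizes this: it is not a proof but an accurate survey of the standard attack (reduction to the orbit closure, the Bourgain--Sarnak--Ziegler bilinear criterion, Host--Kra structure on the structured side with Green--Tao's nilsequence orthogonality, and Chowla-type input on the random side), together with an honest identification of the genuine obstructions --- the absence of a uniform structure theory for arbitrary zero-topological-entropy systems across all invariant measures, and the unavailability of the full (non-logarithmically-averaged) Chowla conjecture. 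Since no complete argument is claimed and none exists in the literature or in this paper, there is nothing to certify as correct or to fault as a gap beyond what you already acknowledge; the appropriate conclusion is simply that the statement remains conjectural, and that the paper's contribution is to enlarge the class of systems for which it is known, via the short-arithmetic-progression estimate of Theorem \ref{the theorem we should prove} combined with rigidity, rather than via the dichotomy you outline.
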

This conjecture has been verified for a variety of dynamical systems. See \cite{JB,JPT,EJMT5,FJ,FH,HLW,HWWZ,HWY,HWZ,Kan,LW,JP,CM16,Vee,Wang,Wei2,xu}, to list a few. One such system closely related to this paper is the rigid (measure-preserving) dynamical system. The notion of (measure theoretical) rigidity was introduced by Furstenberg and Weiss in \cite{FurWei}. A measure-preserving dynamical system\footnote{Meaning that $(X,\mathcal{B},\nu)$ is a probability space and $T$ is a measurable,
measure-preserving transformation on $X$.} $(X,\mathcal{B},\nu,T)$ is called \emph{rigid} if there is a sequence $\{n_{j}\}_{j=1}^{\infty}$ of positive integers such that for any $f(x)\in L^2(X,\nu)$,
\[\lim_{j\rightarrow \infty}\|f\circ T^{n_{j}}-f\|_{L^2(\nu)}^2=0.\]
Rigid dynamical systems contain dynamical systems with discrete spectrum (see e.g., \cite{KP7}), almost all interval exchange transformations \cite[Corollary 3]{Cha} and a large class of skew products on the torus over a rotation of the circle \cite{Anzai2}.

The main motivation of this research on product flows is as follows. The M\"obius disjointness of product flows between zero entropy dynamical systems and rigid dynamical systems (which is implied by Conjecture \ref{moebius disjointness conjecture}) is of great interest in number theory. It is closely related to the distribution of the average value of the product of the M\"obius function and any deterministic sequence (a sequence realized in zero entropy dynamical systems) over short arithmetic progressions.  The former implies that for any fixed modulo and for almost all short arithmetic progressions, the average value of the product between the M\"obius function and any deterministic sequence is small. Such an arithmetic property is important and has been extensively studied in number theory, see e.g., \cite{KM} for the  M\"obius function, \cite{F,HW} for the product of the M\"obius function and nilsequence. Based on this indication, to solve the M\"obius disjointness of product flows of a zero entropy dynamical system and rigid dynamical systems, we need to explore more properties of the distribution, such as the dependence on the modulo. In this paper, we study the case where the zero entropy dynamical system is an affine linear flows on compact abelian groups of zero topological entropy, and investigate properties of the distribution in this case.

An affine linear transformation
of a compact abelian group $X$ means $x\mapsto Ax+b$, where $A$ is an automorphism of $X$ and $b\in X$. A known example of arithmetic functions realized from affine transformations on abelian groups is $e^{2\pi\i P(n)}$ for any $P(x)\in \mathbb{R}[x]$. Throughout the paper, for simplicity,
we use $(X,\nu,T)$ to denote a measure-preserving dynamical system. For a topological dynamical system $(X,T)$  and $x_{0}\in X$, we use $\mathcal{M}(x_{0};X,T)$ to denote the weak*
closure of $\{\frac{1}{N}\sum_{n=0}^{N-1}\delta_{T^{n}x_{0}}: N=1,2,\ldots\}$ in the space of Borel probability measures on $X$. It is known that $\mathcal{M}(x_{0};X,T)$ is non-empty and for any $\nu\in \mathcal{M}(x_{0};X,T)$, $\nu$ is a $T$-invariant Borel probability measure on $X$ (see e.g., \cite[Theorem 6.9]{wal}). Our first main result is the following theorem.
\begin{theorem}\label{disjointfromasymptoticallyperiodicfunctionsb}
Let $(X_{1},T_{1})$ be an affine linear flow on a compact abelian group of zero topological entropy. Let $(X_{2},T_{2})$ be a flow such that for any $x_{0}\in X_{2}$ and any $\nu\in \mathcal{M}(x_{0};X_{2},T_{2})$, $(X_{2},\nu,T_{2})$ is rigid with the property that there are sequences $\{h_{j}\}_{j=1}^{\infty}$ and $\{s_{j}\}_{j=1}^{\infty}$ (may depend on $\nu$) of positive integers satisfying
\begin{equation}\label{restriction1b}
\lim_{j\rightarrow \infty}\frac{\log\log h_{j}}{\log h_{j}}\frac{s_{j}}{\varphi(s_{j})}=0
\end{equation}
and for any $g(x)\in L^2(X_{2},\nu)$,
\begin{equation}\label{asymptotical periodicity1b}
\lim_{j\rightarrow \infty}\frac{1}{h_{j}}\sum_{l=1}^{h_{j}}\|g\circ T_{2}^{ls_{j}}-g\|_{L^2(\nu)}^2=0,
\end{equation}
where $\varphi(s)$ is the Euler totient function.
Then the M\"obius Disjointness Conjecture holds for the product flow $(X_{1}\times X_{2},T_{1}\times T_{2})$.
\end{theorem}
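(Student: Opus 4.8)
The plan is to separate the two coordinates, reduce the $X_1$-factor to M\"obius sums twisted by polynomial phases along arithmetic progressions using the zero-entropy affine structure, and then play the rigidity of $(X_2,T_2)$ against the paper's main arithmetic estimate (cancellation of $\mu$ against a polynomial phase, averaged over short intervals and simultaneously restricted to a residue class).

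By Stone--Weierstrass---first applied to products $f\otimes g$, then to the characters of the compact abelian group $X_1$---it suffices to prove, for a fixed base point $(x_0,y_0)\in X_1\times X_2$, that $\frac1N\sum_{n\le N}\mu(n)\chi(T_1^nx_0)g(T_2^ny_0)\to0$ for every character $\chi$ of $X_1$ and every $g\in C(X_2)$. Since $(X_1,T_1)$ is an affine flow $x\mapsto Ax+b$ of zero topological entropy, the eigenvalues of the relevant automorphism are roots of unity, and a routine analysis of the orbit map produces a modulus $m_0=m_0(X_1,T_1,\chi)$ together with, for each residue $r$, a polynomial $P_r\in\R[x]$ of bounded degree such that $\chi(T_1^nx_0)=e^{2\pi\i P_r(n)}$ whenever $n\equiv r\ (\mathrm{mod}\ m_0)$. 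Writing $a(n)=\mathbf 1_{n\equiv r\,(m_0)}e^{2\pi\i P_r(n)}$, so that $|a(n)|\le1$, it is then enough to show $S_N:=\frac1N\sum_{n\le N}\mu(n)a(n)g(T_2^ny_0)\to0$ for each fixed $r$.

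Suppose not. I would choose $N_k\to\infty$ realizing $\limsup_N|S_N|$ and, after passing to a subsequence, arrange $\frac1{N_k}\sum_{n\le N_k}\delta_{T_2^ny_0}\to\nu$ weak${}^*$, so that $\nu\in\mathcal M(y_0;X_2,T_2)$; let $\{h_j\},\{s_j\}$ be the associated rigidity sequences from \eqref{restriction1b}--\eqref{asymptotical periodicity1b}. Fix $j$ and put $H=h_j$, $s=s_j$. The first step is an averaging trick: by Cauchy--Schwarz, $S_{N_k}$ differs from $\frac1{N_k}\sum_{n\le N_k}\mu(n)a(n)\big(\frac1H\sum_{l=1}^Hg(T_2^{n+ls}y_0)\big)$ by at most $\big(\frac1{N_k}\sum_{n\le N_k}\big|g(T_2^ny_0)-\frac1H\sum_{l=1}^Hg(T_2^{n+ls}y_0)\big|^2\big)^{1/2}$; since $x\mapsto\big|g(x)-\frac1H\sum_{l=1}^Hg(T_2^{ls}x)\big|^2$ is continuous on $X_2$, letting $k\to\infty$ this quantity tends to $\big\|g-\frac1H\sum_{l=1}^Hg\circ T_2^{ls}\big\|_{L^2(\nu)}\le\big(\frac1H\sum_{l=1}^H\|g\circ T_2^{ls}-g\|_{L^2(\nu)}^2\big)^{1/2}$, which goes to $0$ as $j\to\infty$ by \eqref{asymptotical periodicity1b}. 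In the averaged sum I would then replace $n$ by $n-ls$ and absorb the $O\big(Hs\,\|g\|_\infty/N_k\big)$ boundary contribution (which vanishes as $k\to\infty$ for fixed $j$), obtaining $\frac1{N_k}\sum_{n\le N_k}g(T_2^ny_0)\cdot\frac1H\sum_{l=1}^H\mu(n-ls)a(n-ls)+o_k(1)$.

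It remains to control $\frac1{N_k}\sum_{n\le N_k}\big|\frac1H\sum_{l=1}^H\mu(n-ls)a(n-ls)\big|$. For fixed $n$, as $l$ runs over $[1,H]$ subject to $n-ls\equiv r\ (\mathrm{mod}\ m_0)$, the integers $n-ls$ form an arithmetic progression of common difference $q:=\mathrm{lcm}(s,m_0)$ and $\asymp_{m_0}H$ terms, all lying in the short interval $(n-Hs,n]$, while $e^{2\pi\i P_r(n-ls)}$, after reindexing, is a polynomial phase of degree $\deg P_r$ in the summation index; moreover $q/\varphi(q)\le(m_0/\varphi(m_0))\cdot s/\varphi(s)$. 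Hence the paper's estimate for the M\"obius function twisted by a polynomial phase over short intervals and arithmetic progressions (uniform in the residue class and in the coefficients of the phase, for bounded degree) applies and bounds the above average by $\ll_{m_0,\deg P_r}\frac s{\varphi(s)}\cdot\frac{\log\log H}{\log H}+o_k(1)$. Putting the three steps together, $\limsup_N|S_N|\ll_{m_0,\deg P_r}\|g\|_\infty\cdot\frac{s_j}{\varphi(s_j)}\frac{\log\log h_j}{\log h_j}+o_j(1)$ for every $j$; letting $j\to\infty$ and invoking \eqref{restriction1b} forces $\limsup_N|S_N|=0$. Summing over the finitely many residues $r$ then gives $\frac1N\sum_{n\le N}\mu(n)\chi(T_1^nx_0)g(T_2^ny_0)\to0$, which by the reduction above completes the proof. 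The hard part is precisely the arithmetic input just used: a Matom\"aki--Radziwi\l\l--Tao type cancellation bound for $\mu$ twisted by an arbitrary polynomial phase, valid on average over short intervals \emph{and} restricted to a residue class modulo $q$, with the explicit dependence $\tfrac{q}{\varphi(q)}\cdot\tfrac{\log\log H}{\log H}$ on the modulus and the window length---this is exactly what makes hypothesis \eqref{restriction1b} the natural one, and it is the technical heart of the argument; the remaining ingredients are soft functional-analytic and ergodic-theoretic manipulations.
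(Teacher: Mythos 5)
Your proposal is correct and follows essentially the same route as the paper: Stone--Weierstrass reduction to characters of $X_1$, the affine structure yielding piecewise polynomial phases on residue classes, passage to a weak* limit $\nu\in\mathcal{M}(y_0;X_2,T_2)$ combined with the rigidity hypotheses \eqref{restriction1b}--\eqref{asymptotical periodicity1b}, and the arithmetic input of Theorem \ref{the theorem we should prove} (via the identity underlying Lemma \ref{the distribution of mobius in short arithmetic interval}). The only cosmetic difference is that you fold the fixed modulus $m_0$ into the progression via $q=\mathrm{lcm}(s_j,m_0)$, whereas the paper absorbs the indicator $1_{n\equiv l(w)}$ into the polynomial phase as a sum of additive characters; both are legitimate since the bound is uniform in the modulus with the explicit $q/\varphi(q)$ dependence.
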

It is easy to see from condition (\ref{asymptotical periodicity1b}) that there is a sequence $\{n_{j}\}_{j=1}^{\infty}$ of positive integers such that for any $g(x)\in L^2(X_{2},\nu)$, $\lim_{j\rightarrow \infty}\|g\circ T_{2}^{n_{j}}-g\|_{L^2(\nu)}^2=0$. Conditions (\ref{restriction1b}) and (\ref{asymptotical periodicity1b}) are restrictions on the speed of rigidity for $(X_{2},\nu,T_{2})$. There are many rigid dynamical systems satisfying these conditions, such as measure-preserving dynamical systems with discrete spectrum \cite[Proposition 5.4]{Wei2}, and measure-preserving dynamical systems with BPV rigidity and PR rigidity\footnote{It was explained in \cite[Remark 8.1]{Wei2} that both BPV rigidity and PR rigidity are included in the scenario described by conditions (\ref{restriction1b}), (\ref{asymptotical periodicity1b}). Also it was explained in \cite[Remark 8.4]{Wei2} that there is a measure-preserving dynamical system with discrete spectrum, but not satisfying BPV rigidity and PR rigidity.}, which was defined and studied in \cite{Kan}. In Lemma \ref{disjointfromasymptoticallyperiodicfunctions1b}, we shall prove a more general version of Theorem \ref{disjointfromasymptoticallyperiodicfunctionsb}.

Our second main result is about the estimate on the average value of the product of the M\"obius function and any polynomial phase over short intervals and arithmetic progressions simultaneously. It is the main ingredient of proving Theorem \ref{disjointfromasymptoticallyperiodicfunctionsb}. Below, we adopt the standard notation $e(\alpha):=e^{2\pi \i\alpha}$ for $\alpha\in \mathbb{R}$.
\begin{theorem}\label{the theorem we should prove}
Let $s\geq 1$ and $h\geq 3$ be integers. Let $d\geq 0$ be an integer. Suppose $P(x)\in \mathbb{R}[x]$ is of degree $d$. Then we have
\begin{equation}\label{an explict estimate of s and h}
\limsup_{N\rightarrow \infty}\frac{1}{Ns}\sum_{a=1}^s \sum_{n=1}^{N}\left|\frac{1}{h}\sum_{\substack{m=n+1\\m\equiv a(mod~s)}}^{n+hs}\mu(m)e(P(m))\right|\ll\frac{s}{\varphi(s)}\frac{\log\log h}{\log h},
\end{equation}
where the implied constant at most depends on $d$.
\end{theorem}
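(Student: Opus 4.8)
The plan is to square the average by Cauchy--Schwarz, expand the resulting second moment, and reduce matters to an averaged form of the binary Chowla conjecture twisted by a polynomial phase. Since every inner expression $\frac1h\sum_{n<m\le n+hs,\ m\equiv a\,(s)}\mu(m)e(P(m))$ is an average of $h$ quantities of modulus at most $1$, Cauchy--Schwarz gives
\[
\left(\frac1{Ns}\sum_{a=1}^s\sum_{n=1}^N\Big|\tfrac1h\!\!\sum_{\substack{n<m\le n+hs\\ m\equiv a\,(s)}}\!\!\mu(m)e(P(m))\Big|\right)^{2}\le \frac1{Ns}\sum_{a=1}^s\sum_{n=1}^N\Big|\tfrac1h\!\!\sum_{\substack{n<m\le n+hs\\ m\equiv a\,(s)}}\!\!\mu(m)e(P(m))\Big|^{2},
\]
so it suffices to bound the $\limsup$ of the right-hand side by $O_d\!\big((\tfrac{s}{\varphi(s)}\tfrac{\log\log h}{\log h})^{2}\big)$.

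Expanding the square produces a double sum over $m_1,m_2$ in the progression $a\pmod s$ inside $(n,n+hs]$; carrying out the sum over $a\in\{1,\dots,s\}$ collapses the two congruence conditions into the single condition $m_1\equiv m_2\pmod s$, since for each pair exactly one residue $a$ can occur. Interchanging the $n$-sum with the $(m_1,m_2)$-sum, the number of admissible $n$ equals $\max(0,hs-|m_1-m_2|)$ up to $O(hs)$ boundary terms (negligible after normalising). Writing $m_2=m_1+ts$ with $|t|<h$, which is precisely the encoding of $m_1\equiv m_2\pmod s$, the second moment becomes
\[
\frac1{Nh^{2}}\sum_{|t|<h}(h-|t|)\sum_{m\le N}\mu(m)\mu(m+ts)\,e\big(P(m)-P(m+ts)\big)+o(1),
\]
where $P(x)-P(x+ts)$ is a polynomial in $x$ of degree $\le d-1$ whose leading coefficient is a multiple of $ts$. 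The diagonal contribution $t=0$ equals $\frac1{Nh}\sum_{m\le N}\mu^{2}(m)=\frac{6}{\pi^{2}h}+o(1)$, and since $1/h\ll(\log\log h/\log h)^{2}$ for every integer $h\ge 3$ (the hypothesis $h\ge 3$ guarantees $\log\log h>0$), this already lies inside the claimed bound.

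The remaining, and main, work is the off-diagonal range $0<|t|<h$; using $h-|t|\le h$ it is dominated by
\[
\limsup_{N\to\infty}\frac1{Nh}\sum_{0<|t|<h}\Big|\sum_{m\le N}\mu(m)\mu(m+ts)\,e\big(P(m)-P(m+ts)\big)\Big|,
\]
an average --- over shifts that are multiples of $s$ --- of binary M\"obius correlations carrying a polynomial phase of degree $\le d-1$. For $d\le 1$ the phase is constant in $m$ and this is exactly the averaged Chowla estimate of Matom\"aki--Radziwi\l\l--Tao; for $d\ge 2$ one needs its variant with a polynomial twist, which is exactly the input one expects to be available from the same circle of ideas (non-pretentiousness of $\mu$ against polynomial phases together with the short-interval mean-value method), cf.\ also \cite{Wei2}. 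Running this estimate with the shifts constrained to $s,2s,\dots,(h-1)s$ --- the step responsible for the factor $\tfrac{s}{\varphi(s)}$ --- yields a bound $O_d\!\big((\tfrac{s}{\varphi(s)}\tfrac{\log\log h}{\log h})^{2}\big)$; combining with the diagonal estimate and taking square roots gives \eqref{an explict estimate of s and h}. I expect the delicate point to be extracting the quantitative rate $(\log\log h/\log h)^{2}$, rather than a mere $o(1)$, uniformly in $s$: this is precisely what dictates the shape of the right-hand side, and the factor $\tfrac{s}{\varphi(s)}$ is the price paid for the density and coprimality losses incurred once the gaps $ts$ are forced to be divisible by $s$.
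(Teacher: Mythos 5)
Your proposal contains a genuine gap at the Cauchy--Schwarz step, and the gap is not cosmetic: it loses a square root that cannot be recovered. After you square, you need the second moment to be $O\!\big((\tfrac{s}{\varphi(s)}\tfrac{\log\log h}{\log h})^{2}\big)$, but the averaged binary Chowla estimate of Matom\"aki--Radziwi\l\l--Tao (and its polynomial-twisted relatives) only gives $O\!\big(\tfrac{s}{\varphi(s)}\tfrac{\log\log h}{\log h}\big)$ for the off-diagonal average. Indeed, the averaged Chowla bound is \emph{equivalent}, after the standard rearrangement, to the Matom\"aki--Radziwi\l\l\ short-interval $L^2$ estimate, whose saving is $\tfrac{\log\log h}{\log h}$ -- the first power, not the second. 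So your Cauchy--Schwarz step would yield at best $\limsup \ll \big(\tfrac{s}{\varphi(s)}\tfrac{\log\log h}{\log h}\big)^{1/2}$, strictly weaker than the statement. The paper's own Lemma~\ref{the distribution of mobius in short arithmetic interval} makes this explicit: the $L^2$ version has only the first power on the right, and it is \emph{derived from} Theorem~\ref{the theorem we should prove} (via the trivial bound $|\sum_l|\le h$), not the other way around. Your argument reverses the arrow of implication and then silently inflates the exponent on the target side.

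The paper's actual route goes directly at the $L^1$ average: it reduces to a version for general $1$-bounded multiplicative functions (Theorem~\ref{the estimate of general multiplicative functions}) plus non-pretentiousness of $\mu$, and proves that theorem via the quantitative factorization of Theorem~\ref{decomposition theorem} and a major/minor arc split. Crucially, the $\tfrac{s}{\varphi(s)}\tfrac{\log\log h}{\log h}$ factor in the main bound does \emph{not} come from a multiplicative-function mean-value theorem at all; it comes from a sieve estimate (Lemma~\ref{an estimate needed in the major arc}) counting the few $n+h$ lacking a prime factor coprime to $s$ in the first window $[P_1,Q_1]$ with $P_1\sim(\log H)^{6000}$, $Q_1\sim H/(\log H)^4$. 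Only after restricting to the complementary dense set $\mathcal{S}$ does the paper apply Cauchy--Schwarz against an $L^2$ input (Corollary~\ref{a corollary need to be used}, built from \cite{KMT}), and by then the surviving contribution is much smaller than $\tfrac{\log\log h}{\log h}$, so the square-root loss is harmless. Applying Cauchy--Schwarz before any such decomposition, as you do, is exactly where the argument breaks. Separately, the machinery you wave at as ``the input one expects to be available from the same circle of ideas'' for $d\ge 2$ is a substantial part of the paper (the quantitative factorization, the minor-arc equidistribution analysis over arithmetic progressions in Proposition~\ref{small portion with not equidistributed} and Appendix~\ref{aproofofaproposition}); it is not an off-the-shelf black box.
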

We list some results related to formula (\ref{an explict estimate of s and h}). For $s=1$, (\ref{an explict estimate of s and h}) becomes
\begin{equation}\label{the average of mobius in short interval}
\limsup_{N\rightarrow \infty}\frac{1}{N}\sum_{n=1}^{N}\left|\frac{1}{h}\sum_{m=n+1}^{n+h}\mu(m)e(P(m))\right|\ll \frac{\log\log h}{\log h}.
\end{equation}
As a consequence of the results of  Matom\"{a}ki-Radziwi{\l}{\l} \cite{KM} and  Matom\"{a}ki-Radziwi{\l}{\l}-Tao \cite{KMT17}, the above estimate holds for $P(x)=e(\alpha x)$, $\alpha\in \mathbb{R}$. In \cite{HW}, He-Wang further extended the result to any $P(x)\in \mathbb{R}[x]$, where the implied constant at most depends on the degree of $P(x)$.
Moreover, they proved that $e(P(n))$  can be replaced by any nilsequence $n \mapsto F(g^{n}x_{0})$.  Here $x_{0}\in G/\Gamma$ (a nilmanifold), $g\in G$ and $F:G/\Gamma\rightarrow \mathbb{C}$ is a Lipschitz function. See also \cite{EALdlR}, \cite{FFKPL} for earlier partial results in this direction.

For general $s$, Kanigowski, Lema{\'{n}}czyk and Radziwi{\l}{\l} \cite{Kan} showed that when $P(x)$ is a constant (i.e., $d=0$), then for any $\varepsilon>0$ and $h,s$ large enough (at most depending on $\epsilon$), the left-hand side of formula (\ref{an explict estimate of s and h})$\leq \epsilon$ whenever $\sum_{p|s}1/p\leq (1-\epsilon)\sum_{p\leq h}1/p$. In Theorem \ref{the theorem we should prove}, we showed that a similar estimate holds for all polynomials $P(x)$. Note that $\frac{s}{\varphi(s)}=\prod_{p|s}(1-1/p)^{-1}$ and the estimate $\sum_{p\leq h}1/p=\log\log h+O(1)$. So the above mentioned result in \cite{Kan} is contained in Theorem \ref{the theorem we should prove}.
Moreover, we study a general version of Theorem \ref{the theorem we should prove} replacing $\mu$ by any bounded multiplicative function. This will be given in Theorem \ref{the estimate of general multiplicative functions}.

We below explain the basic idea of proving Theorem \ref{the theorem we should prove}. The proof is inspired by Green-Tao's work on the average of the M\"obius function and nilsequences over long intervals \cite{GT2, GT}, and He-Wang's work on the average of the M\"obius function and nilsequences over short intervals \cite{HW}. The main technique they used is different versions of quantitative factorization of finite polynomials in nilmanifolds. For our situation, we should estimate the average of the M\"obius function and polynomial phases over short arithmetic progressions. Firstly, we give an appropriate quantitative factorization of finite polynomials in abelian nilmanifolds. Secondly, we use the nilpotent ``major and minor arcs" method, introduced in \cite{GT}. In the major arc part, we use the result \cite[Theorem 1.6]{KMT} on sums of multiplicative functions over almost all short arithmetic progressions. In the minor arc case we use a variant of the arguments of He-Wang \cite[Section 6]{HW} to obtain the cancellation we need.

It is still open whether the M\"obius Disjointness Conjecture holds for all rigid dynamical systems or the product flows between them and affine linear flows on compact abelian groups of zero topological entropy. More precisely,

\begin{problem}\label{sarnak's conjecture for rigid dynamical systems}
Let $(X_{1},T_{1})$ be an affine linear flow on a compact abelian group of zero topological entropy. Let $(X_{2},T_{2})$ be a flow such that for any $\nu$, a $T_{2}$-invariant Borel probability measure on $X_{2}$, $(X_{2},\nu, T_{2})$ is rigid. Let $T=T_{1}\times T_{2}$. Is it true that for any $F\in C(X_{1}\times X_{2})$ and $x\in X_{1}\times X_{2}$,
\begin{equation}\label{720formula1}
\lim_{N\rightarrow \infty}\frac{1}{N}\sum_{n=1}^{N}\mu(n)F(T^{n}x)=0.
\end{equation}
\end{problem}
Problem \ref{sarnak's conjecture for rigid dynamical systems} is closely related to the coefficient of $\frac{\log\log h}{\log h}$ in the right-hand side of (\ref{an explict estimate of s and h}). We explain the connection as follows. Usually the estimate of the left-hand side of (\ref{an explict estimate of s and h}) equals $o_{s}(h)$. We hope $o_s(h) = o(h)$, that is, the little ``o" term is independent of $s\geq 1$. This is related to the Chowla conjecture \cite{Ch} (Remark \ref{related to Chowla's conjecture}) and gives a sufficient condition to Problem \ref{sarnak's conjecture for rigid dynamical systems}. So we need to make explicitly the dependence of $o_s$ on $s$.
From Theorem \ref{the theorem we should prove}, we know that there is a function $\phi$ such that $o_s(h)=\phi(s)\frac{\log\log h}{\log h}$. So if we restrict the decay rate of the rigidity
\[\|g\circ T_{2}^{s_{j}}-g\|_{L^2(\nu)}^2\]
of $(X_{2},\nu, T_{2})$ to $O\Big(\exp\big(-(\phi(s_{j}))^{A}\big)\Big)$ for a sequence $\{s_{j}\}_{j=1}^{\infty}$ with $\lim_{j\rightarrow \infty}\phi(s_{j})=\infty$ and a given $A>1$, then we have the M\"obius disjointness for $(X_{1}\times X_{2},T_{1}\times T_{2})$.

We have a positive answer to Problem \ref{sarnak's conjecture for rigid dynamical systems} under the logarithmic average, namely we restrict formula (\ref{720formula1}) to logarithmic average $\frac{1}{\log N}\sum_{n=1}^{N}$ rather than the standard Ces\`{a}ro average $\frac{1}{N}\sum_{n=1}^{N}$.
\begin{theorem}\label{0707theorem1}
Make the same assumptions as Problem \ref{sarnak's conjecture for rigid dynamical systems}. Then the logarithmically averaged M\"obius Disjointness Conjecture holds for the product flow $(X_{1}\times X_{2},T_{1}\times T_{2})$. That is, for any $F\in C(X_{1}\times X_{2})$ and $x\in X_{1}\times X_{2}$,
\begin{equation}\label{equation for Proposition 1.8a}
\lim_{N\rightarrow \infty}\frac{1}{\log N}\sum_{n=1}^{N}\frac{\mu(n)F(T^{n}x)}{n}=0.
\end{equation}
where $T=T_{1}\times T_{2}$.
\end{theorem}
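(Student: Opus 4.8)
The plan is to deduce this from the logarithmically averaged two‑point Chowla conjecture by means of a rigidity‑averaging argument. Writing $x=(x_1,x_2)$, I would first use Stone--Weierstrass and linearity to reduce to $F=f\otimes g$ with $f\in C(X_1)$, $g\in C(X_2)$, and then approximate $f$ in $C(X_1)$ by a trigonometric polynomial, so that it suffices to take $f$ to be a character $\chi$ of $X_1$. Writing $T_1x=Ax+b$, the hypothesis of zero topological entropy forces the dual automorphism of $X_1$ to be quasi‑unipotent, whence $n\mapsto\chi(T_1^nx_1)$ is a finite $\mathbb{C}$‑linear combination of polynomial phases $e(Q(n))$, $Q\in\mathbb{R}[x]$ of bounded degree (this is standard for zero‑entropy automorphisms of compact abelian groups). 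Thus the theorem is reduced to proving, for each $P\in\mathbb{R}[x]$, each $g\in C(X_2)$ and each $x_2\in X_2$,
\[
\lim_{N\to\infty}\frac1{\log N}\sum_{n\le N}\frac{\mu(n)\,e(P(n))\,g(T_2^n x_2)}{n}=0.
\]

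Let $S_N$ denote this sum. I would pass to a subsequence $N_i\to\infty$ realizing $\limsup_N|S_N|$ along which the logarithmic empirical measures $\frac1{\log N_i}\sum_{n\le N_i}\frac1n\delta_{T_2^n x_2}$ converge weak$^*$ to a $T_2$‑invariant measure $\nu$; by hypothesis $(X_2,\nu,T_2)$ is rigid, so there are times $s_j$ with $\delta_j:=\|g-g\circ T_2^{s_j}\|_{L^2(\nu)}\to 0$. The core step is rigidity averaging: fixing $K\ge1$ and $s=s_j$, the estimate $\|g-g\circ T_2^{ks}\|_{L^2(\nu)}\le k\delta_j$ together with weak$^*$ convergence lets me replace $g(T_2^n x_2)$ by $\frac1K\sum_{k=1}^{K}g(T_2^{n+ks}x_2)$ at the cost of an error bounded by $K\delta_j$. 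After the substitution $n\mapsto n-ks$ in the $k$‑th term (a negligible change, since $ks$ is fixed), I would extract $g(T_2^n x_2)$ by the Cauchy--Schwarz inequality against the logarithmic weight and expand the square: the diagonal $k=k'$ contributes $\tfrac{6}{\pi^2 K}+o_i(1)$, while the off‑diagonal terms take the form
\[
\frac1{\log N_i}\sum_{m\le N_i}\frac{\mu(m)\,\mu(m-r)\,e(R_r(m))}{m},\qquad r=(k-k')s\ne 0,\quad R_r(x):=P(x-r)-P(x),
\]
with $\deg R_r=\deg P-1$.

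It then remains to know that for every integer $r\ne 0$ and every $R\in\mathbb{R}[x]$,
\[
\lim_{N\to\infty}\frac1{\log N}\sum_{m\le N}\frac{\mu(m)\,\mu(m-r)\,e(R(m))}{m}=0,
\]
that is, the logarithmically averaged two‑point Chowla conjecture twisted by a polynomial phase; for $R$ constant this is Tao's theorem, and the polynomial twist is covered by the structure theory for logarithmically averaged correlations of multiplicative functions (Tao, Tao--Ter\"av\"ainen). Granting this, the off‑diagonal terms tend to $0$, giving $\limsup_N|S_N|\le\|g\|_\infty\sqrt{6/(\pi^2 K)}+K\delta_j$ for every $K$ and $j$; since $\nu$ — and hence the sequence $(\delta_j)$ — is pinned down before $K$ is chosen, I can let $j\to\infty$ and then $K\to\infty$ to conclude $\limsup_N|S_N|=0$. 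This order of quantifiers is exactly why no quantitative rate on the rigidity is required here, in contrast with Theorem~\ref{disjointfromasymptoticallyperiodicfunctionsb}.

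The hard part will be the passage to a genuine \emph{two‑point} correlation of $\mu$: it is this that lets the unconditional logarithmic Chowla/Elliott results enter. In the Ces\`aro setting one is stuck with a single average of $\mu$ over short arithmetic progressions, controlled only through the $s/\varphi(s)$‑lossy bound of Theorem~\ref{the theorem we should prove}, which would force a rate assumption. The remaining delicate points are the treatment of the degree‑$(\deg P-1)$ polynomial twist inside the two‑point correlation, and the justification of the reduction of the affine flow to honest polynomial phases via quasi‑unipotence.
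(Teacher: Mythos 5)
Your argument is correct and follows essentially the same route as the paper's proof (Proposition \ref{0706afternoonformula8} there). Both proofs reduce via Stone--Weierstrass and the structure of zero-entropy affine flows to showing $\frac{1}{\log N}\sum_{n\le N}\frac{\mu(n)e(P(n))g(T_2^n x_2)}{n}\to 0$; both pass to a subsequence along which the logarithmic empirical measures converge to a rigid $T_2$-invariant measure $\nu$; both then average $g$ over an arithmetic progression of common difference a rigidity time $s_j$, apply Cauchy--Schwarz against the logarithmic weight, and expand the square, so that the diagonal gives $\frac{6}{\pi^2 K}$ and the off-diagonal terms reduce to the logarithmically averaged two-point Chowla estimate twisted by a polynomial phase (the paper cites the remark after Frantzikinakis--Host \cite{FH} together with Tao \cite{Tao16} for equation (\ref{215formula41}), where you invoke Tao and Tao--Ter\"av\"ainen — both are appropriate sources). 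The paper presents the same computation in a slightly different order — it first establishes, from (\ref{215formula41}) and Cauchy--Schwarz, the $s$-uniform estimate (\ref{1212formula41}) on $\limsup_N\frac{1}{\log N}\sum_n\frac{1}{n}\big|\frac{1}{h}\sum_{l\le h}\mu(n+ls)e(P(n+ls))\big|$ and then feeds it into the contradiction-plus-telescoping scheme of Proposition \ref{a sufficient condition for problem2a} — but your direct `replace, shift, square and send $j\to\infty$ then $K\to\infty$' presentation is the same mechanism, including the telescoping bound $\|g - g\circ T_2^{ks}\|_{L^2(\nu)}\le k\delta_j$ that the paper uses in (\ref{12/10formula2}). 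Your closing remarks correctly identify why the logarithmic two-point Chowla input removes the need for a quantitative rigidity rate, matching the paper's explanation.
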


The major ingredients of the proof of Theorem \ref{0707theorem1} are the method used to prove Theorem \ref{disjointfromasymptoticallyperiodicfunctionsb}, Tao's result \cite{Tao16}, and Frantzikinakis-Host's result \cite{FH} on logarithmic average of the self-correlations of the M\"obius function twisted by polynomial phases (see equation (\ref{215formula41})).
We also refer to \cite{HXY} for other results on logarithmically averaged M\"obius Disjointness Conjecture.

It was considered in \cite{Fav,HWY,Kan,KL,JP,Wang} that the M\"obius disjointness for the skew product dynamical systems $(\mathbb{T}^2, T_{\alpha,\psi} )$, where $\mathbb{T}^{1}=\mathbb{R}/\mathbb{Z}$, $\mathbb{T}^2=\mathbb{T}^{1}\times\mathbb{T}^{1}$, $\alpha\in [0,1)$ and
\begin{equation}\label{formula718}
T_{\alpha, \psi}(x,y)=(x+\alpha, y+\psi(x))
\end{equation}
with $\psi$ a continuous map from $\mathbb{T}^{1}$ to itself. So far, it is known that the M\"obius Disjointness Conjecture holds for $(\mathbb{T}^2, T_{\alpha,\psi})$ with $\psi$ of class $C^{1+\epsilon}$. That is for any $\alpha\in [0,1)$ and $\psi$ of class $C^{1+\epsilon}$,
\[
\lim_{N\rightarrow \infty}\frac{1}{N}\sum_{n=1}^{N}\mu(n)f(T^{n}_{\alpha,\psi}(x_{0},y_{0}))=0,~~~\forall (x_{0},y_{0})\in \mathbb{T}^2,~~~\forall f\in C(\mathbb{T}^2).
\]
It is not known whether the M\"obius disjointness of $(\mathbb{T}^2, T_{\alpha,\psi})$ holds for any $\psi$ of Lipschitz continuous. As a corollary of Theorem \ref{0707theorem1}, we obtain that the logarithmically averaged M\"obius Disjointness Conjecture holds for this class of skew products.

\begin{corollary}\label{0707corollary1}
Suppose that $\alpha\in [0,1)$, $\psi:\mathbb{T}^{1}\rightarrow \mathbb{T}^{1}$ is Lipschitz continuous and $T_{\alpha, \psi}$ is in (\ref{formula718}). Then the logarithmically averaged M\"obius Disjointness Conjecture holds for product flows between $(\mathbb{T}^2, T_{\alpha,\psi})$ and affine linear flows on compact abelian groups of zero topological entropy. In particular, for any $P(x)\in \mathbb{R}[x]$, $(x_{0},y_{0})\in \mathbb{T}^2$ and $f\in C(\mathbb{T}^2)$,
\begin{equation}\label{0707equation1}
\lim_{N\rightarrow \infty}\frac{1}{\log N}\sum_{n=1}^{N}\frac{\mu(n)e(P(n))f(T^{n}_{\alpha,\psi}(x_{0},y_{0}))}{n}=0.
\end{equation}
\end{corollary}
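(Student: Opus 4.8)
The plan is to deduce Corollary~\ref{0707corollary1} from Theorem~\ref{0707theorem1}, after peeling off from the skew product a polynomial phase produced by the topological degree of $\psi$. Fix an affine linear flow $(X_1,T_1)$ on a compact abelian group of zero topological entropy, $F_1\in C(X_1)$, $x_1\in X_1$, $f\in C(\mathbb{T}^2)$ and $(x_0,y_0)\in\mathbb{T}^2$; we must show $\frac1{\log N}\sum_{n\le N}\frac{\mu(n)F_1(T_1^nx_1)f(T_{\alpha,\psi}^n(x_0,y_0))}{n}\to0$. Since trigonometric polynomials are dense in $C(\mathbb{T}^2)$, it suffices to treat $f(x,y)=e(ay+bx)$ with $a,b\in\mathbb Z$ (the ``in particular'' statement being the case where $(X_1,T_1)$ is chosen to realise $e(P(n))$). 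Lift $\psi$ to $\widehat\psi\colon\mathbb{R}\to\mathbb{R}$, set $d=\deg\psi$ (so $\widehat\psi(x+1)=\widehat\psi(x)+d$), $c:=\int_{\mathbb{T}^1}(\widehat\psi(t)-dt)\,dt$, and $\phi(x):=\widehat\psi(x)-dx-c$; then $\phi$ is a $1$-periodic Lipschitz function of mean zero, and
\[
\sum_{j=0}^{n-1}\psi(x_0+j\alpha)\ \equiv\ d\Big(nx_0+\binom{n}{2}\alpha\Big)+nc+\sigma_n(x_0)\pmod 1,\qquad \sigma_n(x):=\sum_{j=0}^{n-1}\phi(x+j\alpha).
\]
Hence, with $Q(x):=ad\binom{x}{2}\alpha+(adx_0+b\alpha+ac)x\in\mathbb R[x]$ (of degree $\le2$),
\[
F_1(T_1^nx_1)\,f\big(T_{\alpha,\psi}^n(x_0,y_0)\big)\ =\ e(ay_0+bx_0)\cdot\big(F_1(T_1^nx_1)\,e(Q(n))\big)\cdot e\big(a\sigma_n(x_0)\big).
\]

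The middle factor is an arithmetic function realised in $(X_1\times\mathbb{T}^2,\,T_1\times W)$, where $W(u,v)=(u+\theta,v+u)$ is the affine skew shift of $\mathbb{T}^2$ and $\theta,(u_0,v_0)$ are chosen, in the standard way, so that the second coordinate of $W^n(u_0,v_0)$ equals $Q(n)$; this product is again an affine linear flow on a compact abelian group of zero topological entropy, say $(X_1',T_1')$. The last factor equals $F_2\big(T_2^n(x_0,0)\big)$ with $F_2(x,y)=e(ay)$ and $T_2:=T_{\alpha,\phi}$ (identifying $\phi$ with its reduction mod $1$). Putting $X_2:=\overline{\{T_2^n(x_0,0):n\ge0\}}$, the corollary follows by applying Theorem~\ref{0707theorem1} to the product flow $(X_1'\times X_2,\ T_1'\times T_2|_{X_2})$ once one knows that every $T_2$-invariant Borel probability measure $\nu$ on $X_2$ makes $(X_2,\nu,T_2)$ rigid. (If $a=0$ the last factor is absent and one is reduced to $\mu(n)$ times a linear phase times $F_1(T_1^nx_1)$, i.e.\ to the case of Theorem~\ref{0707theorem1} with $X_2$ a single point.)

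It remains to establish this rigidity, which I would do by splitting on $\alpha$; here working with the orbit closure $X_2$ rather than all of $\mathbb{T}^2$ is essential, because when $\alpha$ is rational the full skew product $T_{\alpha,\phi}$ on $\mathbb{T}^2$ may carry a Lebesgue spectral component and thus fail to be rigid. If $\alpha=p/q\in\mathbb Q$, the first coordinate of $T_2^n(x_0,0)$ is $q$-periodic in $n$ and the second is affine in $\lfloor n/q\rfloor$, so $X_2$ is a finite union of circles (or of points) cyclically permuted by $T_2$, with $T_2^q$ acting on each as the rotation by $\beta:=\sum_{j=0}^{q-1}\phi(x_0+jp/q)$; a change of coordinates in the fibres exhibits $(X_2,T_2)$ as a minimal rotation on a compact abelian group (a quotient of $(\mathbb Z/q\mathbb Z)\times\mathbb{T}^1$), hence with discrete spectrum and rigid for every invariant measure. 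If $\alpha\notin\mathbb Q$, then for any $T_2$-invariant $\nu$ the pushforward of $\nu$ under $(x,y)\mapsto x$ is a $T_\alpha$-invariant measure of full support, hence Lebesgue by unique ergodicity of the irrational rotation; disintegrating $\nu=\int\nu_x\,dx$ with $\nu_{x+\alpha}=(R_{\phi(x)})_*\nu_x$ and using the Denjoy--Koksma inequality — available since the Lipschitz $\phi$ has bounded variation and mean zero, so $\|\sigma_{q_n}\|_\infty\le\operatorname{Var}(\phi)$ along the denominators $q_n$ of $\alpha$ — one produces a rigidity time for $T_2$ in $L^2(\nu)$. This is the classical fact that bounded-variation skew products of degree zero over irrational circle rotations are rigid (cf.\ \cite{Anzai2}).

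The step I expect to be the main obstacle is this last one. Denjoy--Koksma controls $\sigma_{q_n}$ only uniformly, not in $L^2$ (its Fourier coefficients need not be small when $\alpha$ has large partial quotients), so some care is needed to extract a subsequence $(m_j)$ along which $I$ is a weak limit point of $U_{T_2}^{m_j}$; and this must be achieved for \emph{every} invariant $\nu$, including those whose conditional measures $\nu_x$ on the fibres are non-trivial, which is where the equivariance $\nu_{x+\alpha}=(R_{\phi(x)})_*\nu_x$ enters. The remaining ingredients — the Fourier reduction in $f$, the realisation of $e(Q(n))$ by the skew shift, the verification that $X_1'$ is an affine linear flow on a compact abelian group of zero entropy, and the bookkeeping of $Q$ — are routine.
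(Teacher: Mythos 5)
Your reduction is structurally the same as the paper's: approximate $f$ by characters of $\mathbb{T}^2$, peel off the degree part of $\psi$ to produce a quadratic polynomial phase, absorb that phase into the affine-linear factor, and then invoke Theorem~\ref{0707theorem1} once a rigidity statement for the residual Anzai skew product is in hand. Where you diverge from the paper is precisely at the step you yourself flag as the ``main obstacle'': the paper does not attempt to reprove rigidity for irrational $\alpha$ via Denjoy--Koksma; it simply cites \cite[Section 6.1]{Fav}, where de Faveri establishes that $(\mathbb{T}^2,T_{\alpha,\psi_1},\nu)$ is rigid for \emph{every} $T_{\alpha,\psi_1}$-invariant Borel probability measure $\nu$ on the whole torus when $\alpha$ is irrational and $\psi_1$ is the continuous $1$-periodic part of $\psi$. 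That citation short-circuits all the difficulties you correctly anticipate (Denjoy--Koksma gives only a uniform bound on $\sigma_{q_n}$, not smallness in $L^2(\nu)$; the claim must hold for measures with non-trivial conditional fibre measures; the $c'n$ drift when the cocycle is not mean-normalised). Consequently, the orbit-closure restriction to $X_2$ and the mean-zero normalisation $\phi=\widehat\psi-dx-c$ you introduce are unnecessary in the paper's treatment: the paper keeps $\psi_1$ with arbitrary mean, works on all of $\mathbb{T}^2$, and relies on the cited rigidity.

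For the rational case the two approaches also differ. The paper handles $\alpha\in\mathbb{Q}$ by a direct elementary argument patterned on \cite[Proposition 3.1]{JP}, rather than by identifying the orbit closure as a rotation on a compact abelian group. Your description of the orbit closure as a finite union of circles cyclically permuted by $T_2$, with discrete spectrum and hence rigidity for all invariant measures, is correct and gives an alternative route for that case; it is arguably cleaner because it reduces the rational case to the same rigidity framework as the irrational case.

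In summary, the plan is right and all the ``routine'' bookkeeping (Fourier reduction, realisation of the quadratic phase by the affine skew shift on $\mathbb{T}^2$, verifying $X_1'$ is affine of zero entropy) is fine. The genuine gap is the irrational-$\alpha$ rigidity for arbitrary invariant $\nu$: your Denjoy--Koksma sketch stops where the real work begins, and the reference you suggest, \cite{Anzai2}, concerns ergodicity rather than rigidity for general invariant measures. To close the proof you either need to invoke the rigidity result of \cite[Section 6.1]{Fav} as the paper does, or carry out the disintegration-plus-equidistribution argument you outline in full; the latter amounts to reproving that lemma.
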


%

\textbf{Notation.}
We call $e(f(n))$ a polynomial phase when $f(x)\in \mathbb{R}[x]$ is a polynomial. We use $1_{S}$ to denote the indicator of a predicate $S$,
that is $1_{S}=1$ when $S$ is true and $1_{S}=0$ when $S$ is false.
We also denote $1_{A}(n)=1_{n\in A}$ for any subset $A$ of $\mathbb{N}$.
For any finite set $C$,
$|C|$ denotes the cardinality of $C$.

We use $\{x\}$ and $\lfloor x \rfloor$ to denote the fractional part and the integer part of a real number $x$,
respectively.
We use $\|x\|_{\mathbb{R}/\mathbb{Z}}$ to denote the distance between $x$ and the set $\mathbb{Z}$,
i.e., $\|x\|_{\mathbb{R}/\mathbb{Z}}= \min(\{x\},1-\{x\})$.
For ease of notation,
we sometimes drop the subscript and write simply $\|x\|$. For $y\in \mathbb{R}^{m}$, $|y|$ is the $l^{\infty}$-norm of $y$.
For integer $N\geq 1$, we use $[N]$ to denote the set $\{1,\ldots,N\}$.

For two arithmetic functions $f(n)$ and $g(n)$, $f=o(g)$ means $\lim_{n\rightarrow \infty}f(n)/g(n)=0$; Let $g(n)$ be a positive functions of $n$,
$f\ll g$ means that there is an absolute constant $c$ such that $|f|\leq c g$;
$f=h+O(g)$ means $f-h\ll g$.

\textbf{Acknowledgments.} I would like to thank Kaisa Matom\"{a}ki for valuable discussions about the coefficient $(\log N)^{o(1)}$ in Theorem \ref{the estimate of general multiplicative functions}. I am very grateful for JinXin Xue for helpful suggestions on the manuscript. This paper is supported by the fellowship of China Postdoctoral Science Foundation 2020M670273.
\section{A quantitative factorization theorem}
In this section, we shall deduce a quantitative factorization for certain polynomials in two parameters. It is a variation of \cite[Theorems 1.19 and 10.2]{GT2} or \cite[Theorem 3.6]{HW} that is adapted to our situation.
The difference of these three versions lies in the intervals the parameters belong to. In \cite{GT2}, all parameters are in long intervals. In \cite{HW}, for polynomials $p(n, h)$, the first parameter $n$ varies in a long interval $[1,N]$, while $h$ varies in a short interval $[1,H]$ in the sense that $H$ can grow arbitrarily slower than $N$. In our case, $n$ is also allowed to vary in a long interval, while $h$ lies in a short arithmetic progression $\{h\in[1,Hs]:n+h\equiv a(mod~s)\}$. Here, $H,s$ are arbitrary positive integers, which can grow arbitrarily slower than $N$. Since the parameter $s$ can be much larger than $H$, it is not enough for our purpose if we just apply \cite{HW}  to $h=ls+a$. Our factorization theorem is as follows.
\begin{theorem}\label{decomposition theorem}
Let $m\geq 1$, $d\geq 0$ be integers. Let $f:\mathbb{Z} \rightarrow \mathbb{R}^{m}$ be a polynomial of degree $d$ and $g(n,h) = f(n+h)$. Let $R>2$. Let $s\geq 1$ be an integer. Then for any $B\geq 1, N,H\in \mathbb{N}$ such that $N>(Hs)^{O(1)}$ and $H> R^{O(1)}$, there is an integer $W\in [R, R^{O_{d}(B^{m})}]$, a connected subgroup $G'$ of $\mathbb{R}^{m}$, a set $\mathcal{N}\subseteq [N]$ with $|\mathcal{N}|\geq (1-W^{-B/2})N$, and a polynomial decomposition
\[g(n,h) =\mathcal{E}(n,h) + g'(n,h) + \gamma(n,h)\]
with $\mathcal{E}, g', \gamma: \mathbb{Z}^2\rightarrow \mathbb{R}^{m}$ satisfying

(\romannumeral1). $|\mathcal{E}(n,h+1) - \mathcal{E}(n,h)| \ll_d \frac{1}{sH/W}$ for $h\in [Hs]$.

(\romannumeral2). $g'(n,h)$ takes values in $G'$, and for any $n\in \mathcal{N}$, there are at least  $(1-W^{-B/2})s$ integers of $a\in[s]$ such that $\{g'(n,h)(mod~ \Gamma')\}_{h\in \mathcal{A}_{a}}$ is totally $W^{-B}$-equidistributed in $G'/\Gamma'$, where $\mathcal{A}_{a}=\{1\leq h\leq Hs:h\equiv a(mod~s)\}$ and $\Gamma'=G'\cap \mathbb{Z}^{m}$.

(\romannumeral3). There is an integer $q$ with $1\leq q \leq W$ such that $\{\gamma(n,h)(mod~\mathbb{Z}^{m})\}_{(n,h)\in \mathbb{Z}^2}$ is $qs$-periodic both in $n$ and $h$.
\end{theorem}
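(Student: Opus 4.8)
The plan is to follow the strategy of Green–Tao's quantitative factorization theorem \cite[Theorem 10.2]{GT2} and its short-interval adaptation in He–Wang \cite[Theorem 3.6]{HW}, but run the iterative dimension-reduction argument on the \emph{arithmetic progression} $\mathcal{A}_a$ rather than on a full short interval $[Hs]$. Since $f$ takes values in the abelian group $\mathbb{R}^m$, the relevant nilmanifold is just the torus $\mathbb{R}^m/\mathbb{Z}^m$, so no genuinely nilpotent algebra is needed — all Lie-bracket complications disappear, and only the equidistribution theory on tori is required. First I would set up the induction on $\dim G'$: start with $G' = \mathbb{R}^m$, $g'=g$, $\mathcal{E}=\gamma=0$, and $W=R$. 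At each stage, either $\{g'(n,h)\pmod{\Gamma'}\}_{h\in\mathcal{A}_a}$ is already totally $W^{-B}$-equidistributed in $G'/\Gamma'$ for most $n$ and most $a\in[s]$ — in which case we stop — or for a non-negligible set of $(n,a)$ it is not, and we invoke a quantitative equidistribution criterion on the torus to extract a nonzero integer frequency vector $k = k(n,a)$ of controlled size (bounded by a power of $W$) such that $k\cdot g'(n,h)$ is ``smooth'' (slowly varying) along $h\in\mathcal{A}_a$.

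The key step is then a \emph{pigeonholing across $n$ and $a$} to make this frequency vector essentially independent of $n$ and $a$. This is where the arithmetic-progression structure matters: because $h$ runs over $\{h\le Hs: h\equiv a\pmod s\}$, the polynomial $h\mapsto g'(n,h)=f(n+h)$ restricted to this progression is a polynomial in $l=(h-a)/s$ of degree $d$ with leading-order coefficients scaled by powers of $s$; after this rescaling the smoothness of $k\cdot g'$ along the progression forces the top coefficients of $k\cdot f$ to be close to rationals with denominator $\ll W^{O(1)}$ and, crucially, the lower-order coefficients to be $qs$-periodic in $n$ for some $q\le W$ — which is exactly what feeds clause (\romannumeral3). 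Once a single $k$ works for $\gg (1 - W^{-B/2})$ of the relevant $n$'s and $a$'s (after shrinking $\mathcal{N}$ and replacing $W$ by a bounded power of itself), I would split $k\cdot f(n+h)$ into its ``major arc'' part — a rational-plus-periodic piece, which I peel off into $\gamma$ — and a ``minor arc'' remainder of size $O(1/(sH/W))$ as $h$ increments by $1$, which goes into $\mathcal{E}$; the part of $g'$ orthogonal to $k$ then lives in the proper subtorus $k^\perp\cap G'$, dropping the dimension by one, and the induction continues. The bounds $W\in[R, R^{O_d(B^m)}]$ come from iterating at most $m$ times, each step costing a bounded power of $W$ in the various parameters.

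The main obstacle I anticipate is the bookkeeping of the \emph{two-parameter, $s$-dependent} smoothing estimate: one must show that the $\mathcal{E}$-term genuinely satisfies $|\mathcal{E}(n,h+1)-\mathcal{E}(n,h)|\ll_d 1/(sH/W)$ uniformly, and simultaneously that the periodicity in clause (\romannumeral3) is in $qs$ (not merely $q$) in \emph{both} variables. The delicate point is that, unlike in \cite{HW} where one may apply the short-interval result directly to the linear reparametrization $h = ls + a$, here $s$ can vastly exceed $H$, so the ``effective length'' of the progression is only $H$ and the equidistribution/smoothness gains must be measured against $H$, while the \emph{step} of the progression contributes the extra factor $s$ to periodicities and to the denominators of the rational approximations. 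Tracking which powers of $s$ attach to which coefficient — and verifying that the equidistribution criterion still yields a frequency of size $\ll W^{O(1)}$ when applied to a polynomial progression of common difference $s$ — is the technical heart of the proof. Everything else is a routine, if lengthy, transcription of the torus case of \cite[Section 10]{GT2} and \cite[Section 3]{HW}, with the vanishing of commutator terms simplifying the algebra considerably.
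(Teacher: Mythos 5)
Your plan is essentially the paper's own proof: an induction on $\dim G'$ driven by a Weyl-type equidistribution criterion for polynomials restricted to the progressions $\mathcal{A}_a$ (the paper's Proposition \ref{0910proposition} and Corollary \ref{two dimensional weyl's rule}), a pigeonholing over $(n,a)$ to fix a single frequency $Q$, and a peeling of $k\cdot f$ into a nearby rational piece with denominator dividing $qs$ (giving the $qs$-periodic $\gamma$) plus a slowly varying remainder $\mathcal{E}$, with the residual polynomial confined to $Q^{\perp}\cap G'$. You have also correctly isolated the technical heart, namely tracking the powers of $s$ attached to each coefficient when reparametrizing $h=ls+a$ and treating the resulting expression as a polynomial in $a$ as well as in $l$ and $n$, which is exactly how the paper obtains the bounds $\|s^iQ\cdot\beta_{j,k}\|\ll \widetilde{R}^{O(1)}N^{-j}H^{-i}s^{i-k}$.
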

Theorem \ref{decomposition theorem} is fundamental in the proof of Theorem \ref{the estimate of general multiplicative functions}. Before proving it, we need some preparations and show some intermediate results. We first recall some definitions.
\begin{definition}$($\cite[Definition 2.7]{GT2}$)$
Let $d\geq 0$ and $f:\mathbb{Z} \rightarrow \mathbb{R}/\mathbb{Z}$ be a polynomial of degree $d$. Write $f(n)=\sum_{i=0}^d \alpha_i \binom{n}{i}$ with each $\alpha_{i}\in \mathbb{R}/\mathbb{Z}$. For any integer $N\geq 1$, the
\textbf{smoothness norm} of $f$ is defined by
\[
\|f\|_{C^\infty[N]}:= \max_{1\leq i \leq d} N^i \|\alpha_i\|_{\mathbb{R}/\mathbb{Z}}.
\]
\end{definition}

\begin{definition}$($\cite[Definition 1.2]{GT2}$)$
Let $G/\Gamma$ be a nilmanifold endowed with the unique normalized Haar measure. Let $\delta>0$ and $\mathcal{A}$ a finite arithmetic progression in $\mathbb{Z}$. A finite sequence $\{x_{n}\}_{n\in \mathcal{A}}$ in $G/\Gamma$ is said to be \textbf{$\delta$-equidistributed} in $G/\Gamma$ if
\[\Big|\mathbb{E}_{n\in \mathcal{A}}F(x_{n})-\int_{G/\Gamma}F\Big|\leq \delta\|F\|_{Lip}\]
for all Lipschitz function $F: G/\Gamma\rightarrow \mathbb{C}$, where
\[\mathbb{E}_{n\in \mathcal{A}}F(x_{n})=\frac{1}{|\mathcal{A}|}\sum_{n\in \mathcal{A}}F(x_{n})\] and
\begin{equation}\label{def of Lip}
\|F\|_{Lip}:=\sup_{x\in G/\Gamma}|F(x)|+\sup_{\substack{x,y\in G/\Gamma\\x\neq y}}\frac{|F(x)-F(y)|}{d_{G/\Gamma}(x,y)}.
\end{equation}
A finite sequence $\{x_{n}\}_{n\in \mathcal{A}}$ in $G/\Gamma$ is said to be totally \textbf{$\delta$-equidistributed} in $G/\Gamma$ if the sequence $\{x_{n}\}_{n\in \mathcal{A}'}$ is $\delta$-equidistributed in $G/\Gamma$ for all arithmetic progressions $\mathcal{A}'\subseteq \mathcal{A}$ of length at least $\delta N$.
\end{definition}
The next result gives a simple connection between the coefficients of a polynomial and its smoothness norm.
\begin{lemma}\label{coefficients}$($\cite[Lemma 3.2]{GT}$)$ Suppose that $f:\mathbb{Z}\rightarrow \mathbb{R}/\mathbb{Z}$ is a polynomial of the form $\sum_{i=0}^d \beta_i n^i$. Then there is a positive integer $D=O_d(1)$ such that $\|D\beta_i\|_{\mathbb{R}/\mathbb{Z}}\ll_d N^{-i} \|f\|_{C^\infty[N]}$ for all
$i=1,\ldots,d$.
\end{lemma}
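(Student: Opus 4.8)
\textbf{Proof proposal for Lemma \ref{coefficients}.}
The plan is to pass from the falling-factorial (binomial) basis, in which $\|f\|_{C^\infty[N]}$ is defined, to the monomial basis by an explicit triangular change of coordinates whose denominators are controlled in terms of $d$. First I would recall that the representation $f(n)=\sum_{i=0}^{d}\alpha_i\binom{n}{i}$ with $\alpha_i\in\mathbb{R}/\mathbb{Z}$ is canonical, since $\alpha_i=(\Delta^{i}f)(0)$ is the $i$-th finite difference of $f$ at $0$ and hence is well defined in $\mathbb{R}/\mathbb{Z}$ independently of any representation of $f$. Expanding
\[
\binom{n}{j}=\frac{1}{j!}\,n(n-1)\cdots(n-j+1)=\sum_{i=0}^{j}\frac{s(j,i)}{j!}\,n^{i},
\]
where $s(j,i)\in\mathbb{Z}$ are the (signed) Stirling numbers of the first kind, and comparing the coefficients of $n^{i}$ on both sides of $f(n)=\sum_{j}\alpha_j\binom{n}{j}$, I obtain
\[
\beta_i=\sum_{j=i}^{d}\frac{s(j,i)}{j!}\,\alpha_j
\]
for any fixed real polynomial lift of $f$. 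Here one should note that two such lifts differ by an integer-valued polynomial of degree at most $d$, hence by an integer combination of the $\binom{n}{j}$'s, so $\beta_i$ is well defined modulo $\tfrac{1}{d!}\mathbb{Z}$ — which is exactly what makes the quantity $\|D\beta_i\|_{\mathbb{R}/\mathbb{Z}}$ meaningful once $D$ clears the denominator $d!$.

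Next I would set $D:=d!$ (any fixed multiple works). Since $j\le d$ forces $\tfrac{D}{j!}\in\mathbb{Z}$, each coefficient $\tfrac{D\,s(j,i)}{j!}$ is an integer, so $D\beta_i\bmod 1$ is well defined and
\[
\bigl\|D\beta_i\bigr\|_{\mathbb{R}/\mathbb{Z}}=\Bigl\|\sum_{j=i}^{d}\tfrac{D\,s(j,i)}{j!}\,\alpha_j\Bigr\|_{\mathbb{R}/\mathbb{Z}}\le\sum_{j=i}^{d}\Bigl|\tfrac{D\,s(j,i)}{j!}\Bigr|\,\|\alpha_j\|_{\mathbb{R}/\mathbb{Z}}.
\]
For every $j$ with $i\le j\le d$ the definition of the smoothness norm gives $\|\alpha_j\|_{\mathbb{R}/\mathbb{Z}}\le N^{-j}\|f\|_{C^\infty[N]}\le N^{-i}\|f\|_{C^\infty[N]}$, using $N\ge 1$ and $j\ge i$. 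Hence
\[
\bigl\|D\beta_i\bigr\|_{\mathbb{R}/\mathbb{Z}}\le\Bigl(\sum_{i\le j\le d}\bigl|D\,s(j,i)/j!\bigr|\Bigr)\,N^{-i}\|f\|_{C^\infty[N]}\ll_d N^{-i}\|f\|_{C^\infty[N]},
\]
since the bracketed sum is bounded solely in terms of $d$. This establishes the estimate for all $i=1,\dots,d$, with $D=d!=O_d(1)$.

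The only point that requires any care — and it is a minor bookkeeping matter rather than a genuine obstacle — is the observation that the monomial coefficients $\beta_i$ are not individually well defined in $\mathbb{R}/\mathbb{Z}$, only after multiplication by $d!$; once $D$ is chosen to absorb that denominator, the triangular change of basis and the trivial inequality $N^{-j}\le N^{-i}$ for $j\ge i$ finish the argument. (The constant term $i=0$ plays no role, consistent with the statement being only about $i\ge 1$.)
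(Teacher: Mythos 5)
Your proof is correct. The paper does not reproduce a proof for this lemma but simply cites it from Green--Tao, and your argument — expressing $\beta_i$ in terms of the $\alpha_j$ via the Stirling-number change of basis, clearing the denominators $j!$ with $D=d!$ so the transition coefficients become integers of size $O_d(1)$, and then applying the triangle inequality together with $\|\alpha_j\|_{\mathbb{R}/\mathbb{Z}}\le N^{-j}\|f\|_{C^\infty[N]}\le N^{-i}\|f\|_{C^\infty[N]}$ for $j\ge i\ge 1$ — is exactly the standard argument one finds in the source; the remark about well-definedness of $\beta_i$ only modulo $\tfrac{1}{d!}\mathbb{Z}$ is a correct and worthwhile clarification.
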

The following is a ``strong recurrence'' result for polynomials $f:\mathbb{Z}\rightarrow \mathbb{R}$.
\begin{lemma}\label{CNnorm}$($\cite[Lemma 4.5]{GT2}$)$
Suppose that $f: \mathbb{Z}\rightarrow \mathbb{R}$ is a polynomial of degree $d$ with $d\geq 0$. Suppose that $N\geq 1$, $\delta \in(0,1/2)$ and $\epsilon \in(0,\delta/2]$. If $f(n)(mod~ \mathbb{Z})$ belongs to an interval $I\subseteq \mathbb{R}/\mathbb{Z}$ of length $\epsilon$ for at least $\delta N$ integers of $n\in[N]$. Then there is a $D\in \mathbb{Z}$ with $0<|D|\ll_{d} \delta^{-O_{d}(1)}$ such that
$\|Df(mod~\mathbb{Z})\|_{C^\infty[N]} \ll_d \epsilon \delta^{-O_d(1)}$.
\end{lemma}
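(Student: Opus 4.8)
The plan is to argue by induction on the degree $d$. If $d=0$ then $\|Df\bmod\mathbb Z\|_{C^\infty[N]}$ is an empty maximum, hence $0$, and one takes $D=1$; so assume $d\geq 1$. We may also assume the asserted bound is nontrivial, i.e. $\epsilon\delta^{-C}<\tfrac12$ for a large constant $C=C(d)$ to be fixed only at the very end (otherwise the conclusion is automatic, since every $N^i\|\alpha_i\|_{\mathbb R/\mathbb Z}\leq\tfrac12$); in particular $\epsilon$ will be extremely small compared with $\delta$, which is exactly what legitimises the peeling-off step below. The three moves are: (a) one differencing, to drop the degree to $d-1$; (b) the inductive hypothesis together with a Vinogradov-type recurrence lemma, to control the leading coefficient of $f$ by a bounded denominator; (c) subtracting the resulting leading monomial and re-applying the inductive hypothesis to the degree-$(d-1)$ remainder.

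For (a), set $A:=\{n\in[N]:f(n)\bmod\mathbb Z\in I\}$, so $|A|\geq\delta N$, and for $h\in\mathbb Z$ put $A_h:=A\cap(A-h)$. Since $\sum_{|h|\leq N}|A_h|=|A|^2\geq\delta^2N^2$, a popularity (Markov) argument, together with the symmetry $|A_h|=|A_{-h}|$, produces a set $H_1\subseteq[N]$ with $|H_1|\gg\delta^2N$ such that $|A_h|\geq\tfrac14\delta^2N$ for each $h\in H_1$. Fixing $h\in H_1$, the polynomial $\Delta_hf(n):=f(n+h)-f(n)$ has degree $d-1$ with leading (monomial) coefficient $d\alpha_dh$, $\alpha_d$ being the leading coefficient of $f$, and $\Delta_hf(n)\bmod\mathbb Z$ lies in the length-$2\epsilon$ interval $I-I$ about $0$ for every $n\in A_h$.

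For (b), apply Lemma~\ref{CNnorm} inductively to $\Delta_hf$ (degree $d-1$, proportion $\tfrac14\delta^2$, interval length $2\epsilon$): this gives an integer $D_h$, $0<|D_h|\ll_d\delta^{-O_d(1)}$, with $\|D_h\Delta_hf\bmod\mathbb Z\|_{C^\infty[N]}\ll_d\epsilon\delta^{-O_d(1)}$, and then Lemma~\ref{coefficients} gives $D'=O_d(1)$ with $\|q_h\alpha_dh\|_{\mathbb R/\mathbb Z}\ll_d\epsilon\delta^{-O_d(1)}N^{-(d-1)}$, where $q_h:=dD'D_h$. Since $q_h$ assumes only $\ll_d\delta^{-O_d(1)}$ values, pigeonholing over $h\in H_1$ yields one $q^\ast$, $1\leq q^\ast\ll_d\delta^{-O_d(1)}$, and a set of $\gg_d\delta^{O_d(1)}N$ values of $h\in[N]$ with $\|q^\ast\alpha_dh\|_{\mathbb R/\mathbb Z}\ll_d\epsilon\delta^{-O_d(1)}N^{-(d-1)}$. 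A standard quantitative Vinogradov-type recurrence lemma --- many $h\in[N]$ with $\|\beta h\|_{\mathbb R/\mathbb Z}$ small force a bounded $q$ with $\|q\beta\|_{\mathbb R/\mathbb Z}$ small, with the crucial extra saving of a factor $N^{-1}$, of the kind used in \cite{GT2} --- then delivers $D_1$, $0<|D_1|\ll_d\delta^{-O_d(1)}$, with $\|D_1\alpha_d\|_{\mathbb R/\mathbb Z}\ll_d\epsilon\delta^{-O_d(1)}N^{-d}$.

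For (c), put $g(n):=D_1f(n)-D_1\alpha_dn^d$, of degree $\leq d-1$. Because $\epsilon$ is tiny relative to $\delta$, the arc $D_1I\bmod\mathbb Z$ has length $\leq|D_1|\epsilon\ll_d\epsilon\delta^{-O_d(1)}<1$, while $D_1\alpha_dn^d\bmod\mathbb Z$ stays within $N^d\|D_1\alpha_d\|_{\mathbb R/\mathbb Z}\ll_d\epsilon\delta^{-O_d(1)}$ of $0$ on $[N]$; hence $g(n)\bmod\mathbb Z$ lies in an interval of length $\ll_d\epsilon\delta^{-O_d(1)}$ for the $\geq\delta N$ values $n\in A$. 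Applying Lemma~\ref{CNnorm} inductively once more (degree $\leq d-1$, proportion $\delta$, interval length $\ll_d\epsilon\delta^{-O_d(1)}$) gives $D_2$, $0<|D_2|\ll_d\delta^{-O_d(1)}$, with $\|D_2g\bmod\mathbb Z\|_{C^\infty[N]}\ll_d\epsilon\delta^{-O_d(1)}$. Taking $D:=D_1D_2$, the difference $Df-D_2g$ is the single monomial $D_2D_1\alpha_dn^d$, of smoothness norm $\ll_d N^d\|D_2D_1\alpha_d\|_{\mathbb R/\mathbb Z}\ll_d\epsilon\delta^{-O_d(1)}$ by the bound on $\|D_1\alpha_d\|_{\mathbb R/\mathbb Z}$; subadditivity of $\|\cdot\|_{C^\infty[N]}$ then gives $\|Df\bmod\mathbb Z\|_{C^\infty[N]}\ll_d\epsilon\delta^{-O_d(1)}$, with $0<|D|\ll_d\delta^{-O_d(1)}$, closing the induction. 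It remains to note that the exponents of $\delta$ produced at (a)--(c) are bounded functions of $d$, so one can fix $C=C(d)$ in the nontriviality reduction large enough to dominate them and to ensure each use of the inductive hypothesis satisfies its own constraint $\epsilon'\leq\delta'/2$. I expect the main obstacle to be precisely the Vinogradov lemma of step (b) --- extracting the sharp denominator bound with the factor $N^{-d}$ rather than only $N^{-(d-1)}$ that a crude two-point pigeonhole would give --- together with keeping the accumulation of powers of $\delta$ under control across the bounded number of inductive steps.
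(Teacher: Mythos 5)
This lemma is quoted by the paper directly from \cite[Lemma 4.5]{GT2} with no proof supplied, so the only meaningful comparison is with Green--Tao's original argument, which your proposal reconstructs faithfully: the same induction on the degree via a single differencing, the same combination of the coefficient lemma (Lemma \ref{coefficients}) with a quantitative Vinogradov recurrence lemma to pin down the leading coefficient with the crucial extra saving of $N^{-1}$, and the same subtract-and-reapply step. Your write-up is sound, with the understood caveats that the Vinogradov-type recurrence is left as a black box (it is \cite[Lemma 3.2]{GT2}) and that the popularity count in step (a) tacitly requires $\delta^{2}N\gg_d 1$ to make $H_1$ nonempty --- a degenerate regime that the original treatment also glosses over and which is immaterial for the applications in this paper.
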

The next one is a special case of the quantitative equidistribution result on polynomials $f:\mathbb{Z}\rightarrow \mathbb{R}^{m}$ (\cite[Theorem 2.9]{GT2} and \cite[Corollary 3.2]{HW}).
\begin{lemma}\label{Weyl rule}
Let $m\geq 1$, $d\geq 0$ and $N\geq 1$ be integers. Suppose that $f: \mathbb{Z}\rightarrow \mathbb{R}^{m}$ is a polynomial of degree $d$. Let $R>2$. If $\{f(n)(mod~\mathbb{Z}^{m})\}_{n\in [N]}$ is not totally $R^{-1}$-equidistributed in $\mathbb{R}^{m}/\mathbb{Z}^{m}$, then there is a $D\in \mathbb{Z}^{m}$ with $0<|D|\leq R^{O_{m,d}(1)}$ such that $\|D \cdot f(mod~\mathbb{Z})\|_{C^\infty[N]}\leq R^{O_{m,d}(1)}$.
\end{lemma}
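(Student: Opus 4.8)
The plan is to read off the statement as a specialization of the quantitative Leibman theorem of Green and Tao, \cite[Theorem 2.9]{GT2} (equivalently \cite[Corollary 3.2]{HW}), to the flat torus $\mathbb{R}^{m}/\mathbb{Z}^{m}$ viewed as an abelian nilmanifold. Concretely, I would take $G=\mathbb{R}^{m}$ with the filtration $G_{\bullet}$ given by $G_{i}=\mathbb{R}^{m}$ for $0\leq i\leq d$ and $G_{d+1}=\{0\}$, and $\Gamma=\mathbb{Z}^{m}$; the standard coordinate basis is a Mal'cev basis, and the metric it induces on $G/\Gamma$ is bi-Lipschitz equivalent, with constants depending only on $m$, to the usual quotient metric on $\mathbb{R}^{m}/\mathbb{Z}^{m}$. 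Hence the notion of (total) $\delta$-equidistribution used in the Definition above agrees, after adjusting $\delta$ by a multiplicative $O_{m}(1)$ factor, with that of \cite{GT2}. Since $f$ has degree $d$, the sequence $n\mapsto f(n)\bmod\mathbb{Z}^{m}$ is a polynomial sequence adapted to $G_{\bullet}$.

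With this dictionary in place, the argument is essentially bookkeeping. First I would invoke \cite[Theorem 2.9]{GT2}: if $\{f(n)\bmod\mathbb{Z}^{m}\}_{n\in[N]}$ fails to be totally $\delta$-equidistributed, there is a nontrivial horizontal character $\eta$ with $0<|\eta|\ll_{m,d}\delta^{-O_{m,d}(1)}$ and $\|\eta\circ f\|_{C^{\infty}[N]}\ll_{m,d}\delta^{-O_{m,d}(1)}$. Second, I would unwind what a horizontal character is on the torus: $G/\Gamma$ is its own horizontal torus, and its characters are exactly the maps $x\mapsto k\cdot x\bmod\mathbb{Z}$ for $k\in\mathbb{Z}^{m}$, with modulus $|\eta|=|k|$; thus $\eta\circ f=D\cdot f\bmod\mathbb{Z}$ with $D:=k\in\mathbb{Z}^{m}\setminus\{0\}$, which is precisely the object appearing in the smoothness norm in the lemma. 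Taking $\delta$ to be a fixed constant multiple of $R^{-1}$, the hypothesis that the sequence is not totally $R^{-1}$-equidistributed (in the sense of the Definition) implies it is not totally $\delta$-equidistributed in the sense of \cite{GT2}, and the conclusion follows with $0<|D|\leq R^{O_{m,d}(1)}$ and $\|D\cdot f(\bmod\,\mathbb{Z})\|_{C^{\infty}[N]}\leq R^{O_{m,d}(1)}$.

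There is no genuinely hard step here; the whole content sits in \cite{GT2}, and the only care needed is in matching conventions (the filtration, the Mal'cev metric on $\mathbb{T}^{m}$, and tracking that the $\delta$-adjustments stay polynomial in $R$). If a self-contained route is preferred, one can run the classical Weyl argument directly: failure of total $R^{-1}$-equidistribution produces an arithmetic progression $\mathcal{A}\subseteq[N]$ of length $\geq R^{-1}N$ and a Lipschitz $F:\mathbb{T}^{m}\to\mathbb{C}$ with $\|F\|_{Lip}\leq1$ and $|\mathbb{E}_{n\in\mathcal{A}}F(f(n))-\int_{\mathbb{T}^{m}}F|>R^{-1}$; Fourier-expanding $F$ and truncating frequencies at height $\asymp R^{O_{m}(1)}$ (the Lipschitz bound controls the tail) isolates a single $k\in\mathbb{Z}^{m}$ with $0<|k|\ll R^{O_{m}(1)}$ and $|\mathbb{E}_{n\in\mathcal{A}}e(k\cdot f(n))|\gg R^{-O_{m}(1)}$; reparametrising $\mathcal{A}$ and applying Weyl's inequality, or equivalently iterating Lemma \ref{CNnorm} on the degree-$\leq d$ real polynomial $k\cdot f$, yields an integer $1\leq q\ll_{d}R^{O_{d}(1)}$ with $\|q(k\cdot f)\bmod\mathbb{Z}\|$ small along $\mathcal{A}$; and comparing the smoothness norm along $\mathcal{A}$ with that on $[N]$ (they differ by at most a factor $R^{O_{d}(1)}$ as $|\mathcal{A}|\geq R^{-1}N$) gives the claim with $D=qk$. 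In this variant the only mildly delicate points are the Fourier truncation and the passage from the sub-progression back to $[N]$, both routine.
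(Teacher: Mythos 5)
Your proposal matches the paper exactly: the paper gives no proof of this lemma, presenting it as "a special case of the quantitative equidistribution result on polynomials" with citations to \cite[Theorem 2.9]{GT2} and \cite[Corollary 3.2]{HW}, and your first paragraph is precisely the specialization (abelian filtration on $\mathbb{R}^{m}/\mathbb{Z}^{m}$, horizontal characters as $x\mapsto k\cdot x$) that the author intends the reader to supply. The bookkeeping you describe, and the optional self-contained Weyl-inequality route, are both sound.
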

Now, we prove a variant version of the above result for the equidistribution in arithmetic progressions that is applicable to our case. In the following, constants in the asymptotic notation $\ll $ and $O(\cdot)$ are allowed to depend on $m$ and $d$. These constants are not necessarily the same in each occurrence.

\begin{proposition}\label{0910proposition}
Let $m\geq 1$, $d\geq 0$, and let $g(n,h): \mathbb{Z}^2\rightarrow \mathbb{R}^{m}$ be a polynomial. Suppose that \[g(n,h)=\sum_{\substack{j,k\geq 0\\j+k\leq d}}\beta_{j,k}n^{j}h^{k}\] with each $\beta_{j,k}\in \mathbb{R}^{m}$. Let $\widetilde{R}>2$ and $N,H\in \mathbb{N}$ with $N,H\geq \widetilde{R}^{O(1)}$. Let $a,s$ be integers with $s\geq 1$ and $1\leq a\leq s$. Write $\mathcal{A}_{a}=\{h:1\leq h\leq Hs,h\equiv a(mod~s)\}$. Then at least one of the following holds:

(\romannumeral1). $\{g(n,h)(mod~\mathbb{Z}^{m})\}_{h\in \mathcal{A}_{a}}$ is totally $\widetilde{R}^{-1}$-equidistributed in $\mathbb{R}^{m}/\mathbb{Z}^{m}$ for all but at most $\widetilde{R}^{-1}Ns$ values of $(n,a)\in [N]\times [s]$.

(\romannumeral2). there is a $Q\in \mathbb{Z}^{m}$ with $0<|Q|\ll \widetilde{R}^{O(1)}$ such that for any $i=1,\ldots,d$ and $j=0,\ldots,d$ with $i+j\leq d$ and for any $k$ with $i\leq k\leq d-j$,
$
\|s^i Q\cdot \beta_{j,k}\|_{\mathbb{R}/\mathbb{Z}}\ll \widetilde{R}^{O(1)}N^{-j}H^{-i}s^{i-k}.
$
\end{proposition}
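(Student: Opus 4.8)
The plan is to reduce Proposition \ref{0910proposition} to the single-parameter equidistribution result, Lemma \ref{Weyl rule}, applied to the polynomial $h\mapsto g(n,h)$ restricted to the arithmetic progression $\mathcal{A}_a$. First I would reparametrize: writing $h=ls+a$ with $l$ ranging over $[H]$ (more precisely $1\le l\le H$, up to an $O(1)$ adjustment at the endpoints), the sequence $\{g(n,h)\bmod\mathbb{Z}^m\}_{h\in\mathcal{A}_a}$ becomes $\{\widetilde g_{n,a}(l)\bmod\mathbb{Z}^m\}_{l\in[H]}$ where $\widetilde g_{n,a}(l):=g(n,ls+a)$ is a polynomial in $l$ of degree $\le d$. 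Now suppose (\romannumeral1) fails, i.e. for more than $\widetilde R^{-1}Ns$ pairs $(n,a)$ the sequence $\{\widetilde g_{n,a}(l)\}_{l\in[H]}$ is \emph{not} totally $\widetilde R^{-1}$-equidistributed in $\mathbb{R}^m/\mathbb{Z}^m$. For each such pair, Lemma \ref{Weyl rule} (with $N$ there taken to be $H$) produces a nonzero $D=D_{n,a}\in\mathbb{Z}^m$ with $|D_{n,a}|\le \widetilde R^{O(1)}$ such that $\|D_{n,a}\cdot\widetilde g_{n,a}\bmod\mathbb{Z}\|_{C^\infty[H]}\le\widetilde R^{O(1)}$.

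The next step is a pigeonhole over the bounded set of possible $D$'s: since there are only $\ll\widetilde R^{O(1)}$ lattice vectors of norm $\le\widetilde R^{O(1)}$, one fixed $Q\in\mathbb{Z}^m$, $0<|Q|\ll\widetilde R^{O(1)}$, works for a set of pairs $(n,a)$ of size $\gg\widetilde R^{-O(1)}Ns$. For these pairs, $\|Q\cdot\widetilde g_{n,a}\bmod\mathbb{Z}\|_{C^\infty[H]}\le\widetilde R^{O(1)}$. I would then expand the top-degree-in-$l$ coefficients of the polynomial $l\mapsto Q\cdot g(n,ls+a)$ in terms of the $Q\cdot\beta_{j,k}$: the coefficient of $l^i$ in $Q\cdot g(n,ls+a)$ is $\sum_{k\ge i}\binom{k}{i}s^i(Q\cdot\beta_{j,k})$ summed appropriately over $j$, with $n^j$ and $a^{k-i}$ weights; controlling the $C^\infty[H]$-norm gives, for each $i\ge1$, that $\|H^i s^i\,(\text{that coefficient combination})\|_{\mathbb{R}/\mathbb{Z}}\ll\widetilde R^{O(1)}$, now for many $(n,a)$. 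Treating this as a statement about the polynomial in the two remaining variables $n$ (over $[N]$) and $a$ (over $[s]$) lying in a short interval of $\mathbb{R}/\mathbb{Z}$ for a positive-proportion set of $(n,a)$, I would apply Lemma \ref{CNnorm} (or its two-variable iteration, first in $n$ then in $a$, absorbing the auxiliary integer multipliers $D$ into $Q$) to extract smoothness in $n$ and $a$; peeling off the leading coefficients in $a$ then isolates the individual $s^iQ\cdot\beta_{j,k}$ and produces the stated bound $\|s^iQ\cdot\beta_{j,k}\|_{\mathbb{R}/\mathbb{Z}}\ll\widetilde R^{O(1)}N^{-j}H^{-i}s^{i-k}$, with the $s^{i-k}$ factor coming from the binomial rescaling $h=ls+a$.

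The main obstacle I anticipate is bookkeeping the \emph{joint} extraction of smoothness in all three variables $l$ (equivalently $h$), $n$, and $a$ while keeping the multiplier $Q$ of controlled size and keeping track of how the scaling $h=ls+a$ redistributes powers of $s$ among the coefficients. In particular, Lemma \ref{CNnorm} and Lemma \ref{coefficients} each introduce a new bounded integer multiplier, so one must merge successive multipliers into a single $Q$ with $|Q|\ll\widetilde R^{O(1)}$; this is routine but needs care so the final $Q$ does not depend on $(n,a)$. A second, more delicate point is the \emph{uniformity} in $a$: Lemma \ref{Weyl rule} is applied separately for each $(n,a)$, and it is the pigeonhole step together with a counting argument (a nonzero polynomial in $(n,a)$ cannot be small on too large a subset of $[N]\times[s]$ unless its coefficients are small) that converts the per-pair conclusions into the uniform coefficient bound in (\romannumeral2). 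Getting the exponents of $N$, $H$, $s$ exactly right in the final inequality — especially the mixed $s^{i-k}$ term — is where I would be most careful.
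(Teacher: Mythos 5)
Your proposal is correct and follows essentially the same route as the paper's proof: substitute $h=ls+a$, apply Lemma \ref{Weyl rule} to the resulting degree-$d$ polynomial in $l$ for each bad pair $(n,a)$, pigeonhole to fix a common frequency $D$, then peel off coefficients with Lemma \ref{coefficients} and iterate Lemma \ref{CNnorm} first in $n$ and then in $a$, merging the bounded auxiliary multipliers $D_1,D_2,D_3$ into the final $Q$. The only minor imprecisions are cosmetic: the bound from the smoothness norm is best stated via Lemma \ref{coefficients} (which supplies the extra $O(1)$ multiplier) rather than by writing $\|H^i s^i(\cdots)\|_{\mathbb{R}/\mathbb{Z}}\ll\widetilde R^{O(1)}$, and the $s^{i-k}$ on the right-hand side of (\romannumeral2) arises from the $a$-extraction step (the coefficient of $a^{k-i}$ loses a factor $s^{k-i}$ via $C^\infty[s]$), while the $s^i$ on the left comes from the rescaling $h=ls+a$; these are exactly the bookkeeping points you flagged, and the paper resolves them the way you anticipate.
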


\begin{proof}
Suppose that
\begin{equation}\label{0910formula1}
g(n,ls+a)=\sum_{\substack{i,j\geq 0\\i+j\leq d}}w_{i,j}l^in^{j},
\end{equation}
where
\begin{equation}\label{0910formula2}
w_{i,j}=s^i\sum_{k=i}^{d-j}a^{k-i}\binom{k}{i}\beta_{j,k}.
\end{equation}
Assume claim $(\romannumeral1)$ does not hold.
Denote by $\mathcal{A}$ the set of all $(n,a)$ such that $\{g(n,h)(mod~\mathbb{Z}^{m})\}_{h\in \mathcal{A}_{a}}$ is not totally $\widetilde{R}^{-1}$-equidistributed in $\mathbb{R}^{m}/\mathbb{Z}^{m}$. Then $|\mathcal{A}|\geq \widetilde{R}^{-1}Ns$.  For each  $(n,a)\in \mathcal{A}$, by Lemma \ref{Weyl rule}, there is a $D\in \mathbb{Z}^{m}$ with $0<|D|\leq \widetilde{R}^{O(1)}$ such that
\be \label{eq10.24a}
\|D\cdot g(n,\cdot s+a)(mod~\mathbb{Z})\|_{C^\infty[H]}
\ll \widetilde{R}^{O(1)}.
\ee
By the pigeonhole principle, there is a common $D$ independent of $(n,a)$ with $0<|D| \leq \widetilde{R}^{O(1)}$ such that formula (\ref{eq10.24a}) holds for at least $\widetilde{R}^{-O(1)}Ns$ values of $(n,a)\in \mathcal{A}\subseteq [N]\times[s]$. We still denote the set of such $(n,a)$ by $\mathcal{A}$. By Lemma \ref{coefficients}, there is an integer $D_{1}=O(1)$ such that for any $i=1,\ldots,d$ and $(n,a)\in \mathcal{A}$,
\begin{equation}\label{0910formula3}
\|D_{1}\sum_{j=0}^{d-i}D\cdot w_{i,j}n^{j}\|_{\mathbb{R}/\mathbb{Z}}\ll \widetilde{R}^{O(1)}H^{-i}.
\end{equation}
By enlarging the constant implied in the constant $O(1)$ in the estimate of $|\mathcal{A}|\geq \widetilde{R}^{-O(1)}Ns$, there are at least $\widetilde{R}^{-O(1)}s$ choices of $a\in [s]$ such that for each $a$, there are at least $\widetilde{R}^{-O(1)}N$ choices of $n\in [N]$ with $(n,a)\in \mathcal{A}$. Denote the set of such $a$ by $\mathcal{B}$. Then $|\mathcal{B}|\geq \widetilde{R}^{-O(1)}s$. Treat $D_{1}\sum_{j=0}^{d-i}D\cdot w_{i,j}n^{j}$ as the polynomial of $n$. Then by Lemmas \ref{CNnorm} and \ref{Weyl rule} again, for each $a\in \mathcal{B}$, there is an integer $D_{2}$ with $0<|D_{2}|\ll \widetilde{R}^{O(1)}$, such that $\|D_{2}D_{1}D\cdot w_{i,j}\|_{\mathbb{R}/\mathbb{Z}}\ll \widetilde{R}^{O(1)}H^{-i}N^{-j}$ for $i=1,\ldots,d$ and $j=1,\ldots, d$ with $i+j\leq d$. We may assume that $D_{2}$ is independent of $a$, since this can be obtained after pigeonholding in the $\widetilde{R}^{O(1)}$ possible choices of $D_{2}$ and enlarging the constant $O(1)$ in the estimate of $|\mathcal{B}|$. This leads to
\begin{equation}\label{928}
\|D_{2}D_{1}\sum_{j=1}^{d-i}D\cdot w_{i,j}n^{j}\|_{\mathbb{R}/\mathbb{Z}}\ll \widetilde{R}^{O(1)}H^{-i}
\end{equation}
for $i=1,\ldots,d$. By (\ref{0910formula3}), (\ref{928}) and the triangle inequality, we obtain the diophantine information about $w_{i,0}$. Summarize the above analysis, together with (\ref{0910formula2}), we have that
\begin{equation}\label{0910formula4}
\|D_{2}D_{1}s^i\sum_{k=i}^{d-j}a^{k-i}\binom{k}{i}D\cdot \beta_{j,k}\|_{\mathbb{R}/\mathbb{Z}}\ll \widetilde{R}^{O(1)}N^{-j}H^{-i}
\end{equation}
holds for any $a\in \mathcal{B}$, $i=1,\ldots,d$ and $j=0,\ldots,d$ with $i+j\leq d$. For fixed $(i,j)\in \{1,\ldots,d\}\times \{0,\ldots,d\}$, we treat $D_{2}D_{1}s^i\sum_{k=i}^{d-j}a^{k-i}\binom{k}{i}D\cdot \beta_{j,k}$ as a polynomial of $a$. By an argument similar to that of (\ref{0910formula4}), there is an integer $D_{3}$ with $0<|D_{3}|\ll \widetilde{R}^{O(1)}$ such that for any $i=1,\ldots,d$ and $j=0,\ldots,d$ with $i+j\leq d$ and for any $i\leq k\leq d-j$,
\[
\|s^i D_{3}D_{2}D_{1}D\cdot \beta_{j,k}\|_{\mathbb{R}/\mathbb{Z}}\ll \widetilde{R}^{O(1)}N^{-j}H^{-i}s^{i-k}.
\]
Let $Q=D_{3}D_{2}D_{1}D$. We obtain claim (ii).
\end{proof}
\begin{corollary}\label{two dimensional weyl's rule}
Let $m\geq 1$, $d\geq 0$, and let $f(n)=\sum_{i=0}^{d}\alpha_{i}n^{i}$ with each $\alpha_{i}\in \mathbb{R}^{m}$. Suppose that $g(n,h)=f(n+h)$. Let $\widetilde{R}>2$ and $N,H\in \mathbb{N}$ with $N,H\geq \widetilde{R}^{O(1)}$. Let integers $s\geq 1$ and $1\leq a\leq s$. Write $\mathcal{A}_{a}=\{h:1\leq h\leq Hs,h\equiv a(mod~s)\}$. Then at least one of the following holds:

(\romannumeral1). $\{g(n,h)(mod~\mathbb{Z}^{m})\}_{h\in \mathcal{A}_{a}}$ is totally $\widetilde{R}^{-1}$-equidistributed in $\mathbb{R}^{m}/\mathbb{Z}^{m}$ for all but at most $\widetilde{R}^{-1}Ns$ values of $(n,a)\in [N]\times [s]$.

(\romannumeral2). there is a $Q\in \mathbb{Z}^{m}$ with $0<|Q|\ll \widetilde{R}^{O(1)}$ such that $\|sQ\cdot \alpha_l\|_{\mathbb{R}/\mathbb{Z}}
\leq \widetilde{R}^{O(1)} H^{-1}N^{-(l-1)}$ for $l=1,\ldots,d$.
\end{corollary}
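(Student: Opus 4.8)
The plan is to derive this corollary as a direct specialization of Proposition~\ref{0910proposition} to the two-variable polynomial $g(n,h)=f(n+h)$; the only real work is to translate alternative (ii) of the proposition, which is phrased in terms of the coefficients $\beta_{j,k}$, into a statement about the coefficients $\alpha_l$ of $f$.

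First I would expand $g$ in the monomial basis. Since
\[
g(n,h)=f(n+h)=\sum_{i=0}^{d}\alpha_i(n+h)^i=\sum_{\substack{j,k\ge 0\\ j+k\le d}}\binom{j+k}{j}\alpha_{j+k}\,n^jh^k,
\]
the coefficients appearing in Proposition~\ref{0910proposition} are $\beta_{j,k}=\binom{j+k}{j}\alpha_{j+k}$ for $j,k\ge 0$ with $j+k\le d$. Applying that proposition with the given $\widetilde R,N,H,s,a$: if its alternative (i) holds, then we are immediately in case (i) of the corollary; so assume instead that its alternative (ii) holds, producing $Q\in\mathbb{Z}^m$ with $0<|Q|\ll\widetilde R^{O(1)}$ such that
\[
\|s^iQ\cdot\beta_{j,k}\|_{\mathbb{R}/\mathbb{Z}}\ll\widetilde R^{O(1)}N^{-j}H^{-i}s^{i-k}
\]
for all admissible triples $i,j,k$ (i.e.\ $1\le i\le d$, $0\le j$, $i+j\le d$, $i\le k\le d-j$).

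Next, for each $l\in\{1,\dots,d\}$ I would plug in the triple $i=1$, $j=l-1$, $k=1$: this is admissible, since $i+j=l\le d$, $i\le k$, and $k=1\le d-j=d-l+1$. Because $\beta_{l-1,1}=\binom{l}{l-1}\alpha_l=l\,\alpha_l$ and $s^{i-k}=s^{0}=1$, this yields $\|s\,(lQ)\cdot\alpha_l\|_{\mathbb{R}/\mathbb{Z}}\ll\widetilde R^{O(1)}N^{-(l-1)}H^{-1}$. To replace the $l$-dependent vector $lQ$ by a single vector good for all $l$, set $\widetilde Q:=\operatorname{lcm}(1,\dots,d)\cdot Q$; then $0<|\widetilde Q|\ll\widetilde R^{O(1)}$ since the extra factor is $O_d(1)$, and using $l\mid\operatorname{lcm}(1,\dots,d)$ together with the elementary bound $\|kx\|_{\mathbb{R}/\mathbb{Z}}\le k\|x\|_{\mathbb{R}/\mathbb{Z}}$ for positive integers $k$, one obtains $\|s\widetilde Q\cdot\alpha_l\|_{\mathbb{R}/\mathbb{Z}}\ll\widetilde R^{O(1)}N^{-(l-1)}H^{-1}$ for every $l=1,\dots,d$. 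This is exactly alternative (ii) of the corollary (with $\widetilde Q$ in the role of $Q$).

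I do not expect a genuine obstacle here: the argument is essentially mechanical bookkeeping. The two points that require a little care are (a) choosing the index triple $j=l-1$, $k=i=1$ — this is precisely what turns the weight $N^{-j}H^{-i}s^{i-k}$ into $N^{-(l-1)}H^{-1}$ rather than into a weaker negative power of $s$; and (b) the harmless binomial factor $l$ in $\beta_{l-1,1}=l\,\alpha_l$, which is absorbed by multiplying $Q$ through by $\operatorname{lcm}(1,\dots,d)$ at the end.
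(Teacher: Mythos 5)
Your proof is correct and follows essentially the same route as the paper: expand $g(n,h)=f(n+h)$ to identify $\beta_{j,k}=\binom{j+k}{j}\alpha_{j+k}$, apply Proposition~\ref{0910proposition}, specialize to $i=k=1$, $j=l-1$, and absorb the binomial factor $l$ by multiplying $Q$ through by an $O_d(1)$ integer. The only cosmetic difference is that the paper takes $d!\,Q_1$ rather than $\operatorname{lcm}(1,\dots,d)\cdot Q$, both of which work for the same reason.
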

\begin{proof}
Suppose that \[g(n,h)=\sum_{\substack{j,k\geq 0\\j+k\leq d}}\beta_{j,k}n^{j}h^{k},\]
where $\beta_{j,k}\in \mathbb{R}^{m}$.
By the assumption, $\beta_{j,k}=\alpha_{j+k}\binom{j+k}{j}$. Assume that $(\romannumeral1)$ does not hold. By Proposition \ref{0910proposition}, there is a $Q_{1}\in \mathbb{Z}^{m}$ with $0<|Q_{1}|\ll\widetilde{R}^{O(1)}$ such that for any $i=1,\ldots,d$ and $j=0,\ldots,d$ with $i+j\leq d$ and for any $k$ with $i\leq k\leq d-j$,
$
\|s^i Q_{1}\cdot \alpha_{j+k}\binom{j+k}{j}\|_{\mathbb{R}/\mathbb{Z}}\ll \widetilde{R}^{O(1)}N^{-j}H^{-i}s^{i-k}.
$
Let $i=k=1$, $j=l-1$. Let $Q=d! Q_{1}$. We obtain claim (\romannumeral2).
\end{proof}
Now using Corollary \ref{two dimensional weyl's rule}, we are ready to prove Theorem \ref{decomposition theorem}.
\begin{proof}[Proof of Theorem \ref{decomposition theorem}]
Suppose that $f(n)=\sum_{i=0}^{d}\alpha_{i}n^{i}$ with each $\alpha_{i}\in \mathbb{R}^{m}$. Let $\widetilde{R} = R^B$. If $\{g(n,h)(mod~\mathbb{Z}^{m})\}_{h\in \mathcal{A}_{a}}$ is totally $\widetilde{R}^{-1}$-equidistributed in $\mathbb{R}^{m}/\mathbb{Z}^{m}$ for all but at most $\widetilde{R}^{-1}Ns$ values of $(n,a)\in [N]\times [s]$,
then choose $\mathcal{E}(n,h) = \gamma(n,h) = 0, g'(n,h) = g(n,h)$ and $G'=\mathbb{R}^{m}$, $W=R$. Otherwise, applying Corollary \ref{two dimensional weyl's rule} to $g(n,h)$, then there is a $Q\in \mathbb{Z}^{m}$ with $0<|Q| \ll \widetilde{R}^{O(1)}$ such that for $j=1,\ldots,d$,
\[
\|sQ\cdot \alpha_j\|_{\mathbb{R}/\mathbb{Z}}
\leq \widetilde{R}^{O(1)} H^{-1}N^{-(j-1)}.
\]
We may choose $\beta_{j}\in \mathbb{R}^{m}$ with $Q\cdot \beta_{j}\in \mathbb{Z}/s$ and $|\alpha_{j}-\beta_{j}|<\widetilde{R}^{O(1)} H^{-1}N^{-(j-1)}/s$. We further choose $\tau_{j}\in \frac{ \mathbb{Z}^{m}}{qs}$ for some integer $q$ with with $0<|q|<\widetilde{R}^{O(1)}$ and $Q\cdot \beta_{j}=Q\cdot \tau_{j}$.  Let $\mathcal{E}_1(n,h)=\sum_{j=1}^{d}(\alpha_j-\beta_{j}) (n+h)^j+\alpha_{0}$ and $\gamma_{1}(n,h)=\sum_{j=1}^{d}\tau_{j}(n+h)^{j}$. Define
\[g_{1}(n,h)=g(n,h)-\mathcal{E}_1(n,h)-\gamma_{1}(n,h).\]
Then $Q\cdot g_{1}(n,h)=0$.
So $g_{1}(n,h)$ takes values in $G_{1}$, a connected proper subgroup $G_{1}$ of $G$, which is isomorphic (after a natural projection of coordinates, denoted by $\pi_{1}$) to $\mathbb{R}^{m-1}$. Additionally, using $N>(Hs)^{O(1)}$, one can verify that $|\mathcal{E}_{1}(n,h+1)-\mathcal{E}_{1}(n,h)|\ll_{d}\frac{\widetilde{R}^{O(1)}}{Hs}$ for any $h\in [Hs]$. Consider the lattice $\Gamma_{1}:=G_{1}\cap \mathbb{Z}^{m}$. Then it is not hard to check that $\Gamma_{1}$ is isomorphic to $A_{1}\mathbb{Z}^{m-1}$, where $A_{1}\in  GL_{m-1}(\mathbb{R})$ with integer entries of height $\widetilde{R}^{O(1)}$. Let $R_{1}=\widetilde{R}\cdot\widetilde{R}^{O(1)}=R^{O(B)}$ and $\widetilde{R}_{1}=R_{1}^{B}$. We now check weather $\{g_{1}(n,h)(mod~\Gamma_{1})\}_{h\in \mathcal{A}_{a}}$ is totally $\widetilde{R_{1}}^{-1}$-equidistributed in $G_{1}/\Gamma_{1}$ for all but at most $\widetilde{R}_{1}^{-1}Ns$ values of $(n,a)\in [N]\times [s]$. If it is so, we terminate the process and then choose $\mathcal{E}(n,h) = \mathcal{E}_{1}(n,h)$, $\gamma(n,h) =\gamma_{1}(n,h), g'(n,h) = g_{1}(n,h)$ and $G'=G_{1}$, $W=R_{1}$.

If it is not the case, we continue the process. Write $f_{1}(n)=\sum_{i=1}^{d}\alpha_{i,1}n^{i}$ such that $g_{1}(n,h)=f_{1}(n+h)$. Then $A_{1}^{-1}\circ \pi_{1}(f_{1}(n))$, $A_{1}^{-1}\circ \pi_{1}(g_{1}(n,h))$ are polynomials taking values in $\mathbb{R}^{m-1}$. We then again apply Corollary \ref{two dimensional weyl's rule} to $A_{1}^{-1}\circ \pi_{1}(g_{1}(n,h))$ with $m$ replaced by $m-1$ and $\widetilde{R}$ replaced by $\widetilde{R_{1}}$. Then by an argument similar to that of $g(n,h)$, we can find the corresponding $\widetilde{\mathcal{E}_{2}}(n,h)$, $\widetilde{\gamma_{2}}(n,h)$ and so define $\mathcal{E}_{2}(n,h)$, $\mathcal{\gamma}_{2}(n,h)$ by $\pi_{1}^{-1}\circ A_{1}(\widetilde{\mathcal{E}_{2}}(n,h))$ and by $\pi_{1}^{-1}\circ A_{1}(\widetilde{\mathcal{\gamma}_{2}}(n,h))$, respectively. Define $g_{2}(n,h)$ by $g_{1}(n,h)-\mathcal{E}_{2}(n,h)-\gamma_{2}(n,h)$. Assume that in the $l$-th step, we apply Corollary \ref{two dimensional weyl's rule} to $g_{l-1}(n,h)$ with $\widetilde{R}$ replaced by $\widetilde{R}_{l-1}$, where $\widetilde{R}_{l-1}=(R_{l-1})^{B}$ and  $R_{l-1}=\widetilde{R}_{l-2}\widetilde{R}_{l-2}^{O(1)}$. Then we obtain

$\bullet$ $\mathcal{E}_{l}(n,h)$ satisfies $|\mathcal{E}_{l}(n,h+1) - \mathcal{E}_{l}(n,h)| \ll_d (\widetilde{R}_{l-1})^{O(1)}/(Hs)$ for $h\in [Hs]$;

$\bullet$ $\{\gamma_{l}(n,h)(mod~\mathbb{Z}^{m})\}_{(n,h)\in \mathbb{Z}^2}$ is $q_{l}s$-periodic for some positive integer $q_{l}\leq (\widetilde{R}_{l-1})^{O(1)}$;

$\bullet$ $g(n,h)=\sum_{i=1}^{l}\mathcal{E}_{i}(n,h)+g_{l}(n,h)+\sum_{i=1}^{l}\gamma_{i}(n,h)$, where $g_{l}(n,h)$ is a polynomial taking values in $G_{l}$, and $\{\sum_{i=1}^{l-1}\gamma_{i}(n,h)(mod~\mathbb{Z}^{m})\}_{(n,h)\in \mathbb{Z}^2}$ is $(\prod_{i=1}^{l-1}q_{i})s$-periodic with $\prod_{i=1}^{l-1}q_{i}\leq \widetilde{R}_{l-1}$.

Since $m$ is a given non-negative integer, the total number of iterations is bounded by $m$. This implies that for some positive integer $l$,
$\{g_{l}(n,h)(mod~\Gamma_{l})\}_{h\in \mathcal{A}_{a}}$ is totally $(\widetilde{R}_{l})^{-1}$-equidistributed in $G_{l}/\Gamma_{l}$ for all but $(\widetilde{R}_{l})^{-1} Ns$ values of $(n,a)\in [N]\times [s]$, where $G_{l}$ is a connected proper subgroup of $\mathbb{R}^{m}$. Let $G'=G_{l}$ and $\Gamma'=\Gamma_{l}=G'\cap \mathbb{Z}^{m}$. Choose $W=R_{l}=R^{O(B^{m})}$, $\mathcal{E}(n,h)=\sum_{i=1}^{l}\mathcal{E}_{i}(n,h)$, $g'(n,h)=g_{l}(n,h)$ and $\gamma(n,h)=\sum_{i=1}^{l}\gamma_{i}(n,h)$. So we obtain that $\{g'(n,h)(mod~\Gamma')\}_{h\in \mathcal{A}_{a}}$ is totally $W^{-B}$-equidistributed in $G'/\Gamma'$ for all but at most $W^{-B}Ns$ values of $(n,a)\in [N]\times [s]$. Let $\mathcal{N}$ denote the set of $n\in [N]$ which satisfies that at least  $(1-W^{-B/2})s$ choices of $a\in[s]$ has the property that $\{g'(n,h)(mod~\Gamma')\}_{h\in \mathcal{A}_{a}}$ is totally $W^{-B}$-equidistributed in $G'/\Gamma'$. It is not hard to check that
\[(N-|\mathcal{N}|)W^{-B/2}s\leq W^{-B}Ns.\]
Then $|\mathcal{N}|\geq (1-W^{-B/2})N $. We complete the proof.
\end{proof}
\begin{remark}\label{form of g'}
{\rm From the above proof, it is not hard to see that $g'(n,h)$ given in Theorem 2.1(\romannumeral2) is in form of $g'(n,h)=f'(n+h)$ for some polynomial sequence $f':\mathbb{N}\rightarrow G'$.
}\end{remark}

\section{Proof of Theorem \ref{the theorem we should prove}}\label{the third main result}
Actually, Theorem \ref{the theorem we should prove} follows from a more general result (Theorem \ref{the estimate of general multiplicative functions} below). Before stating the result, let us recall the following distance function introduced by Granville and Soundararajan (see e.g., \cite{GS07}). Given integer $s\geq1$, for two multiplicative functions $f(n)$ and $g(n)$ with $|f(n)|,|g(n)|\leq 1$ for all $n\geq 1$, define
\[\mathbb{D}_{s}(f(n),g(n);X):=\Bigg(\sum_{\substack{p\leq X\\p \nmid s}}\frac{1-\text{Re}(f(p)\overline{g(p)})}{p}\Bigg)^{\frac{1}{2}}\]
This distance function was used in \cite{BGS} to measure the pretentiousness between any multiplicative function $f(n)$ and some function for which exceptional modulus $s$ does exist.
Throughout define
\begin{align}\label{definition of distance function}
\begin{aligned}
M_{s}(f;X,T):=&\inf_{\chi(mod~s)}\inf_{|t|\leq T}\mathbb{D}_{s}(f\chi,n\mapsto n^{it};X)^2,\\
M(f;X,T,Y):=&\inf_{s\leq Y}M_{s}(f;X,T)=\inf_{\substack{s\leq Y\\ \chi(mod~s)}}\inf_{|t|\leq T}\mathbb{D}_{s}(f\chi,n\mapsto n^{it};X)^2.
\end{aligned}
\end{align}
\begin{theorem}\label{the estimate of general multiplicative functions}
Let $s\geq 1$, $H\geq 3$ and $d\geq 0$ be integers. Let $\beta(n)$ be a multiplicative function with $|\beta(n)|\leq 1$ for all $n\geq 1$. Let $P(x)\in \mathbb{R}[x]$ have degree $d$ and $F:\mathbb{R}/\mathbb{Z}\rightarrow \mathbb{C}$ be a Lipschitz function. For $N$ large enough with $s\log H\leq \frac{1}{2}(\log N)^{1/32}$, we have
\begin{align}\label{1/18}
\begin{aligned}
&\sum_{a=1}^s \sum_{n=1}^{N}\left|\sum_{\substack{1\leq h\leq Hs\\n+h\equiv a(mod~s)}}\beta(n+h)F\big(P(n+h)(mod~\mathbb{Z})\big)\right|\ll NHs\frac{s}{\varphi(s)}\frac{\log\log H}{\log H}\\
&+ NHs\frac{1}{(\log N)^{1/200}}+(\log N)^{o(1)}NHs\frac{M\big(\beta;N/\log N,2N,(\log N)^{1/32}\big)+1}{\exp\Big(\frac{1}{2}M\big(\beta;N/\log N,2N,(\log N)^{1/32}\big)\Big)},
\end{aligned}
\end{align}
where the implied constant depends on $d$ and $\|F\|_{Lip}$ at most.
\end{theorem}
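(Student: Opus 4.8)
The plan is to run the nilpotent "major/minor arcs" strategy of Green–Tao and He–Wang, but in the two-parameter setting adapted to short arithmetic progressions, using Theorem \ref{decomposition theorem} as the factorization input. First I would apply Theorem \ref{decomposition theorem} to the polynomial $f(n) = P(n)$ (viewed as a map $\mathbb{Z}\to\mathbb{R}^{m}$ with $m=1$ here, though the argument is written for general $m$ to allow vector-valued polynomial phases) with the scale $R$ and the parameter $B$ chosen as slowly growing powers of $\log N$ — concretely something like $R = (\log H)^{c}$ and a suitable constant $B$, so that $W = R^{O_d(B^m)}$ is still of size $(\log N)^{o(1)}$ while $W^{-B/2}$ beats $1/(\log N)^{1/200}$. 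This produces, for all $n$ in a set $\mathcal{N}$ of near-full density and for $(1-W^{-B/2})s$ residues $a\in[s]$, a decomposition $P(n+h) = \mathcal{E}(n,h) + g'(n,h) + \gamma(n,h)$ where $\mathcal{E}$ is Lipschitz-slowly-varying in $h$ (step size $\ll W/(sH)$), $g'$ is totally $W^{-B}$-equidistributed along $\mathcal{A}_a$ in $G'/\Gamma'$, and $\gamma$ is $qs$-periodic in $h$ with $q\le W$. On the exceptional $n\notin\mathcal{N}$ and the exceptional residues $a$ we bound trivially, losing only $W^{-B/2}NHs \ll NHs/(\log N)^{1/200}$.

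Next, for the good $(n,a)$ pairs, I would use the slow variation of $\mathcal{E}$ to replace $F(P(n+h))$ by $F(\mathcal{E}(n,h_0) + g'(n,h) + \gamma(n,h))$ on each block where $\mathcal{E}$ is essentially constant, at the cost of $\|F\|_{\mathrm{Lip}} \cdot W/(sH)$ per term (this is where $H$ enters: $H$ large makes this error negligible). Because $\gamma$ is $qs$-periodic in $h$ and $h$ ranges over an arithmetic progression mod $s$, $\gamma(n,h)$ is eventually constant along $\mathcal{A}_a$ (after splitting into $q$ subprogressions), so it can be absorbed into the argument of $F$ too. What remains is the average of $\beta(n+h)$ against a fixed Lipschitz function of the equidistributed sequence $g'(n,h)$ along the progression $\mathcal{A}_a$. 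Here I split into two cases according to the nontriviality of the abelian nil-orbit $g'$. In the \emph{minor arc} case ($G'\subsetneq\mathbb{R}^m$ nontrivial, genuine equidistribution), I would run a variant of the He–Wang argument from \cite[Section 6]{HW}: expand $F$ into characters, use the $W^{-B}$-equidistribution to reduce to bounding $\sum_h \beta(n+h) e(\theta h)$ for a non-trivially-irrational frequency $\theta$, and invoke the Matomäki–Radziwiłł type input — specifically \cite[Theorem 1.6]{KMT} on sums of multiplicative functions over almost all short arithmetic progressions — which is exactly what yields the $\frac{\log\log H}{\log H}$ gain together with the $M(\beta;\ldots)$-dependent term capturing the "pretentious" obstruction. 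In the \emph{major arc} case ($G' = \{0\}$ or trivial orbit), $F(g'(n,h))$ is constant and we are estimating $\sum_{h\in\mathcal{A}_a}\beta(n+h)$ directly over a short AP, which is again governed by \cite[Theorem 1.6]{KMT}, contributing the $\frac{s}{\varphi(s)}\frac{\log\log H}{\log H}$ main term (the $s/\varphi(s)$ factor comes from the density loss of restricting to $h\equiv a \pmod s$ and is built into the statement of that theorem).

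The hard part, and the reason Theorem \ref{decomposition theorem} had to be proved in its particular form, is controlling everything \emph{uniformly across the $s$ residue classes $a\in[s]$} while $s$ is allowed to be much larger than $H$: one cannot simply substitute $h = ls + a$ and quote \cite{HW}, because the resulting polynomial in $l$ lives on a too-short interval $[1,H]$ and its coefficients acquire $s$-dependent factors (this is precisely the content of Proposition \ref{0910proposition} and Corollary \ref{two dimensional weyl's rule}, tracking the $s^{i-k}$ powers). I expect the bookkeeping of the various $\widetilde{R}^{O(1)}$, $W$, $q$, and the number-of-iterations-$\le m$ bound, and verifying that the final aggregate error terms collapse to the three terms on the right-hand side of (\ref{1/18}) with the stated $(\log N)^{o(1)}$ losses — in particular checking the hypothesis $s\log H \le \frac{1}{2}(\log N)^{1/32}$ is exactly what is needed to keep the major-arc modulus below $(\log N)^{1/32}$ so that \cite[Theorem 1.6]{KMT} applies — to be the most delicate and technical portion of the argument. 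Theorem \ref{the theorem we should prove} then follows by taking $\beta = \mu$, $F(x) = e(x)$, and noting that $M(\mu;\ldots)\to\infty$ (since $\mu$ is not pretentious to any $\chi(n)n^{it}$), which kills the last term in (\ref{1/18}), and then dividing by $Ns$ and letting $N\to\infty$ with $h = H$ fixed.
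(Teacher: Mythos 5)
Your high-level architecture is the same as the paper's: apply Theorem \ref{decomposition theorem} to $g(n,h)=P(n+h)$, absorb the slowly varying part $\mathcal{E}$ and the $qs$-periodic part $\gamma$ by splitting $[Hs]$ into $W^2$ blocks and $q$ subprogressions, bound the exceptional $n\notin\mathcal{N}$ and exceptional residues trivially, and then run a major/minor arc dichotomy with \cite[Theorem 1.6]{KMT} as the major-arc input and a variant of \cite[Section 6]{HW} for the minor arc. However, your description of the minor arc contains a genuine gap. You propose to ``reduce to bounding $\sum_h\beta(n+h)e(\theta h)$ for a non-trivially-irrational frequency $\theta$ and invoke \cite[Theorem 1.6]{KMT}.'' That theorem bounds averages of a \emph{multiplicative} function over short arithmetic progressions; the twisted function $m\mapsto\beta(m)e(\theta m)$ is not multiplicative, so it cannot be fed into that result, and no Hal\'asz/pretentious-distance input appears in the minor arc at all. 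The actual mechanism (both here and in \cite[Section 6]{HW}) is different: one uses the Ram\'ar\'e-type identity $\beta(m)\approx\sum_{p\in\mathcal{P},\,p\mid m}\beta(p)\beta(m/p)/(1+|\{q\in\mathcal{P}:q\mid m/p\}|)$ to introduce a bilinear structure in $(p,l)$ with $m=pl$, applies Cauchy--Schwarz in $l$, and then shows (Proposition \ref{small portion with not equidistributed}, proved in Appendix \ref{aproofofaproposition}) that the off-diagonal terms having no cancellation would force $\{g'(n,h)\}_{h\in\mathcal{A}_a}$ to fail total $W^{-B/2}$-equidistribution for too many $a\in[s]$, contradicting property (ii) of the factorization. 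Relatedly, you place the $M(\beta;\cdot)$-dependent term in the minor arc; it in fact arises only in the major arc, from the Hal\'asz-type bound inside \cite[Theorem 1.6]{KMT}.

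Two further points you would need to repair in an execution. First, the main term $NHs\frac{s}{\varphi(s)}\frac{\log\log H}{\log H}$ does not come from a ``density loss built into'' \cite[Theorem 1.6]{KMT}; in both arcs it comes from a fundamental-lemma sieve bound for the integers $n+h$ lying outside the sets $\mathcal{S}$ (resp.\ $\widetilde{\mathcal{S}}$) of integers possessing a prime factor in prescribed ranges $[P_j,Q_j]$ \emph{coprime to $s$} --- the exclusion of $p\mid s$ is exactly what produces the factor $\prod_{p\mid s}(1-1/p)^{-1}=s/\varphi(s)$, and $\log P_1/\log Q_1\asymp\log\log H/\log H$ produces the other factor. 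You never introduce these sets, yet they are also the source of the $(\log N)^{o(1)}$ factor (a $2^{J}$ loss from expanding $1_{\mathcal{S}}$ by inclusion--exclusion inside the Hal\'asz argument), and of the need for the Dirichlet-convolution reduction $\beta=\beta_1*\alpha$ to a completely multiplicative $\beta_1$. Second, your parameter choice ties $W$ to powers of $\log N$ and claims $W^{-B/2}\ll(\log N)^{-1/200}$; since $H$ may stay bounded while $N\to\infty$, the scale must be tied to $H$ (the paper takes $W=\lfloor\log H\rfloor$ with $B$ an absolute constant), and the exceptional-set loss $W^{-B/2}NHs$ is then absorbed into the $\frac{\log\log H}{\log H}$ term rather than the $(\log N)^{-1/200}$ term. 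Your final deduction of Theorem \ref{the theorem we should prove} from Theorem \ref{the estimate of general multiplicative functions} via Proposition \ref{non-pretentious} is correct.
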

We remark that the constant implied in formula (\ref{1/18}) would be universal for $F$ from the family consisting of all Lipschitz functions whose Lipschitz norm is bounded by a given constant.

The factor $(\log N)^{o(1)}$ in (\ref{1/18}) comes from the restriction that the summation on  $n,h$ is over a certain dense set $\mathcal{S}$ of natural numbers (see Subsection \ref{the estimate of major arc case}). As pointed in the remark below \cite[Theorem 9.2]{KM2}, $(\log N)^{o(1)}M(\beta;\cdot,\cdot,\cdot)\exp(-\frac{1}{2}M(\beta;\cdot,\cdot,\cdot))$ would be $o_{N}(1)$ if $M(\beta;\cdot,\cdot,\cdot)$ tends to infinity faster than  $\log\log N$. In case $M(\beta;\cdot,\cdot,\cdot)$ tends to infinity slowly, it seems to be hard to show that $(\log N)^{o(1)}M(\beta;\cdot,\cdot,\cdot)\exp(-\frac{1}{2}M(\beta;\cdot,\cdot,\cdot))$ is small. For this case, we use a slightly different method to estimate the left-hand side of $(\ref{1/18})$ (see Remark \ref{another estimate about Theorem1}) to avoid the factor $(\log N)^{o(1)}$.

\begin{remark}\label{our method for non-abelian cases}
{\rm{Note that a simply connected abelian Lie group is isomorphic to $\mathbb{R}^{m}$, and any discrete subgroup of $\mathbb{R}^{m}$ is a lattice. So for the abelian nilmanifold, we may assume that $G=\mathbb{R}^{m}$ and $\Gamma=\mathbb{Z}^{m}$ after a linear transformation. It is not hard to check that by an argument similar to the proof of Theorem \ref{the estimate of general multiplicative functions}, we can extend $\mathbb{R}/\mathbb{Z}$ in Theorem \ref{the estimate of general multiplicative functions} to abelian nilmanifolds $G/\Gamma$, and extend $P(x)$ to polynomial sequences $p:\mathbb{Z}\rightarrow G$ in the form of $p(n)=a_{1}^{b_{1}(n)}\cdot\cdot\cdot a_{k}^{b_{k}(n)}$ for $a_{1},\ldots,a_{k}\in G$ and $b_{1}(x),\ldots,b_{k}(x)\in \mathbb{R}[x]$ with integer coefficients.}}
\end{remark}

Theorem \ref{the theorem we should prove} follows directly from Theorem \ref{the estimate of general multiplicative functions} through the following well-known result (see, e.g., \cite[(1.12)]{KMT17}) about the ``non-pretentious'' nature of the M\"{o}bius function.
\begin{proposition}\label{non-pretentious}
Let $N$ be large enough. Then for $\varepsilon>0$ sufficiently small,
\[
M(\mu;N/\log N,N,(\log N)^{1/32}) \geq (1/3-\epsilon) \log\log N+O(1).
\]
\end{proposition}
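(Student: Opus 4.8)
The statement is essentially \cite[(1.12)]{KMT17}, and the plan is to recover it by unwinding the pretentious-distance definition and feeding in the Vinogradov--Korobov zero-free region. First I would unfold the definitions. Since $\mu(p)=-1$ for every prime $p$, writing $X:=N/\log N$, $T:=N$, $Y:=(\log N)^{1/32}$, then for any modulus $s\le Y$, any $\chi\ (\mathrm{mod}\ s)$ induced by a primitive character $\chi^{*}\ (\mathrm{mod}\ q^{*})$ with $q^{*}\mid s$, and any $|t|\le T$,
\[
\mathbb{D}_{s}\big(\mu\chi,\, n\mapsto n^{it};\,X\big)^{2}=\sum_{\substack{p\le X\\ p\nmid s}}\frac{1+\Re\big(\chi(p)p^{-it}\big)}{p}=\log\log X+\Re\!\!\sum_{\substack{p\le X\\ p\nmid s}}\frac{\chi^{*}(p)p^{-it}}{p}+O(1),
\]
using Mertens' theorem and $\chi(p)=\chi^{*}(p)$ for $p\nmid s$. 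The primes $p\mid s$ alter the last sum by $O\big(\sum_{p\mid s}1/p\big)=O(\log\log\log N)=o(\log\log N)$, so they may be discarded. Thus it suffices to prove, uniformly in the admissible $s,\chi,t$, that
\[
\Re\sum_{p\le X}\frac{\chi^{*}(p)p^{-it}}{p}\ \ge\ -\Big(\tfrac23+\varepsilon\Big)\log\log N+O_{\varepsilon}(1).
\]

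Second I would pass to the associated $L$-function. Truncating the Dirichlet series $\log L(s,\chi^{*})=\sum_{p^{k}}\frac{\chi^{*}(p)^{k}}{kp^{ks}}$ at $\sigma=1+1/\log X$ and $s=1+1/\log X-it$ gives
\[
\sum_{p\le X}\frac{\chi^{*}(p)p^{-it}}{p}=\log L(1-it,\chi^{*})+O(1)\qquad\big(\text{resp. }\log\zeta(1-it)+O(1)\ \text{if}\ \chi^{*}=\mathbf 1\big),
\]
provided $L(\cdot,\chi^{*})$ is zero-free and of controlled size on the short horizontal segment from $1+1/\log X-it$ to $1-it$. For $|t|\le X$ this is guaranteed by the Vinogradov--Korobov zero-free region (here $q^{*}\le(\log X)^{1/32}$, so $\log(q^{*}(|t|+2))\ll\log X$ and hence $c\,(\log(q^{*}(|t|+2)))^{-2/3-\varepsilon}\gg 1/\log X$), whereas the classical de la Vall\'ee Poussin region is too narrow to reach $\sigma=1-1/\log X$ once $|t|$ is a power of $X$. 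Taking real parts, $\Re\log L(1-it,\chi^{*})=\log|L(1-it,\chi^{*})|$, and the same zero-free region yields the standard lower bound $|L(1+iy,\chi^{*})|\gg_{\varepsilon}(\log(q^{*}(|y|+3)))^{-2/3-\varepsilon}$; since $q^{*}(|t|+3)\le X^{O(1)}$ this gives $\log|L(1-it,\chi^{*})|\ge-(\tfrac23+\varepsilon)\log\log X+O_{\varepsilon}(1)$ (and likewise with $\zeta$, where near $t=0$ the pole at $s=1$ only makes this quantity large and positive). Combining the three steps,
\[
\mathbb{D}_{s}\big(\mu\chi,\, n\mapsto n^{it};\,X\big)^{2}\ \ge\ \Big(\tfrac13-\varepsilon\Big)\log\log X+O_{\varepsilon}(1)=\Big(\tfrac13-\varepsilon\Big)\log\log N+O_{\varepsilon}(1),
\]
uniformly over $s\le(\log N)^{1/32}$, $\chi\ (\mathrm{mod}\ s)$ and $|t|\le N$; taking the infimum over these parameters proves the proposition, the constant $1/3$ being the reflection of the exponent $2/3$ in the Vinogradov--Korobov estimate.

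The main obstacle is precisely the uniformity over the very long range $|t|\le N$: both the truncation step and the lower bound for $|L(1+iy,\chi^{*})|$ must survive for $|t|$ comparable to $X=N/\log N$, which is exactly the regime where one needs the Vinogradov--Korobov region rather than the classical one, and where the $2/3$ (hence the final $1/3$) is forced. By contrast the modulus constraint $s\le(\log N)^{1/32}$ is not tight, since it enters only through $\log(q^{*}(|t|+2))\ll\log N$ and $\sum_{p\mid s}1/p=o(\log\log N)$, so any fixed power of $\log N$ would suffice. (If one prefers, the whole proposition may simply be quoted from \cite[(1.12)]{KMT17}.)
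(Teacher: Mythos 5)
Your proposal is sound and, in effect, supplies the standard proof behind the paper's one-line justification: the paper does not prove Proposition \ref{non-pretentious} at all, but simply quotes it from \cite[(1.12)]{KMT17}, exactly as you note in your closing parenthesis. Your derivation --- expanding $\mathbb{D}_s(\mu\chi,n\mapsto n^{it};X)^2=\log\log X+\Re\sum_{p\le X,\,p\nmid s}\chi^{*}(p)p^{-it}/p+O(1)$ via Mertens, discarding the primes $p\mid s$ at cost $o(\log\log N)$, relating the remaining character sum to $\log|L|$ just to the right of the $1$-line, and invoking the Vinogradov--Korobov region to get $\log|L|\ge-(\tfrac23+\epsilon)\log\log N+O_{\epsilon}(1)$ uniformly for $|t|\le N$ and $q^{*}\le(\log N)^{1/32}$ --- is precisely the standard route to (1.12), and your diagnosis that the long $t$-range is what forces Vinogradov--Korobov (hence the exponent $1/3=1-2/3$) is correct. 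A cosmetic point: the truncation $\sum_{p\le X}\chi^{*}(p)p^{-it}/p=\log L(1+1/\log X+it,\chi^{*})+O(1)$ holds unconditionally, since $\sum_{p>X}p^{-1-1/\log X}=O(1)$ absolutely; the zero-free region is needed only for the lower bound on $|L|$.

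The one step you should not leave implicit is the case of a real primitive $\chi^{*}$ and small $|t|$: the bound $|L(1+iy,\chi^{*})|\gg_{\epsilon}(\log(q^{*}(|y|+3)))^{-2/3-\epsilon}$ as stated can fail near a possible exceptional (Siegel) zero, which the Vinogradov--Korobov region does not exclude. Here this is harmless because the conductor is tiny: Siegel's theorem gives $L(1,\chi^{*})\gg_{\delta}(q^{*})^{-\delta}$ and $1-\beta_{1}\gg_{\delta}(q^{*})^{-\delta}$, and since $q^{*}\le(\log N)^{1/32}$ the resulting additional loss is at most $O(\delta\log\log N)$, which is absorbed into the $\epsilon$ of the statement (at the price of an ineffective $O(1)$, which the proposition permits; the effective bound $L(1,\chi^{*})\gg(q^{*})^{-1/2}$ would instead cost a fixed fraction of $\tfrac{1}{32}\log\log N$ and would not suffice for arbitrarily small $\epsilon$). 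With that caveat added, your argument is complete and consistent with the source the paper cites.
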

In the rest part of this section, we shall prove Theorem \ref{the estimate of general multiplicative functions}. We now outline the strategy in our proof. Using the quantitative factorization theorem (i.e., Theorem \ref{decomposition theorem}), we split the proof into two cases, the major and minor arc cases. This point is more clear if the Lipschitz function $F$ has zero mean. For this situation, the major arc part corresponds to $g'=0~(G'=\{0\})$, while the minor arc part corresponds to $g'\neq 0~(G'=\mathbb{R})$ in Theorem \ref{decomposition theorem}. In the major arc part, we use the result \cite[Theorem 1.6]{KMT} to sums of multiplicative functions over almost all short arithmetic progressions. In the minor arc case, we use a variant of the arguments of He-Wang \cite[Section 6]{HW} to obtain the cancellation we need.

Recall that we use $[N]$ to denote the set $\{1,\ldots,N\}$. In the following, for integers $n,a,s$, the notation $n\equiv a(s)$ means $n\equiv a$(mod~$s$).
In this section, many implicit constants $O(1)=O_{d}(1)\geq 1$ will appear and may not be the same in each occurrence. For convenience, we use a common constant $C_{0}=O_{d}(1)$ that is large enough for all these purposes. Let
\begin{equation}\label{parameter b}
B=100C_{0}^2, ~~~~~~~~~~~~~B_{2}=\frac{1}{10}C_{0}^{-2}B.
\end{equation}
The basic relation in this section among $s$, $H$, $N$ is as in Theorem \ref{the estimate of general multiplicative functions}, i.e.,
\[s\log H\leq \frac{1}{2}(\log N)^{1/32}.\]
For given $H$ sufficiently large, set
\begin{equation}\label{parameter choice}
W=\lfloor\log H\rfloor.
\end{equation}
Suppose $P(n)=\sum_{i=0}^{d}\alpha_{i}n^{i}$ with each $\alpha_{i}\in \mathbb{R}$ and $g(n,h)=P(n+h)$. Applying Theorem \ref{decomposition theorem} to $g(n,h)$ with $m=1$, there is a group $G'=\mathbb{R}$ or $\{0\}$, a set
\begin{equation}\label{defofdensesetn}
\mathcal{N}\subseteq [N]
\end{equation}
with $|\mathcal{N}|\geq (1-W^{-B/2})N$,
and a decomposition of
\begin{equation}\label{decomposition}
g(n,h) =\mathcal{E}(n,h) + g'(n,h) + \gamma(n,h)
\end{equation}
into polynomials $\mathcal{E}, g', \gamma: \mathbb{Z}^2\rightarrow \mathbb{R}$ with the following properties:

(\romannumeral1). $|\mathcal{E}(n,h+1) - \mathcal{E}(n,h)| \ll_d \frac{1}{sH/W}$ for $h\in [Hs]$.

(\romannumeral2). $g'(n,h)$ takes values in $G'$, and for any $n\in \mathcal{N}$, there are at least  $(1-W^{-B/2})s$ integers of $a\in[s]$ such that $\{g'(n,h)\Gamma'\}_{h\in \mathcal{A}_{a}}$ is totally $W^{-B}$-equidistributed in $G'/\Gamma'$, where $\mathcal{A}_{a}=\{1\leq h\leq Hs:h\equiv a(s)\}$ and $\Gamma'=G'\cap \mathbb{Z}$.

(\romannumeral3). For some integer $q$ with $1\leq q \leq W$, $\{\gamma(n,h)(mod~\mathbb{Z})\}_{(n,h)\in \mathbb{Z}^2}$ is $qs$-periodic.

In addition, we assume that $q\in (W/2,W]$, otherwise we consider a multiplier of $q$. Let $\beta:\mathbb{N}\rightarrow \mathbb{C}$ be a multiplicative function with $|\beta(n)|\leq 1$ for each $n\in \mathbb{N}$ and let $F:\mathbb{R}/\mathbb{Z}\rightarrow \mathbb{C}$ be a Lipschitz function with $\|F\|_{Lip}\leq 1$ (see equation (\ref{def of Lip}) for the definition of $\|\cdot\|_{Lip}$).
\subsection{Splitting into major and minor arc cases} Given $s\geq 1$, for any $a=1,\ldots,s$ and $n\in \mathbb{N}$, choose $\theta_{n,a}\in \mathbb{C}$ with $|\theta_{n,a}|=1$ such that
\begin{equation}\label{10/26formula1}
\Bigg| \sum_{\substack{h\leq Hs \\ n+h\equiv a(s)}}
\beta(n+h) F\big(g(n,h)(mod~\mathbb{Z})\big)\Bigg|=\theta_{n,a}\sum_{\substack{h\leq Hs \\ n+h\equiv a(s)}}
\beta(n+h) F\big(g(n,h)(mod~\mathbb{Z})\big).
\end{equation}
We first split the interval $[1,Hs]$ into disjoint subintervals $I_1,\ldots,I_{W^2}$, each has length $W^{-2}Hs$. To apply the periodicity of $\gamma(n,h)(mod~\mathbb{Z})$, we further split each subinterval $I_{t}$ into progressions $n+h\equiv js+a (qs)$ for $j\in \{0,\ldots,q-1\}$. For any given $j\in \{0,\ldots,q-1\}$ and $t\in \{1,\ldots,W^2\}$, define $\mathcal{E}_{a,n,j,t}=\mathcal{E}(n,h')$, where $h'$ is the smallest integer in $I_{t}$ with $n+h'\equiv js+a (qs)$. Then by property (\romannumeral1), for any $h\in I_{t}$,
\begin{equation}\label{10/26noon}
|\mathcal{E}_{a,n,j,t}-\mathcal{E}(n,h)|\leq \frac{W}{Hs}W^{-2}Hs=W^{-1}.
\end{equation}
Choose $\gamma_{a,n,j,t}\in [0,1)$ with $\gamma_{a,n,j,t}(mod~\mathbb{Z})=\gamma(n,h)(mod~\mathbb{Z})$ for any $h\in I_{t}$ satisfying $n+h\equiv js+a (qs)$. Note that $\gamma(n,h) (mod~\mathbb{Z})$ is $qs$-periodic, so $\gamma_{a,n,j,t}$ is independent of $h$ with  $n+h\equiv js+a (qs)$.

Define the function\footnote{In the general form, we should define $G_{a,n,j,t}=\gamma_{a,n,j,t}^{-1}G'\gamma_{a,n,j,t}$ and $\Gamma_{a,n,j,t}=G_{a,n,j,t}\cap \Gamma$. In the non-abelian case, for any given $j,t$, there may be $s^2$ distinct $G_{a,n,j,t}$ and $\Gamma_{a,n,j,t}$. Since we are in the abelian case, $G_{a,n,j,t}=G'$ and $\Gamma_{a,n,j,t}=G'\cap \Gamma=\Gamma'$.\label{footnote6}} $F_{a,n,j,t}:G'/\Gamma'\rightarrow \mathbb{C}$ as $F_{a,n,j,t}(y\Gamma')=\theta_{n,a}F\big((\mathcal{E}_{a,n,j,t}+\gamma_{a,n,j,t}+y)(mod~\mathbb{Z})\big)$ for any $y\in G'$. Note that $G'/\Gamma'=\mathbb{R}/\mathbb{Z}$ or $\{0\}$. It is easy to see that $F_{a,n,j,t}$ is well-defined. By formulas (\ref{decomposition}) and (\ref{10/26noon}),
\begin{equation}\label{noon 2}
|F_{a,n,j,t}(g'(n,h)\Gamma')-\theta_{n,a}F\big(g(n,h)(mod~\mathbb{Z})\big)|\leq W^{-1}
\end{equation}
holds for all $h\in I_{t}$ with $n+h\equiv js+a (qs)$.
Then by the above formula and formula (\ref{10/26formula1}),
\begin{align}\label{afternoon1}
\begin{aligned}
&\sum_{a=1}^s \sum_{n=1}^{N} \Bigg| \sum_{\substack{h\leq Hs \\ n+h\equiv a(s)}}
\beta(n+h) F\big(g(n,h)(mod~\mathbb{Z})\big)\Bigg| \\
=&\ds\sum_{a=1}^s \sum_{n=1}^{N}\sum_{t=1}^{W^2} \sum_{j=0}^{q-1} \sum_{\substack{h\in I_t \\ n+h\equiv js+a (qs)}}
\beta(n+h) F_{a,n,j,t}(g'(n,h)\Gamma') + O(W^{-1}HNs).
\end{aligned}
\end{align}
We write $F_{a,n,j,t}$ as $E_{a,n,j,t}+\widetilde{F}_{a,n,j,t}$, where $E_{a,n,j,t}=\int_{G'/\Gamma'}F_{a,n,j,t}$ is a constant and $\widetilde{F}_{a,n,j,t}$ has zero average on $G'/\Gamma'$. In the following, we shall apply different methods to estimate the major arc part
\begin{equation}\label{afternoon2}
\sum_{a=1}^s \sum_{n=1}^{N} \sum_{t=1}^{W^2} \sum_{j=0}^{q-1} \sum_{\substack{h\in I_t \\ n+h\equiv js+a (qs)}}
\beta(n+h)E_{a,n,j,t},
\end{equation}
and the minor arc part
\begin{equation}\label{afternoon3}
\sum_{a=1}^s \sum_{n=1}^{N} \sum_{t=1}^{W^2} \sum_{j=0}^{q-1} \sum_{\substack{h\in I_t \\ n+h\equiv js+a (qs)}}
\beta(n+h)\widetilde{F}_{a,n,j,t}(g'(n,h)\Gamma').
\end{equation}
\subsection{The estimate of major arc case.}\label{the estimate of major arc case}  For the major arc estimate, we shall show the following result.
\begin{theorem}\label{the estimate of major arc part}
Let $s\geq 1$ and $H\geq 3$ be integers. Let $N$ be large enough with $s\log H\leq \frac{1}{2}(\log N)^{1/32}$. Let $\beta(n)$ be a multiplicative function with $|\beta(n)|\leq 1$ for all $n\geq 1$. Then
\begin{align*}
  (\ref{afternoon2})\ll &NHs\Bigg(\frac{s}{\varphi(s)}\frac{\log\log H}{\log H}+\frac{1}{(\log N)^{1/200}}\Bigg) \\
  &+(\log N)^{o(1)}NHs\frac{M\big(\beta;N/\log N,2N,(\log N)^{1/32}\big)+1}{\exp\Big(\frac{1}{2}M\big(\beta;N/\log N,2N,(\log N)^{1/32}\big)\Big)}.
\end{align*}
\end{theorem}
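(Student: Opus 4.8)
The plan is to discard the constants $E_{a,n,j,t}$, rewrite (\ref{afternoon2}) as a sum of $\beta$ over short arithmetic progressions, and then quote the Matom\"{a}ki--Radziwi{\l}{\l}--Tao estimate \cite[Theorem 1.6]{KMT} for multiplicative functions in almost all short arithmetic progressions. Since $E_{a,n,j,t}=\int_{G'/\Gamma'}F_{a,n,j,t}$ and $\|F\|_{Lip}\le 1$ we have $|E_{a,n,j,t}|\le 1$, so by the triangle inequality $|(\ref{afternoon2})|$ is at most
\[
\sum_{a=1}^{s}\sum_{n=1}^{N}\sum_{t=1}^{W^{2}}\sum_{j=0}^{q-1}\Big|\sum_{\substack{h\in I_{t}\\ n+h\equiv js+a\,(qs)}}\beta(n+h)\Big|.
\]
Writing $m=n+h$ and noting that, as $(a,j)$ runs over $[s]\times\{0,\dots,q-1\}$, the residue $js+a$ runs exactly once over $\mathbb{Z}/qs\mathbb{Z}$, while $I_{1},\dots,I_{W^{2}}$ partition $[1,Hs]$ into intervals of length $L:=W^{-2}Hs$, this is at most
\[
\sum_{n=1}^{N}\sum_{t=1}^{W^{2}}\sum_{r=1}^{qs}\Big|\sum_{\substack{n+(t-1)L<m\le n+tL\\ m\equiv r\,(qs)}}\beta(m)\Big|\ \le\ W^{2}\sum_{y=1}^{2N}\sum_{r=1}^{qs}\Big|\sum_{\substack{y<m\le y+L\\ m\equiv r\,(qs)}}\beta(m)\Big|,
\]
where for each fixed $t$ I substituted $y=n+(t-1)L$ and enlarged the $y$-range, which is harmless because $W^{2}L=Hs$ is negligible beside $N$ under $s\log H\le\frac{1}{2}(\log N)^{1/32}$. (If Theorem \ref{decomposition theorem} returns $G'=\mathbb{R}$ rather than $G'=\{0\}$, then $E_{a,n,j,t}=\theta_{n,a}\int_{\mathbb{R}/\mathbb{Z}}F$ is independent of $j$ and $t$, so the $j$-splitting is unnecessary and one reaches the same kind of sum with $qs$ replaced by $s$; I treat both subcases in parallel.)

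It remains a bookkeeping check that \cite[Theorem 1.6]{KMT} applies at scale $X=2N$: one has $qs\le s\lfloor\log H\rfloor\le(\log N)^{1/32}$ (which is why the third argument of $M(\,\cdot\,)$ in Theorem \ref{the estimate of general multiplicative functions} is $(\log N)^{1/32}$), the progressions have length $L/(qs)=W^{-2}H/q\ge H/(\log H)^{3}$, and $\log(L/(qs))\asymp\log H$, $\log\log(L/(qs))\asymp\log\log H$. Then \cite[Theorem 1.6]{KMT} bounds the last double sum by $NL$ times
\[
\frac{s}{\varphi(s)}\frac{\log\log H}{\log H}+\frac{1}{(\log N)^{1/200}}+(\log N)^{o(1)}\frac{M_{qs}+1}{\exp(\tfrac12 M_{qs})},\qquad M_{qs}:=M_{qs}\big(\beta;N/\log N,2N\big);
\]
multiplying by $W^{2}$ turns $NL=NW^{-2}Hs$ into $NHs$, which is the asserted shape. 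Finally $qs\le(\log N)^{1/32}$ gives $M_{qs}\ge M(\beta;N/\log N,2N,(\log N)^{1/32})=:M$ by definition of $M$, and since $x\mapsto(x+1)e^{-x/2}$ is eventually decreasing, $(M_{qs}+1)e^{-M_{qs}/2}\le(M+1)e^{-M/2}$, the bounded range where monotonicity fails being absorbed (or dealt with as in Remark \ref{another estimate about Theorem1}). This proves the bound for (\ref{afternoon2}).

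The step I expect to be the real obstacle is arranging the reduction so that \cite[Theorem 1.6]{KMT} applies verbatim: one must make sure the $qs$-periodic twist coming from $\gamma$ is absorbed entirely into the modulus $qs$, with no surviving additive-character twist, and that the passage from modulus $s$ to modulus $qs$ (forced by this twist when $G'=\{0\}$) costs nothing worse than $\frac{s}{\varphi(s)}$. For the latter one uses that \cite[Theorem 1.6]{KMT}, summed over \emph{all} residue classes modulo $qs$ and not merely those coprime to $qs$, is insensitive to the coprimality structure of the modulus, so its main term is governed by $\frac{\log\log(L/(qs))}{\log(L/(qs))}\asymp\frac{\log\log H}{\log H}$ and not by $\frac{qs}{\varphi(qs)}\frac{\log\log H}{\log H}$; the factor $\frac{s}{\varphi(s)}\ge 1$ then enters only as a (wasteful but convenient) upper bound matching the form used in (\ref{restriction1b}). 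Everything else is routine summation.
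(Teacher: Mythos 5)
There is a genuine gap, and it sits exactly at the step you flagged as "the real obstacle" but then waved through. The estimate \cite[Theorem 1.6]{KMT} (and the paper's adaptation of it, Lemma \ref{a lemma need to be used}) is an $L^2$ bound in the starting point $x$ of the short progression, whose right-hand side contains the sieve-theoretic main term to the \emph{first} power inside the square. To control (\ref{afternoon2}) you need an $L^1$ average over $n$, and the passage from $L^2$ to $L^1$ by Cauchy--Schwarz square-roots every error term. The terms that are negative powers of $\log H$ or $\log N$ survive this, but a term of size $\frac{\log\log H}{\log H}$ becomes $\bigl(\frac{\log\log H}{\log H}\bigr)^{1/2}$, which is far larger than the bound claimed in Theorem \ref{the estimate of major arc part}. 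This is precisely the distinction the paper draws in Remark \ref{another estimate about Theorem1}: the "direct" application you propose yields $NHs\bigl(\frac{s}{\varphi(s)}\frac{\log\log H}{\log H}\bigr)^{1/2}+\cdots$ without the $(\log N)^{o(1)}$ factor, not the stated bound. Your displayed bound is therefore a chimera: it pairs the first-power main term with the $(\log N)^{o(1)}$ prefactor, but \cite[Theorem 1.6]{KMT} as it stands delivers neither in that combination.

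The missing idea is the decomposition of the inner sum according to whether $n+h$ lies in the dense set $\mathcal{S}$ of (\ref{definition of dense set1}) (integers having a prime factor in each range $\mathcal{P}_j$, with those primes required to be coprime to $s$). The complement $n+h\notin\mathcal{S}$ is bounded in $L^1$ directly by the fundamental sieve (Lemma \ref{an estimate needed in the major arc}), with no Cauchy--Schwarz loss; this is the \emph{sole} source of the main term $\frac{s}{\varphi(s)}\frac{\log\log H}{\log H}$, and the factor $\frac{s}{\varphi(s)}$ is not a "wasteful but convenient upper bound" but the genuine sieve loss from excluding the primes dividing $s$ from $\mathcal{P}_j$. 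The $\mathcal{S}$-restricted sum is then fed into the modified $L^2$ estimate (Lemma \ref{a lemma need to be used}), whose bound contains no $\frac{\log\log H}{\log H}$ term at all (only $P_1^{-1/6+\eta}(\log Q_1)^{1/3}$, a power of $\log N$, and the pretentious-distance term with the $2^{2J}$ that becomes $(\log N)^{o(1)}$), so Cauchy--Schwarz is harmless there. Two further steps you omit are also load-bearing: the Dirichlet convolution $\beta=\beta_1*\alpha$ with $\beta_1$ completely multiplicative, which is what permits the gcd-splitting over $d\mid s$ in Corollary \ref{a corollary need to be used} and hence the treatment of residues not coprime to the modulus (your claim that the result is "insensitive to the coprimality structure of the modulus" is not a substitute for this); and the compatibility of the set $\mathcal{S}$ with that splitting, i.e.\ $1_{\mathcal{S}}(dn)=1_{\mathcal{S}}(n)$ for $d\mid s$, which only works because $\mathcal{S}$ was defined relative to $s$ in the first place.
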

We first bound formula (\ref{afternoon2}) by
\begin{align}\label{evening}
\begin{aligned}
&(\ref{afternoon2})\ll \sum_{a=1}^s \sum_{n=1}^{N}
\sum_{t=1}^{W^2} \sum_{j=0}^{q-1}
\left|\sum_{\substack{h\in I_t \\ n+h\equiv js+a(qs)}} \beta(n+h) \right|\\
&=\sum_{c=1}^{qs} \sum_{n=1}^{N}
\sum_{t=1}^{W^2}
\left|\sum_{\substack{h\in I_t \\ n+h\equiv c(qs)\\n+h\in \mathcal{S} }} \beta(n+h)\right|
+\sum_{c=1}^{qs} \sum_{n=1}^{N}
\sum_{t=1}^{W^2}
\left|\sum_{\substack{h\in I_t \\ n+h\equiv c(qs)\\n+h\notin \mathcal{S} }} \beta(n+h)\right|,
\end{aligned}
\end{align}
where $\mathcal{S}$ is given as follows. Let
\begin{equation}\label{choice of pandq}
P_{1}=(\log H)^{6000}, Q_{1}=H/(\log H)^4.
\end{equation}
Then we define $P_{j}$, $Q_{j}$ inductively in the following way, which is the same as \cite[Section 2]{KM},
\begin{equation}\label{1/17}
P_{j}=\exp(j^{4j}(\log Q_{1})^{j-1}\log P_{1}),~~Q_{j}=\exp(j^{4j+2}(\log Q_{1})^{j}).
\end{equation}
Let $N_{0}=\sqrt{N}$. Let $J$ be the largest index such that $Q_{J}\leq \exp\big(\sqrt{\log N_{0}}\big)$. For $j=1,\ldots,J$, denote by
\[\mathcal{P}_{j}=\{p:~\text{a~prime~in}~[P_{j},Q_{j}]~\text{with}~p\nmid s\}.\]
Define
\begin{equation}\label{definition of dense set1}
\mathcal{S}=\{n\leq 2N:n {\rm~has~at~least~one~prime~factor~in}~\mathcal{P}_{j} ~{\rm for~all~} j\in \{1,\ldots,J\}\}.
\end{equation}
We now estimate the contribution from $n+h\notin \mathcal{S}$ in formula (\ref{evening}).
\begin{lemma}\label{an estimate needed in the major arc}
Let $\mathcal{S}$ be as the above.
\[\sum_{c=1}^{qs} \sum_{n=1}^{N}
\sum_{t=1}^{W^2}
\left|\sum_{\substack{h\in I_t \\ n+h\equiv c(qs)\\n+h\notin \mathcal{S} }} \beta(n+h)\right|\ll NHs\frac{s}{\varphi(s)}\frac{\log\log H}{\log H}. \]
\end{lemma}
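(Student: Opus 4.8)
The plan is to simply discard the multiplicative function. Since $|\beta(n)|\le 1$, each innermost sum is at most the number of its terms, so the left-hand side of the lemma is bounded by
\[
\sum_{n=1}^{N}\sum_{c=1}^{qs}\sum_{t=1}^{W^2}\#\bigl\{h\in I_t:\ n+h\equiv c\,(qs),\ n+h\notin\mathcal{S}\bigr\}.
\]
Because $I_1,\dots,I_{W^2}$ partition $[1,Hs]$ and, for each fixed $n$, every $h$ lies in exactly one residue class $c\in\{1,\dots,qs\}$ modulo $qs$, the double sum over $(c,t)$ just recounts each $h\in[1,Hs]$ once; hence this equals $\sum_{n=1}^{N}\#\{h\in[1,Hs]:\ n+h\notin\mathcal{S}\}$. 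Putting $m=n+h$ and noting that each $m$ is represented by at most $Hs$ pairs $(n,h)$ with $1\le n\le N$, $1\le h\le Hs$, and that $m\le N+Hs\le 2N$ for $N$ large (the hypothesis $s\log H\le\frac12(\log N)^{1/32}$ forces $Hs=N^{o(1)}$), the whole expression is $\le Hs\cdot\#\{m\le 2N:\ m\notin\mathcal{S}\}$. So it suffices to prove $\#\{m\le 2N:\ m\notin\mathcal{S}\}\ll N\,\frac{s}{\varphi(s)}\,\frac{\log\log H}{\log H}$.

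By the definition of $\mathcal{S}$, every $m\le 2N$ with $m\notin\mathcal{S}$ has no prime factor in $\mathcal{P}_j$ for at least one $j\in\{1,\dots,J\}$, so by the union bound the quantity above is at most $\sum_{j=1}^{J}\#\{m\le 2N:\ p\mid m\Rightarrow p\notin\mathcal{P}_j\}$. For each $j$ all primes of $\mathcal{P}_j$ are $\le Q_J\le\exp(\sqrt{\log N_0})=\exp(\sqrt{(\log N)/2})$, far below any fixed power of $N$, so a standard upper-bound sieve (the fundamental lemma of sieve theory) gives
\[
\#\{m\le 2N:\ p\mid m\Rightarrow p\notin\mathcal{P}_j\}\ \ll\ N\!\!\prod_{\substack{P_j\le p\le Q_j\\ p\nmid s}}\!\!\Bigl(1-\tfrac1p\Bigr)\ \le\ N\,\frac{s}{\varphi(s)}\!\!\prod_{P_j\le p\le Q_j}\!\!\Bigl(1-\tfrac1p\Bigr),
\]
where the last step uses $\prod_{p\mid s}(1-1/p)^{-1}=s/\varphi(s)$ to discard the condition $p\nmid s$. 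By Mertens' theorem the remaining product is $\ll\log P_j/\log Q_j$.

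Finally I would carry out the elementary computation that $\sum_{j=1}^{J}\log P_j/\log Q_j\ll\log\log H/\log H$. From $P_1=(\log H)^{6000}$, $Q_1=H/(\log H)^4$ one gets $\log P_1\asymp\log\log H$ and $\log Q_1\asymp\log H$, so the $j=1$ term is $\asymp\log\log H/\log H$; and the recursions $P_j=\exp(j^{4j}(\log Q_1)^{j-1}\log P_1)$, $Q_j=\exp(j^{4j+2}(\log Q_1)^{j})$ give $\log P_j/\log Q_j=\log P_1/(j^2\log Q_1)\asymp j^{-2}\log\log H/\log H$, whence $\sum_{j\ge 1}\log P_j/\log Q_j\ll\log\log H/\log H$. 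Combining the three displays gives the bound on $\#\{m\le 2N:\ m\notin\mathcal{S}\}$ and hence on the left-hand side of the lemma. The only points needing real care are (a) that the sieve estimate above is legitimate — this is exactly why $\mathcal{S}$ is built from primes only up to $\exp(\sqrt{\log N_0})$, keeping the sifting level far below $N$ — and (b) keeping the factor $s/\varphi(s)$ honest when removing the constraint $p\nmid s$; everything else is routine bookkeeping. The argument parallels the treatment of the analogous ``sparse'' contribution in Matom\"{a}ki--Radziwi{\l}{\l}\ \cite{KM}.
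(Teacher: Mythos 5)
Your proposal is correct and matches the paper's own argument essentially step for step: discard $\beta$ by the triangle inequality, swap the order of summation to reduce to $Hs\cdot\#\{m\le N+Hs:\,m\notin\mathcal{S}\}$, apply the union bound over $j$ together with the fundamental lemma of sieve theory (keeping the $s/\varphi(s)$ factor from the primes dividing $s$), and sum $\log P_j/\log Q_j$ over $j$ using the recursion for $P_j,Q_j$. The only cosmetic difference is that you spell out the convergence of $\sum_j \log P_j/\log Q_j$ and the negligibility of the sieve error term, which the paper leaves implicit.
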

\begin{proof}
By the fundamental sieve, for $j=1,\ldots,J$, we have
\begin{equation}\label{fundamental sieve}
\sum_{\substack{n\leq X\\ \big(n,~\prod_{{P_{j}< p\leq Q_{j}, p\nmid s}}~p\big)=1}}1\ll X\frac{\log P_{j}}{\log Q_{j}}\prod_{\substack{P_{j}<p\leq Q_{j}\\p|s }}(1-1/p)^{-1}+O(sQ_{j}^2).
\end{equation}
So by the above and choices (\ref{choice of pandq}), (\ref{1/17}),
\begin{align*}
&\sum_{c=1}^{qs} \sum_{n=1}^{N}
\sum_{t=1}^{W^2}
\left|\sum_{\substack{h\in I_t \\ n+h\equiv c(qs)\\n+h\notin \mathcal{S} }} \beta(n+h)\right|\leq \sum_{n=1}^{N}\sum_{\substack{m=n\\m\notin \mathcal{S}}}^{n+Hs}1=\sum_{\substack{m=1\\m\notin \mathcal{S}}}^{N+Hs}\sum_{n=m-Hs}^{m}1\\
&\leq Hs\sum_{j=1}^{J}\sum_{\substack{n\leq N+Hs\\ (n,~\prod_{{P_{j}< p\leq Q_{j},p\nmid s}}~p)=1}}1
\ll NHs\frac{\log P_{1}}{\log Q_{1}}\frac{s}{\varphi(s)}\\
&\ll NHs\frac{s}{\varphi(s)}\frac{\log\log H}{\log H}
\end{align*}
as claimed.
\end{proof}
Hence, to prove Theorem \ref{the estimate of major arc part}, we are left to bound
\begin{equation}\label{11/8evening}
\sum_{c=1}^{qs} \sum_{n=1}^{N}
\sum_{t=1}^{W^2}
\left|\sum_{\substack{h\in I_t \\ n+h\equiv c(qs)\\n+h\in \mathcal{S} }} \beta(n+h)\right|.
\end{equation}
Let $\beta_{1}$ denote the completely multiplicative function defined by $\beta_{1}(p) = \beta(p)$ for all prime numbers $p$. We first bound the above in terms of $\beta_{1}$ rather than $\beta$ by a standard technique (see, e.g., \cite[p2177-p2178]{KMT17}). The Dirichlet inverse of $\beta_{1}$ is $\mu  \beta_{1}$. Then we can write $\beta = \beta_{1}*\alpha$, where $*$ is the Dirichlet convolution and $\alpha = \beta * \mu \beta_{1}$. Observe that $\alpha(n)$ is multiplicative and for all primes $p$, $\alpha(p)=0$ and $|\alpha(p^{j})|\leq 2$ for $j\geq 2$. So from the Euler product, we see that $\sum_{n=1}^{\infty} |\alpha(n)| n^{-\frac{3}{4}} =O(1)$. Note that $W<P_{1}$. So $1_{\mathcal{S}}(ab)=1_{\mathcal{S}}(b)$ when $a\leq W$ and $b\leq (N+Hs)/a$.
Then
\begin{align}
(\ref{11/8evening})=&\sum_{c=1}^{qs} \sum_{n=1}^{N}
\sum_{t=1}^{W^2} \sum_{a\leq W}|\alpha(a)|
\left|\sum_{\substack{b\in \mathbb{N}\\ab\equiv c(qs) \\ ab\in n+I_t}}1_{\mathcal{S}}(b)\beta_{1}(b) \right|+
\sum_{c=1}^{qs} \sum_{n=1}^{N}
\sum_{t=1}^{W^2} \sum_{a> W}|\alpha(a)|
\left|\sum_{\substack{b\in \mathbb{N}\\ ab\equiv c(qs) \\ ab\in n+I_t}}1_{\mathcal{S}}(ab) \beta_{1}(b) \right| \nonumber\\
=&\sum_{c=1}^{qs} \sum_{n=1}^{N}
\sum_{t=1}^{W^2} \sum_{a\leq W}
|\alpha(a)|
\left|\sum_{\substack{b\in \mathbb{N}, ab\equiv c(qs) \\ ab\in n+I_t}}1_{\mathcal{S}}(b)\beta_{1}(b) \right|
+O\left(
\sum_{n=1}^{N}
\sum_{t=1}^{W^2} \sum_{a> W}
|\alpha(a)|
\Bigg(\sum_{b\in \mathbb{N}, ab\in n+I_t} 1 \Bigg) \right) \nonumber\\
=&\sum_{c=1}^{qs} \sum_{n=1}^{N}
\sum_{t=1}^{W^2} \sum_{a\leq W}
|\alpha(a)|
\left|\sum_{\substack{b\in \mathbb{N},ab\equiv c(qs) \\ ab\in n+I_t}}1_{\mathcal{S}}(b)\beta_{1}(b) \right|
+O\Big( W^{-1/4} NHs \Big).\label{evening2}
\end{align}
Now the estimate of formula (\ref{afternoon2}) is reduced to estimating
\begin{equation}\label{10/29noonformula1}
\sum_{c=1}^{qs} \sum_{n=1}^{N}
\sum_{t=1}^{W^2} \sum_{a\leq W}
|\alpha(a)|
\left|\sum_{\substack{b\in \mathbb{N},ab\equiv c(qs)\\ ab\in n+I_t}}1_{\mathcal{S}}(b)\beta_{1}(b) \right|.
\end{equation}
We split the summation over $b$ according to gcd$(a,qs)$. Note that $|\alpha(n)|\leq \tau(n)$ (the divisor function) for all $n\in \mathbb{N}$.
Then
\begin{align}\label{10/29noonformula2}
\begin{aligned}
(\ref{10/29noonformula1})=&\sum_{t\leq W^2} \sum_{\substack{e\leq W\\e\mid qs}} \sum_{\substack{a\leq W/e \\ (a,qs/e) = 1}}|\alpha(ae)|\sum_{{1\leq c \leq qs/e}} \sum_{n\leq N}  \left|\sum_{\substack{b\in n/ae + I_t/ae \\ b\equiv c(qs/e) }}1_{\mathcal{S}}(b)\beta_{1}(b) \right|\\
=&\sum_{t\leq W^2} \sum_{\substack{e\leq W\\ e\mid qs}} \sum_{\substack{a\leq W/e \\ (a,qs/e) = 1}} |\alpha(ae)|ae \sum_{{1\leq c \leq qs/e}} \sum_{n\leq N/ae}  \left|\sum_{\substack{b\in n + I_t/ae \\ b\equiv c(qs/e) }}1_{\mathcal{S}}(b)\beta_{1}(b) \right|+O\Big(
NsW^3q(\log W)^2\Big).
\end{aligned}
\end{align}
To give an upper bound for (\ref{10/29noonformula2}), we apply the following estimate on sums of multiplicative functions in short arithmetic progressions.
\begin{lemma}\label{a lemma need to be used}
Given $X$ with $\sqrt{N}\leq X\leq N/2$ and $1\leq s\leq (\log X)^{1/32}$. Let $3\leq H\leq X/s$. Let $f(n)$ be a multiplicative function with $|f(n)|\leq 1$ for all $n\geq 1$. Then
\begin{align}\label{average on multiplicative function}
\sum_{\substack{a=1\\(a,s)=1}}^{s}\sum_{x=X}^{2X}\Bigg|\sum_{\substack{n=x\\n\equiv a(mod~s)}}^{x+Hs}1_{\mathcal{S}}(n)f(n)\Bigg|^2 \ll&H^2X\varphi(s)\Bigg(\frac{(\log Q_{1})^{1/3}}{P_{1}^{1/6-\eta}}+2^{2J}\frac{M(f;X,2X,s)+1}{\exp\Big(M(f;X,2X,s)\Big)}\Bigg)\nonumber \\
&+H^2X\varphi(s)\frac{1}{(\log X)^{1/65}},
\end{align}
where $\eta=1/150$, $P_{1},Q_{1}$ and $\mathcal{S}$, $J$ are given in (\ref{choice of pandq}) and (\ref{definition of dense set1}), respectively.
\end{lemma}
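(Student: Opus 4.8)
The right-hand side of (\ref{average on multiplicative function}) is smaller than the trivial bound $\ll H^2X\varphi(s)$ (each of the $\varphi(s)$ reduced classes $a$ and the $\sim X$ values of $x$ contributes a sum of at most $H+1$ terms, each of modulus $\le 1$), so the lemma is a genuine cancellation statement, and the plan is to obtain it by running the Matom\"aki--Radziwi{\l}{\l} argument for multiplicative functions in short intervals (\cite{KM}, and in the almost-all-progressions form \cite[Theorem 1.6]{KMT}) while carrying the modulus $s$ through every step. First I would expand the square and use orthogonality of Dirichlet characters modulo $s$. Because $[x,x+Hs]$ meets each reduced residue class modulo $s$ in $H$ or $H+1$ integers, the inner sum over $n\equiv a\,(s)$ is a genuine short sum; summing $|\,\cdot\,|^2$ over the $\varphi(s)$ reduced classes $a$ turns the left-hand side into $\frac{1}{\varphi(s)}\sum_{\chi\,(\mathrm{mod}~s)}\sum_{x=X}^{2X}\big|\sum_{x\le n\le x+Hs}1_{\mathcal{S}}(n)f(n)\chi(n)\big|^2$ up to harmless error. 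This is where the factor $\varphi(s)$ (rather than $s$) must be produced, so one must check the character sums are clean; it then suffices to bound, for each $\chi\,(\mathrm{mod}~s)$, the inner $L^2$ average over $x\in[X,2X]$ of the short sums of $1_{\mathcal{S}}(n)f(n)\chi(n)$.

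Next I would pass to Dirichlet polynomials in the standard way: the short-interval $L^2$ average $\int_X^{2X}\big|\sum_{x\le n\le x+Hs}a_n\big|^2\,dx$ is controlled, up to acceptable error, by $(Hs/X)^2X\int_{|t|\le X/(Hs)}|F(1+it)|^2\,dt$ for $F(w)=\sum_n 1_{\mathcal{S}}(n)f(n)\chi(n)n^{-w}$, as in \cite{KM}. The structure of $\mathcal{S}$ is then exploited exactly as there: every $n\in\mathcal{S}$ has a prime factor coprime to $s$ in each range $[P_j,Q_j]$, $j=1,\dots,J$, so a Ram\'are-type identity writes $1_{\mathcal{S}}(n)f(n)\chi(n)$ as a weighted sum over $j$ of its factorizations $n=pm$ with $p\in\mathcal{P}_j$, hence expresses $F$ as a sum over $j$ of products (a short ``prime polynomial'' supported on $\mathcal{P}_j$)$\times$(a longer companion polynomial). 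One then splits the integral over $|t|\le X/(Hs)$. Near $t=0$, Hal\'asz's theorem bounds the contribution in terms of the pretentious distance of $f\chi$ to $\widetilde{\chi}(n)n^{it}$; since $\mathbb{D}_s$ ignores primes dividing $s$, twisting $f$ by $\chi\,(\mathrm{mod}~s)$ leaves this distance unchanged, and $M(f;X,2X,s)$ is by definition the infimum of such distances over all moduli $\le s$, characters, and $|t|\le 2X$, so every genuinely exceptional modulus up to $s$ is absorbed; this range contributes the $2^{2J}(M(f;X,2X,s)+1)\exp(-M(f;X,2X,s))$ term. Away from $t=0$, the prime polynomial supported on $\mathcal{P}_j$ is pointwise small off a small-measure set, and combined with the mean value theorem and large-values estimates for the companion polynomial this yields the remaining terms $(\log Q_1)^{1/3}P_1^{-1/6+\eta}$ and $(\log X)^{-1/65}$; the factor $2^{2J}$ records the combinatorial loss from the $J$ scales and the Ram\'are weights, and is $(\log X)^{o(1)}$ since $Q_J\le\exp(\sqrt{\log N_0})$ forces $J\ll \log\log X/\log\log\log X$.

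The remaining work is to check that the auxiliary parameters do not interfere, and this uniform bookkeeping in $s$ is the step I expect to be the main obstacle. Deleting the primes dividing $s$ from each $\mathcal{P}_j$ costs nothing because $s\le(\log X)^{1/32}\ll P_1$, so only a negligible part of each $\mathcal{P}_j$ is removed and $\mathcal{S}$ keeps the density the argument needs; the sieve errors of type $O(sQ_j^2)$ that appear in bounds like (\ref{fundamental sieve}) remain negligible under $s\le(\log X)^{1/32}$, $H\le X/s$ and $Q_J\le\exp(\sqrt{\log N_0})$ with $N_0=\sqrt N$; and, crucially, the mean value theorem, the large-values estimate, and the Hal\'asz bound must all be quoted in their character-twisted forms so as to produce exactly the $\varphi(s)$ normalization and nothing worse, with the pretentiousness analysis carried out throughout via the $s$-twisted distance $\mathbb{D}_s$ (sum restricted to $p\nmid s$) so that the exceptional modulus is handled rather than lost. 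Up to that bookkeeping the proof is a faithful transcription of the Matom\"aki--Radziwi{\l}{\l} machinery, which is why the right-hand side of (\ref{average on multiplicative function}) mirrors the shape of \cite[Theorem 1.6]{KMT}.
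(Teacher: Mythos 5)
Your overall strategy coincides with the paper's: the paper's proof consists of the single remark that the argument is ``almost identical'' to that of \cite[Theorem 1.6]{KMT} (see also \cite[Proposition 6.1]{Wei2}), i.e.\ precisely the character-orthogonality / Dirichlet-polynomial / Ram\'are-identity / Hal\'asz pipeline you describe, with the modulus $s$ carried through and the distance $\mathbb{D}_s$ ignoring the primes dividing $s$.

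However, you gloss over the one point that the paper singles out as the actual new content of the proof, and your explanation of the factor $2^{2J}$ identifies the wrong mechanism. The function $1_{\mathcal{S}}$ is \emph{not} multiplicative: membership in $\mathcal{S}$ requires a prime factor in each of the $J$ disjoint ranges $\mathcal{P}_j$ simultaneously. Consequently the Hal\'asz-type inequality cannot be applied directly to the Dirichlet polynomial $\sum_{X\le n\le 2X}1_{\mathcal{S}}(n)g(n)\chi(n)n^{-1-it}$ arising near $t=0$ in your outline. The paper's fix is the inclusion--exclusion
\[
1_{\mathcal{S}}(n)=\sum_{\mathcal{J}\subseteq\{1,\dots,J\}}(-1)^{|\mathcal{J}|}\,1_{C_{\mathcal{J}}}(n),
\qquad
C_{\mathcal{J}}=\Big\{n:\big(n,\textstyle\prod_{p\in\cup_{j\in\mathcal{J}}\mathcal{P}_{j}}p\big)=1\Big\},
\]
valid because the $\mathcal{P}_j$ are disjoint; each $1_{C_{\mathcal{J}}}$ \emph{is} multiplicative (indicator of coprimality to a fixed integer), so Hal\'asz applies to each of the $2^{J}$ summands, and squaring (Cauchy--Schwarz over the $2^{J}$ terms) is exactly what produces the $2^{2J}$ in (\ref{average on multiplicative function}). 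Attributing $2^{2J}$ to ``the combinatorial loss from the $J$ scales and the Ram\'are weights'' misstates the source --- the Ram\'are identity and the splitting into $J$ ranges cost at most polynomial-in-$J$ losses --- and without the decomposition above the Hal\'asz step in your sketch does not go through as written. The remaining bookkeeping you flag (the $\varphi(s)$ normalization from orthogonality over reduced classes, the harmlessness of deleting $p\mid s$ from each $\mathcal{P}_j$ since $s\le(\log X)^{1/32}\ll P_1$, and the absorption of exceptional moduli into $M(f;X,2X,s)$) is consistent with what the cited argument requires.
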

\begin{proof}
The proof is almost identical to the proof of \cite[Theorem 1.6]{KMT} (see also \cite[Proposition 6.1]{Wei2}), in which it gives an upper bound of left-hand side of (\ref{average on multiplicative function}) without restricting $n$ to $\mathcal{S}$. The difference is that when we apply the Hal\'{a}sz-type inequality to a Dirichlet polynomial of the form $\sum_{X\leq n\leq 2 X}\frac{1_{\mathcal{S}}(n)g(n)\chi(n)}{n^{1+it}}$ for some function $g(n)$, we should rewrite
\[\sum_{X\leq n\leq 2X}\frac{1_{\mathcal{S}}(n)g(n)\chi(n)}{n^{1+it}}=\sum_{\mathcal{J}\subseteq \{1,\ldots,J\}}(-1)^{|\mathcal{J}|}\sum_{X\leq n\leq 2X}\frac{1_{C_{\mathcal{J}}}(n)g(n)\chi(n)}{n^{1+it}},\]
where $C_{\mathcal{J}}$ is the set $\{n\in \mathbb{N}:(n,\prod_{p\in \cup_{j\in \mathcal{J}}\mathcal{P}_{j}}p)=1\}$ and $1_{C_{\mathcal{J}}}(n)$ is a multiplicative function (see \cite[Lemma 5]{KM}).
\end{proof}
By the above result, we have the following result.
\begin{corollary}\label{a corollary need to be used}
Let $s\geq 1$. Let $N_{1}$ be large enough with $N^{2/3}\leq N_{1}\leq N$ and $s\leq \frac{3}{4}(\log N_{1})^{1/32}$. Let $\beta_{1}(n)$ be a completely multiplicative function with $|\beta_{1}(n)|\leq 1$ for all $n\geq 1$. Then for any $H$ with $3\leq H\leq \frac{N_{1}}{s(\log N_{1})^{1/3}}$,
\begin{align}\label{average on multiplicative function2}
\begin{aligned}
 \sum_{a=1}^{s}\sum_{x=1}^{N_{1}}\Bigg|\sum_{\substack{n=x\\n\equiv a(mod~s)}}^{x+Hs}1_{\mathcal{S}}(n)\beta_{1}(n)\Bigg|\ll& N_{1}Hs\Big(\frac{(\log Q_{1})^{1/6}}{P_{1}^{1/12-\eta/2}}+\frac{1}{(\log N_{1})^{1/200}} \Big)\\
  &+N_{1}Hs2^{J}\Bigg(\frac{M(\beta_{1};N_{1}/(\log N_{1})^{2/3},N_{1},s)+1}{\exp\Big(M(\beta_{1};N_{1}/(\log N_{1})^{2/3},N_{1},s)\Big)}\Bigg)^{1/2}.
\end{aligned}
\end{align}
\end{corollary}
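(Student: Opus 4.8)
The plan is to deduce the $L^{1}$-type estimate (\ref{average on multiplicative function2}) from the $L^{2}$-type estimate (\ref{average on multiplicative function}) of Lemma \ref{a lemma need to be used} in two moves: first reduce the residues $a$ with $\gcd(a,s)>1$ to coprime residues on a smaller modulus, then combine Cauchy--Schwarz with a dyadic decomposition of the outer variable.

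For the first move I would write $\sum_{a=1}^{s}=\sum_{d\mid s}\sum_{\gcd(a,s)=d}$. If $\gcd(a,s)=d$ then $d\mid n$ whenever $n\equiv a\,(s)$, so writing $n=dn'$ puts $n'$ in a residue class coprime to $\sigma:=s/d$, with $\beta_{1}(n)=\beta_{1}(d)\beta_{1}(n')$ by complete multiplicativity. Since every prime dividing $d$ divides $s$ (hence lies in no $\mathcal P_{j}$) and $dn'\le N_{1}+Hs\le 2N$, one has $1_{\mathcal S}(dn')=1_{\mathcal S}(n')$; also the dilated window has length a multiple of $\sigma$, and as the outer variable runs over $d$ consecutive integers the set of relevant $n'$ is unchanged except at multiples of $d$, which after telescoping costs only an $O(H)$ error per residue. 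Thus the $d$-part is controlled, up to a factor $d$, by the coprime sum on modulus $\sigma$ over $[1,N_{1}/d]$; summing over the $\varphi(\sigma)$ admissible residues and over $d\mid s$ and using the identity $\sum_{d\mid s}\varphi(s/d)=s$ is exactly what will reconstitute the factor $N_{1}Hs$ at the end.

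For the second move, for each $d$ I would apply Cauchy--Schwarz, $\big(\sum_{a}\sum_{j\le N_{1}/d}|\,\cdot\,|\big)^{2}\le\varphi(\sigma)\tfrac{N_{1}}{d}\sum_{a}\sum_{j\le N_{1}/d}|\,\cdot\,|^{2}$, then split $[1,N_{1}/d]$ into $O(\log N_{1})$ dyadic blocks $[Y,2Y]$ (arranged so the top block is $[\tfrac{N_{1}}{2d},\tfrac{N_{1}}{d}]$) and apply Lemma \ref{a lemma need to be used} on each block with $H\le Y/\sigma$, keeping the global $N$ so that $\mathcal S$ and $J$ remain the fixed ones; the hypothesis $\sigma\le s\le\tfrac34(\log N_{1})^{1/32}\le(\log Y)^{1/32}$ holds because $Y\ge\sqrt N$. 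The $O(1)$ blocks with $H>Y/\sigma$, and the tail $Y<\sqrt N$, I would bound trivially; since $Hs\le N_{1}/(\log N_{1})^{1/3}$ these contribute only $\ll(\log N_{1})^{-1/3}$ to the relative error. A geometric sum over $Y$ (dominated by the top scale), the elementary inequality $\sqrt{x+y}\le\sqrt x+\sqrt y$, multiplying back by $d$, and summing over $d\mid s$ by the identity above should then produce (\ref{average on multiplicative function2}), provided I can bound every $M(\beta_{1};Y,2Y,\sigma)$ occurring in this process from below by the single reference quantity $\mathcal M:=M\big(\beta_{1};N_{1}/(\log N_{1})^{2/3},N_{1},s\big)$.

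This last point is where the real work lies and is the step I expect to be the main obstacle: one must juggle the three monotonicities of the Granville--Soundararajan distance function — $\mathbb D_{\sigma'}(\beta_{1}\chi,n^{it};\cdot)^{2}$ is non-decreasing in the prime cutoff, while $M(\beta_{1};\cdot,\cdot,\cdot)$ is non-increasing in the bound on $|t|$ and in the bound on the modulus — and apply each in the correct direction, uniformly over all divisors $d\mid s$ and all dyadic scales. For $Y\ge N_{1}/(\log N_{1})^{2/3}$ these give $M(\beta_{1};Y,2Y,\sigma)\ge\mathcal M$ (using $\sigma\le s$, using $2Y\le N_{1}$ for all of the relevant blocks by the placement of the dyadic ranges, and using $Y\ge N_{1}/(\log N_{1})^{2/3}$ together with $d\le s\le(\log N_{1})^{2/3}$ to compare prime cutoffs), hence $\tfrac{M(\beta_{1};Y,2Y,\sigma)+1}{\exp(M(\beta_{1};Y,2Y,\sigma))}\le\tfrac{\mathcal M+1}{e^{\mathcal M}}$ since $t\mapsto(t+1)e^{-t}$ is decreasing. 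For the genuinely small scales $Y<N_{1}/(\log N_{1})^{2/3}$ no such lower bound is available, so I would only use $\tfrac{M+1}{e^{M}}\le 1$; those blocks carry total weight $\ll N_{1}/(\log N_{1})^{2/3}$, so after the two reductions they contribute, relative to the main term $N_{1}Hs$, only a quantity of size $\ll 2^{J}s^{1/2}(\log N_{1})^{-1/3}$, which is absorbed into the $(\log N_{1})^{-1/200}$ term because $2^{J}=(\log N)^{o(1)}$ (the same mechanism responsible for the $(\log N)^{o(1)}$ factors in Theorem \ref{the estimate of general multiplicative functions}) and $s\le(\log N_{1})^{1/32}$. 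Checking that all of Lemma \ref{a lemma need to be used}'s hypotheses survive the decomposition, and that these distance-function comparisons hold uniformly, is the delicate part; the rest is routine bookkeeping.
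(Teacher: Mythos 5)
Your proof is correct and follows essentially the same route as the paper's: reduce to coprime residues by writing $n=dn'$ for $d\mid s$ (using complete multiplicativity of $\beta_{1}$ and $1_{\mathcal S}(dn')=1_{\mathcal S}(n')$), apply Cauchy--Schwarz, feed Lemma \ref{a lemma need to be used} over dyadic blocks, and use the monotonicity of $M(\cdot;X,T,Y)$ in each argument together with the monotonicity of $(x+1)e^{-x}$ to replace every local $M(\beta_{1};Y,2Y,s/d)$ by the single reference quantity. The only cosmetic differences are that the paper applies Cauchy--Schwarz before the $d\mid s$ change of variables, discards the range $x<N_{1}/(\log N_{1})^{1/3}$ trivially up front rather than through a dyadic split, and leaves the dyadic decomposition implicit, whereas you make it explicit and track the small-scale and $H>Y/\sigma$ contributions separately.
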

\begin{proof}
By the Cauchy-Schwarz inequality,
\begin{equation}\label{10/29afternoon9}
\sum_{a=1}^{s}\sum_{x=N_{1}/(\log N_{1})^{1/3}}^{N_{1}}\Bigg|\sum_{\substack{n=x\\n\equiv a(mod~s)}}^{x+Hs}1_{\mathcal{S}}(n)\beta_{1}(n)\Bigg|\leq (sN_{1})^{\frac{1}{2}}\Bigg(\sum_{a=1}^{s}\sum_{x=N_{1}/(\log N_{1})^{1/3}}^{N_{1}}\Bigg|\sum_{\substack{n=x\\n\equiv a(mod~s)}}^{x+Hs}1_{\mathcal{S}}(n)\beta_{1}(n)\Bigg|^2\Bigg)^{\frac{1}{2}}.
\end{equation}
Note that $M(f;X,T,Y)$ defined as in equation (\ref{definition of distance function}) is increasing in $X$ and decreasing both in $T$ and in $Y$, and $(x+1)e^{-x}$ is a decreasing function of $x$ when $x\in [0,\infty)$. Then by Lemma \ref{a lemma need to be used},
\begin{align*}
&\sum_{a=1}^{s}\sum_{x=N_{1}/(\log N_{1})^{1/3}}^{N_{1}}\Bigg|\sum_{\substack{n=x\\n\equiv a(mod~s)}}^{x+Hs}1_{\mathcal{S}}(n)\beta_{1}(n)\Bigg|^2\\
&=\sum_{d|s}\sum_{\substack{a=1\\(a,s/d)=1}}^{s/d}\sum_{x=N_{1}/(\log N_{1})^{1/3}}^{N_{1}}\Bigg|\sum_{\substack{n=x/d\\n\equiv a(mod~s/d)}}^{x/d+Hs/d}1_{\mathcal{S}}(dn)\beta_{1}(dn)\Bigg|^2\\
&\leq\sum_{d|s}d\sum_{\substack{a=1\\(a,s/d)=1}}^{s/d}\sum_{x=N_{1}/(d(\log N_{1})^{1/3})}^{N_{1}/d}\Bigg|\sum_{\substack{n=x\\n\equiv a(mod~s/d)}}^{x+Hs/d}1_{\mathcal{S}}(n)\beta_{1}(n)\Bigg|^2+O(N_{1}s)\\
&\ll H^2N_{1}\Big(\sum_{d|s}\varphi(\frac{s}{d})\Big)\Bigg(\frac{(\log Q_{1})^{1/3}}{P_{1}^{1/6-\eta}}+\frac{1}{(\log N_{1})^{1/65}}+2^{2J}\frac{M(\beta_{1};N_{1}/(\log N_{1})^{2/3},N_{1},s)+1}{\exp\Big(M(\beta_{1};N_{1}/(\log N_{1})^{2/3},N_{1},s)\Big)}\Bigg)\\
&\ll  H^2N_{1}s\Bigg(\frac{(\log Q_{1})^{1/3}}{P_{1}^{1/6-\eta}}+\frac{1}{(\log N_{1})^{1/65}}+2^{2J}\frac{M(\beta_{1};N_{1}/(\log N_{1})^{2/3},N_{1},s)+1}{\exp\Big(M(\beta_{1};N_{1}/(\log N_{1})^{2/3},N_{1},s)\Big)}\Bigg).
\end{align*}
In the first inequality above, we used the facts that $\beta_{1}(dn)=\beta_{1}(d)\beta_{1}(n)$ by the complete multiplicity of $\beta_{1}$ and $1_{\mathcal{S}}(dn)=1_{\mathcal{S}}(n)$ when $n\leq \frac{2N}{d}$ and $d|s$ by the definition of $\mathcal{S}$. So by formula (\ref{10/29afternoon9}), we obtain formula (\ref{average on multiplicative function2}).
\end{proof}
According to the previous analysis and Corollary \ref{a corollary need to be used}, we now prove Theorem \ref{the estimate of major arc part}.
\begin{proof}[Proof of Theorem \ref{the estimate of major arc part}]
Note that $q\leq W$ and $W=\lfloor \log H\rfloor$ (i.e., formula (\ref{parameter choice})). So for $N,H$ sufficiently large, $s\log H\leq \frac{1}{2}(\log N)^{1/32}$ implies that $qs\leq \frac{3}{4}\big(\log (N/W)\big)^{1/32}$. Observe that $\frac{(\log Q_{1})^{1/6}}{P_{1}^{1/12-1/300}}$ in (\ref{average on multiplicative function2}) is $O\Big(\frac{1}{(\log H)^{400}}\Big)$ by the choices (\ref{choice of pandq}). Applying Corollary \ref{a corollary need to be used} to formula (\ref{10/29noonformula2}), we have that for $N$ large enough with $s\log H\leq \frac{1}{2}(\log N)^{1/32}$,
\begin{align*}
&\sum_{t\leq W^2} \sum_{\substack{e\leq W\\ e\mid qs}} \sum_{\substack{a\leq W/e \\ (a,qs/e) = 1}}|\alpha(ae)| ae \sum_{{1\leq c \leq qs/e}} \sum_{n\leq N/ae}  \Bigg|\sum_{\substack{b\in n + I_t/ae \\ b\equiv c(qs/e) }}1_{\mathcal{S}}(b)\beta_{1}(b) \Bigg|\\
\ll&\sum_{t\leq W^2} \sum_{\substack{e\leq W\\ e\mid qs}} \sum_{\substack{a\leq W/e \\ (a,qs/e) = 1}} |\alpha(ae)|ae(\frac{qs}{e}\frac{N}{ae}\frac{W^{-2}Hs}{aqs})\Bigg(\frac{1}{(\log H)^{400}}+\frac{1}{(\log N)^{1/200}}\\
&2^{J}\Big(M(\beta_{1};N/\log N,2N,qs)+1\Big)^{1/2}\exp\Big(-\frac{1}{2}M(\beta_{1};N/\log N,2N,qs)\Big)\Bigg)\\
\ll&NHs\Bigg(\sum_{m\leq W}\frac{|\alpha(m)|\tau(m)}{m}\Bigg)\Bigg(\frac{1}{(\log H)^{400}}+\frac{1}{(\log N)^{1/200}}\\
&+2^{J}\frac{M(\beta_{1};N/\log N,2N,(\log N)^{1/32})+1}{\exp\Big(\frac{1}{2}M(\beta_{1};N/\log N,2N,(\log N)^{1/32})\Big)}\Bigg)\\
\ll&NHs\Bigg(\frac{1}{(\log H)^{400}}+\frac{1}{(\log N)^{1/200}}+
(\log N)^{o(1)}\frac{M(\beta_{1};N/\log N,2N,(\log N)^{1/32})+1}{\exp\Big(\frac{1}{2}M(\beta_{1};N/\log N,2N,(\log N)^{1/32})\Big)}\Bigg).
\end{align*}
In the last inequality we used the estimate $2^{J}\ll (\log N)^{o(1)}$ by the definition of $J$ after equation (\ref{1/17}), and $\sum_{m=1}^{\infty}\frac{|\alpha(m)|\tau(m)}{m}=O(1)$ by the Euler product and the fact that $M(\beta;X,T,Y)=M(\beta_{1};X,T,Y)$ by the definition of $\beta_{1}$ after formula (\ref{11/8evening}). By the above and Lemma \ref{an estimate needed in the major arc}, together with formulas (\ref{evening}), (\ref{evening2}), (\ref{10/29noonformula2}), we obtain the conclusion stated in this theorem.
\end{proof}
\subsection{The estimate of minor arc case.} For the minor arc estimate, we shall prove the following result.
\begin{theorem}\label{minor arc estimate}
Let $s\geq 1$, $H\geq 3$ and $N\geq Hs$ be integers. Let $\beta:\mathbb{N}\rightarrow \mathbb{C}$ be a multiplicative function with $|\beta(n)|\leq 1$ for each $n\in \mathbb{N}$. Then we have
\[(\ref{afternoon3})=\sum_{a=1}^s \sum_{n=1}^{N} \sum_{t=1}^{W^2} \sum_{j=0}^{q-1}\sum_{\substack{h\in I_t \\ n+h\equiv js+a (qs)}}
\beta(n+h)\widetilde{F}_{a,n,j,t}(g'(n,h)\Gamma')\ll NHs\frac{s}{\varphi(s)}\frac{\log\log H}{\log H}.\]
\end{theorem}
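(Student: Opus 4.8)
The plan is to reduce the estimation of $(\ref{afternoon3})$ to the genuinely minor arc case $G'=\mathbb{R}$ and there to run a variant of He--Wang's short-interval minor arc argument \cite[Section 6]{HW} that tolerates the progression $\mathcal{A}_{a}$ of modulus $s$ and the finer modulus $qs$ coming from the periodicity of $\gamma$. If $G'=\{0\}$ then $G'/\Gamma'$ is a point, so the zero-mean function $\widetilde{F}_{a,n,j,t}$ vanishes identically and $(\ref{afternoon3})=0$; assume henceforth $G'=\mathbb{R}$, $\Gamma'=\mathbb{Z}$. I would then discard the terms not controlled by property (ii): the contribution of $n\notin\mathcal{N}$, and, for $n\in\mathcal{N}$, of the at most $W^{-B/2}s$ values of $a$ for which $\{g'(n,h)\Gamma'\}_{h\in\mathcal{A}_{a}}$ is not totally $W^{-B}$-equidistributed. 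Since $|\beta|\le 1$ and $\|\widetilde{F}_{a,n,j,t}\|_{Lip}\le 2\|F\|_{Lip}\ll 1$, the trivial bound gives $\ll W^{-B/2}NHs$ for each of these, which (as $W=\lfloor\log H\rfloor$, $B=100C_{0}^{2}$) is negligible against the target. For the remaining $n\in\mathcal{N}$ and ``good'' $a$ it then suffices to bound $\sum_{t=1}^{W^{2}}\sum_{j=0}^{q-1}\sum_{h\in I_{t},\ n+h\equiv js+a\,(qs)}\beta(n+h)\widetilde{F}_{a,n,j,t}(g'(n,h)\Gamma')$; here, for each pair $(t,j)$, the relevant $h$ run over a progression of common difference $qs$ whose length is $\gg W^{-O(1)}$ times that of $\mathcal{A}_{a}$, so by total $W^{-B}$-equidistribution and the size of $B$ the sequence $\{g'(n,h)\Gamma'\}$ is $W^{-B}$-equidistributed in $\mathbb{R}/\mathbb{Z}$ along it.

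Next I would restrict $n+h$ to the structured dense set $\mathcal{S}$ of Subsection \ref{the estimate of major arc case}: the contribution of $n+h\notin\mathcal{S}$ is $\ll NHs\frac{s}{\varphi(s)}\frac{\log\log H}{\log H}$ by the same sieve computation as in Lemma \ref{an estimate needed in the major arc}, and this is precisely the dominant term of the asserted bound. On $\mathcal{S}$ the obstruction is that the multiplicative weight $\beta(n+h)$ forbids a direct use of equidistribution. To get around this I would expand $\widetilde{F}_{a,n,j,t}$ in a Fourier series on $\mathbb{R}/\mathbb{Z}$ and, after a Jackson--Fej\'er approximation, reduce to estimating $\sum_{h,\ n+h\in\mathcal{S}}\beta(n+h)\,e\big(k\,g'(n,h)\big)$ for nonzero integers $k$ with $|k|\ll W^{O(B)}$, the truncation error costing $O(W^{-1})$ per piece. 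By Remark \ref{form of g'} we may write $g'(n,h)=f'(n+h)$, so this is $\beta$ twisted by the polynomial phase $e(kf'(\cdot))$ on a short arithmetic progression inside $\mathcal{S}$; moreover the contrapositive of Lemma \ref{Weyl rule} applied to the equidistribution above rules out, for every such $k$, a small-denominator rational approximation to the coefficients of $kf'$, so $e(kf'(\cdot))$ is a genuine minor arc phase.

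It remains to bound such twisted sums, and here I would follow He--Wang \cite[Section 6]{HW}: using the prime factor of $n+h$ guaranteed in $\mathcal{P}_{1}$ (via Ramar\'e's identity and $\beta(pm)=\beta(p)\beta(m)$ for $p\nmid m$), rewrite the sum as a Type~I bilinear sum over $p\in\mathcal{P}_{1}$ whose inner sum runs over the free variable $m$ with $pm$ in a short progression of length $\gg H/(p(\log H)^{O(1)})$ and carries the equidistributed phase $e(kf'(pm))$; this inner sum is estimated by Weyl's inequality / van der Corput with a power-of-$\log H$ saving. Summing the outcome over $p$, $m$, $k$, $t$, $j$, over $n\in\mathcal{N}$ and over good $a$ keeps the total negligible compared with $NHs\frac{s}{\varphi(s)}\frac{\log\log H}{\log H}$, which is therefore the dominant contribution and yields the theorem.

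The main obstacle is this final step. One must verify that He--Wang's short-interval machinery goes through with the progression $\mathcal{A}_{a}$ of modulus $s$ (and the finer modulus $qs$) present, that the substitution $h\mapsto(p,m)$ with $n+h=pm$, $p\in\mathcal{P}_{1}$, leaves $m\mapsto f'(pm)$ equidistributed on the resulting short progression in $m$ and does not create resonances for the admissible $p$, and that the exponential-sum savings are uniform over $|k|\ll W^{O(B)}$ and over the progression; all of this must be carried out while tracking the dependence on $s$ and $q$ so that the final constant is exactly the sieve loss $\frac{s}{\varphi(s)}\frac{\log\log H}{\log H}$ and not something larger.
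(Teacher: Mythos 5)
Your setup is sound through the reduction to the case $G'=\mathbb{R}$, the discard of the exceptional $n\notin\mathcal{N}$ and bad $a$, and the sieve for $n+h$ lacking a prime in the designated range; that sieve term is indeed the dominant contribution of order $NHs\frac{s}{\varphi(s)}\frac{\log\log H}{\log H}$, which matches the paper (which uses its own set $\widetilde{\mathcal{S}}$ built from primes in $(P,Q]=(2^{r_-},2^{r_+}]$ rather than the major-arc $\mathcal{S}$, but this is a parameter choice of the same strength). The Fourier reduction to phases $e(kf'(\cdot))$, $|k|\ll W^{O(B)}$, is also fine.

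The gap is in the last step. After the Ramar\'e-type identity, the inner sum over the complementary variable (your $m$, the paper's $l$) is \emph{not} a free, phase-only sum: it is $\sum_{m}\beta(m)\,e(kf'(pm))$ (with the divisor-counting weight $\frac{1}{1+|\{q\in\mathcal{P}:q\mid m\}|}$ as well), and the multiplicative weight $\beta(m)$ kills any direct Weyl or van der Corput bound. There is no pointwise-in-$(p,n)$ saving to be had from the phase alone; calling this a ``Type~I'' sum and estimating the inner sum by Weyl's inequality does not work, and this is precisely the obstruction that forces the bilinear machinery. The paper's route is therefore different at exactly this point: it dyadically localizes $p\in(2^{r-1},2^r]$, applies Cauchy--Schwarz \emph{in $l$} (formula (3.26)) to remove $\beta(l)$ and the divisor weight, and after opening the square obtains a genuine Type~II expression over \emph{pairs} of primes $(p_1,p_2)$; cancellation then comes from Proposition~\ref{small portion with not equidistributed}, which shows that for all but a $W^{-B_2-2}$-proportion of triples $(a,n,p)$ the phase difference $\widetilde{F}_{p_1a,n_1,\mathbf{j}_1,t}(g'(n_1,p_1l-n_1)\Gamma')\overline{\widetilde{F}_{pa,n,\mathbf{j},t}}(g'(n,pl-n)\Gamma')$ equidistributes along the progression in $l$, with the bad set controlled by contradiction against the total $W^{-B}$-equidistribution property (ii) of Theorem~\ref{decomposition theorem}. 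Your outline lacks both the Cauchy--Schwarz step and the resulting two-prime equidistribution estimate, and without them the argument cannot close.

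A secondary point: your claim that the contrapositive of Lemma~\ref{Weyl rule} makes $e(kf'(\cdot))$ a ``genuine minor arc phase'' is correct as a statement about $f'$, but this information is useless for the inner $m$-sum in your Type~I arrangement because of the $\beta(m)$ weight; it only becomes usable after Cauchy--Schwarz, when one needs equidistribution of the \emph{difference} of phases coming from two primes $p_1,p_2$, and it is this bilinear equidistribution (not the linear one) that the paper's Proposition~\ref{small portion with not equidistributed} delivers, again via the property (ii) of the factorization theorem.
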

Let $P,Q$ be parameters to be determined later with $W<P<Q<H$. Given an integer $s\geq 1$, define $\mathcal{P}$ as the set of primes $p$ in $(P,Q]$ which are coprime to $s$. Note that here in our setting $(p,s)=1$ in $\mathcal{P}$, which is a little different from that of \cite[Section 6]{HW}. Let  $\widetilde{\mathcal{S}}=\{n: 1\leq n\leq N, \exists p\in \mathcal{P}, p\mid n\}$. Similar to the proof of \cite[Lemma 6.2]{HW}, we have
\begin{equation}\label{afternoon4}
\sum_{\substack{ h\leq Hs \\ n+h\in \widetilde{\mathcal{S}}  \\ n+h \equiv a(s)}} \Bigg| \beta(n+h) - \sum_{p\in \mathcal{P}}\sum_{l\in \mathbb{N}}\frac{1_{pl=n+h} \beta(p) \beta(l)}{1+|\{q\in\mathcal{P}: q\mid l\}|}\Bigg|  \ll \frac{H}{P}.
\end{equation}
By the above inequality, we conclude the following result.
\begin{lemma}\label{evening formula1anfafternon5}
The expression
\begin{equation}\label{10/28/evening formula1}
\sum_{a=1}^s \sum_{n=1}^{N} \sum_{t=1}^{W^2} \sum_{j=0}^{q-1}\sum_{\substack{h\in I_t \\ n+h\equiv js+a (qs)}}1_{\widetilde{\mathcal{S}}}(n+h)
\beta(n+h)\widetilde{F}_{a,n,j,t}(g'(n,h)\Gamma')
\end{equation}
is approximated by
\begin{equation}\label{afternoon5}
\sum_{a=1}^s \sum_{n\in \mathcal{N}}\sum_{t=1}^{W^2} \sum_{j=0}^{q-1} \sum_{p\in \mathcal{P}} \sum_{\substack{l\in \mathbb{N} \\ pl\equiv js+a (qs)}}
\frac{1_{pl\in n+I_t} \beta(p) \beta(l)}{1+|\{q\in\mathcal{P}: q\mid l\}|} \widetilde{F}_{a,n,j,t}(g'(n,pl-n)\Gamma')
\end{equation}
up to an error $O(P^{-1}HNs+W^{-B/2}HNs)$, where $\mathcal{N}$ is the set defined in equation (\ref{defofdensesetn}).
\end{lemma}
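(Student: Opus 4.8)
The plan is to transform (\ref{10/28/evening formula1}) into (\ref{afternoon5}) by three successive and essentially mechanical steps, the only genuinely arithmetic ingredient being the already-established inequality (\ref{afternoon4}). Fix $n$ and, for each $h$, let $j(h)$ and $t(h)$ be the unique indices with $n+h\equiv j(h)s+a\,(qs)$ and $h\in I_{t(h)}$; since the $I_t$ partition $[1,Hs]$ and, for each $a$, the classes $js+a\,(qs)$, $j=0,\dots,q-1$, partition the class $a\,(s)$, the triple sum over $t$, $j$ and $h\in I_t$ with $n+h\equiv js+a\,(qs)$ is simply the sum over all $h\le Hs$ with $n+h\equiv a\,(s)$. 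As $\|F\|_{Lip}\le 1$ gives $|F_{a,n,j,t}|\le 1$ and hence $|\widetilde F_{a,n,j,t}|\le 2$, the portion of (\ref{10/28/evening formula1}) with $n\notin\mathcal{N}$ is bounded in absolute value by $\sum_{n\notin\mathcal{N}}\sum_{a=1}^{s}\sum_{h\le Hs,\ n+h\equiv a(s)}2\le 2(N-|\mathcal{N}|)Hs=O(W^{-B/2}HNs)$ by (\ref{defofdensesetn}), so I may replace $\sum_{n=1}^{N}$ by $\sum_{n\in\mathcal{N}}$ at this cost.

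Next I would insert the convolution kernel $K(m):=\sum_{p\in\mathcal{P}}\sum_{l\in\mathbb{N}}\frac{1_{pl=m}\beta(p)\beta(l)}{1+|\{q\in\mathcal{P}:q\mid l\}|}$ in place of the weight $1_{\widetilde{\mathcal{S}}}(n+h)\beta(n+h)$. The key point is that $K(m)\ne 0$ forces some $p\in\mathcal{P}$ to divide $m$, so $m\in\widetilde{\mathcal{S}}$ for all $m$ in the relevant range; hence $1_{\widetilde{\mathcal{S}}}(m)\beta(m)-K(m)$ is supported on $\widetilde{\mathcal{S}}$ and equals $\beta(m)-K(m)$ there. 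For fixed $n\in\mathcal{N}$ and $a\in[s]$, multiplying this difference by the bounded weight $h\mapsto\widetilde F_{a,n,j(h),t(h)}(g'(n,h)\Gamma')$, summing over $h\le Hs$ with $n+h\equiv a\,(s)$, and applying the triangle inequality together with (\ref{afternoon4}), yields an error $O(H/P)$ for this $(n,a)$; summing over $a\in[s]$ and $n\in\mathcal{N}$ gives a total error $O(P^{-1}HNs)$. After this substitution, (\ref{10/28/evening formula1}) has become, up to the errors collected so far, $\sum_{a=1}^{s}\sum_{n\in\mathcal{N}}\sum_{t}\sum_{j}\sum_{h\in I_t,\ n+h\equiv js+a(qs)}K(n+h)\,\widetilde F_{a,n,j,t}(g'(n,h)\Gamma')$.

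Finally I would expand $K(n+h)$ and change variables $h=pl-n$: the constraint $h\in I_t$ becomes $pl\in n+I_t$, the congruence $n+h\equiv js+a\,(qs)$ becomes $pl\equiv js+a\,(qs)$, and $g'(n,h)=g'(n,pl-n)$, while $h\ge 1$ is automatic so $l$ runs over $\mathbb{N}$. Interchanging the finite sums and relabelling reproduces (\ref{afternoon5}) verbatim. Combining the two error terms yields the claimed bound $O(P^{-1}HNs+W^{-B/2}HNs)$.

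I expect the only delicate point to be the support-matching in the second step: one should check that $\sum_{p\in\mathcal{P},\,p\mid m}\frac{1}{1+|\{q\in\mathcal{P}:q\mid m/p\}|}\le 1$, so that $|K(m)|\le 1$, and that the range in the definition of $\widetilde{\mathcal{S}}$ is taken large enough to contain every $m=n+h\le N+Hs$ occurring here; any $m>N$ that slips through contributes only $O((Hs)^2)$, which is harmless under the standing hypothesis $s\log H\le\frac{1}{2}(\log N)^{1/32}$ since then $Hs=N^{o(1)}$. With these checks in place the argument is routine, as (\ref{afternoon4}) carries all the cancellation.
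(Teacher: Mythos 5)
Your proposal is correct and follows exactly the route the paper intends: the paper's own proof consists of a one-line appeal to (\ref{afternoon4}), and you supply the three standard steps (discarding $n\notin\mathcal{N}$ via $|\mathcal{N}|\geq(1-W^{-B/2})N$, swapping $1_{\widetilde{\mathcal{S}}}(n+h)\beta(n+h)$ for the Ramar\'e-type kernel via (\ref{afternoon4}), and the change of variables $h=pl-n$) with the right error accounting. Your side remarks — that $|K(m)|\le 1$ and that the $m>N$ boundary terms contribute only $O((Hs)^2)$, which is negligible since $Hs=N^{o(1)}$ — correctly patch the only loose ends the paper glosses over.
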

In the following, we show that the estimate of (\ref{afternoon5}) is reduced to proving (\ref{10/28formula4}).
Let
\begin{equation}\label{definition of P and Q}
P=2^{r_{-}},~~Q=2^{r_{+}},~~r_{-}=\lfloor 2\log\log H\rfloor+1,~~r_{+}=\lfloor 1/10 \log H\rfloor+1.
\end{equation}
Then (\ref{afternoon5}) becomes
\begin{equation}\label{afternoon6}
\sum_{ r\in(r_{-} , r_{+}]}
\sum_{a=1}^s \sum_{n\in \mathcal{N}} \sum_{t=1}^{W^2} \sum_{j=0}^{q-1}\sum_{\substack{p\in (2^{r-1},2^{r}]\\(p,s)=1}}   \sum_{\substack{l\in \mathbb{N} \\ pl\equiv js+a (qs)}}
\frac{1_{pl\in n+I_t} \beta(p) \beta(l)}{1+|\{q\in\mathcal{P}: q\mid l\}|} \widetilde{F}_{a,n,j,t}(g'(n,pl-n)\Gamma').
\end{equation}
Given an integer $r\in (r_{-},r_{+}]$ and a prime $p\in (2^{r-1},2^{r}]$ with $(p,s)=1$, by the choice of $P$ and $W$, we see that $(p,q)=1$. Fix an integer $\overline{p}$ with $\overline{p}p\equiv 1(qs)$. Given $a=1,\ldots,s$ and $j=0,\ldots,q-1$, let $b_{a}\in [1,s]$ with $\overline{p}b_{a}\equiv a(s)$. Write $\overline{p}b_{a}=l_{a}s+a$ for some integer $l_{a}$. Define $J_{p,a,j}$ as the integer in $[0,q-1]$ with
\begin{equation}\label{definition of capitalj}
J_{p,a,j}\equiv p(j-l_{a})(mod~q).
\end{equation}
In the following, for simplicity, we use $\textbf{j}$ to denote $J_{p,a,j}$. Now the expression inside the summation over $r$ in formula (\ref{afternoon6}) becomes
\begin{align}\label{10/28formula1}
\begin{aligned}
&\sum_{\substack{p\in (2^{r-1},2^{r}]\\(p,s)=1}}\sum_{n\in \mathcal{N}} \sum_{t=1}^{W^2}\sum_{a=1}^s \sum_{j=0}^{q-1}\sum_{\substack{l\in \mathbb{N} \\ l\equiv \overline{p}(js+a) (qs)}}
\frac{1_{pl\in n+I_t} \beta(p) \beta(l)}{1+|\{q\in\mathcal{P}: q\mid l\}|} \widetilde{F}_{a,n,j,t}(g'(n,pl-n)\Gamma')\\
=&\sum_{\substack{p\in (2^{r-1},2^{r}]\\(p,s)=1}}\sum_{n\in \mathcal{N}} \sum_{t=1}^{W^2} \sum_{a=1}^s \sum_{j=0}^{q-1}\sum_{\substack{l\in \mathbb{N} \\ l\equiv js+a (qs)}}
\frac{1_{pl\in n+I_t} \beta(p) \beta(l)}{1+|\{q\in\mathcal{P}: q\mid l\}|} \widetilde{F}_{pa,n,\textbf{j},t}(g'(n,pl-n)\Gamma').
\end{aligned}
\end{align}
Here we view the index $pa$ appearing in $\widetilde{F}$ as $pa(mod~s)$.
By Cauchy-Schwarz inequality, we can bound equation (\ref{10/28formula1}) by
\begin{align}
&\leq \sum_{a=1}^s \sum_{j=0}^{q-1}\sum_{t=1}^{W^2} \sum_{\substack{l\leq \frac{N+Hs}{2^{r-1}} \\ l\equiv js+a (qs)}}   \frac{ |\beta(l)|}
{1+|\{q\in\mathcal{P}: q\mid l\}|}\Bigg|\sum_{n\in \mathcal{N}} \sum_{\substack{p\in (2^{r-1},2^{r}]\\pl\in n+I_{t}\\(p,s)=1}}
\beta(p)\widetilde{F}_{pa,n,\textbf{j},t}(g'(n,pl-n)\Gamma')\Bigg| \nonumber\\
&\leq \left(\frac{N}{2^{r-1}}W^2\right)^{1/2}
\Bigg(\sum_{a=1}^s \sum_{j=0}^{q-1}\sum_{t=1}^{W^2} \sum_{\substack{ l\leq \frac{N+Hs}{2^{r-1}} \\ l\equiv js+a(qs)}} \Bigg|
\sum_{n\in \mathcal{N}}\sum_{\substack{ p\in(2^{r-1},2^r] \\ pl \in n + I_t \\ (p,s)= 1}}
\beta(p)\widetilde{F}_{pa,n,\textbf{j},t}(g'(n,pl-n)\Gamma')\Bigg|^2 \Bigg)^{1/2} \nonumber \\
&=\left(\frac{N}{2^{r-1}}W^2\right)^{1/2}\cdot S^{1/2},\label{10/28formula3}
\end{align}
where \[S:=\sum_{a=1}^s\sum_{j=0}^{q-1}\sum_{t=1}^{W^2} \sum_{\substack{ l\leq \frac{N+Hs}{2^{r-1}} \\ l\equiv js+a(qs)}} \Bigg|
\sum_{n\in \mathcal{N}} \sum_{\substack{ p\in(2^{r-1},2^r] \\ pl \in n + I_t \\ (p,s)= 1}}
\beta(p)\widetilde{F}_{pa,n,\textbf{j},t}(g'(n,pl-n)\Gamma')\Bigg|^2.\]
Now the proof of Theorem \ref{minor arc estimate} is reduced to proving
\begin{equation}\label{10/28formula4}
S\ll2^r W^{-(B_2+2)} H^2 N s^2,
\end{equation}
where the parameter $B_{2}$ is given in formula (\ref{parameter b}).
\begin{proof}[Proof of Theorem \ref{minor arc estimate}](Assume formula (\ref{10/28formula4}))
By formulas (\ref{afternoon6}), (\ref{10/28formula1}), (\ref{10/28formula3}) and (\ref{10/28formula4}), we have
\[\sum_{a=1}^s \sum_{n\in \mathcal{N}}\sum_{t=1}^{W^2} \sum_{j=0}^{q-1} \sum_{p\in \mathcal{P}} \sum_{\substack{l\in \mathbb{N} \\ pl\equiv js+a (qs)}}
\frac{1_{pl\in n+I_t} \beta(p) \beta(l)}{1+|\{q\in\mathcal{P}: q\mid l\}|} \widetilde{F}_{a,n,j,t}(g'(n,pl-n)\Gamma')\ll r_{+}W^{-\frac{B_{2}}{2}}HNs.\]
Then by Lemma \ref{evening formula1anfafternon5},
\[\sum_{a=1}^s \sum_{n=1}^{N} \sum_{t=1}^{W^2} \sum_{j=0}^{q-1}\sum_{\substack{h\in I_t \\ n+h\equiv js+a (qs)}}1_{\widetilde{\mathcal{S}}}(n+h)
\beta(n+h)\widetilde{F}_{a,n,j,t}(g'(n,h)\Gamma')\ll (r_{+}W^{-\frac{B_{2}}{2}}+P^{-1}+W^{-\frac{B}{2}})HNs.\]
We know that $B=100C_{0}^2$, $B_{2}=\frac{1}{10}C_{0}^{-2}B$ by formula (\ref{parameter b}), $P=2^{\lfloor 2\log\log H\rfloor+1}$, $Q=2^{\lfloor 1/10 \log H\rfloor+1}$ by (\ref{definition of P and Q}), and $W=\lfloor \log H \rfloor$ by (\ref{parameter choice}).
Note that $B\geq 10$. Then
\begin{equation}\label{evening3}
\sum_{a=1}^s \sum_{n=1}^{N} \sum_{t=1}^{W^2} \sum_{j=0}^{q-1}\sum_{\substack{h\in I_t \\ n+h\equiv js+a (qs)}}1_{\widetilde{\mathcal{S}}}(n+h)
\beta(n+h)\widetilde{F}_{a,n,j,t}(g'(n,h)\Gamma')\ll (\log H)^{-1}NHs.
\end{equation}
We are left with estimating the contribution from
\begin{equation}\label{evening4}
\sum_{a=1}^s \sum_{n=1}^{N} \sum_{t=1}^{W^2} \sum_{j=0}^{q-1}\sum_{\substack{h\in I_t \\ n+h\equiv js+a (qs)\\n+h\notin \widetilde{\mathcal{S}}}}
\beta(n+h)\widetilde{F}_{a,n,j,t}(g'(n,h)\Gamma').
\end{equation}
By the fundamental sieve (see formula (\ref{fundamental sieve})), we have
\[(\ref{evening4})\leq \sum_{n=1}^{N}\sum_{\substack{m=n\\m\notin \widetilde{\mathcal{S}}}}^{n+Hs}1=\sum_{\substack{m=1\\m\notin \widetilde{\mathcal{S}}}}^{N+Hs}\sum_{n=m-Hs}^{m}1\ll NHs\frac{\log P}{\log Q}\frac{s}{\varphi(s)}\ll NHs\frac{s}{\varphi(s)}\frac{\log\log H}{\log H}.\]
\end{proof}
In the end, we focus on showing formula (\ref{10/28formula4}).  Denote by $\mathcal{A}_{n,p,t} =\{l\in \mathbb{N}: pl-n\in I_t\}$ and $\mathcal{A}_{n_{1},n_{2},p_{1},p_{2},t}=\mathcal{A}_{n_{1},p_{1},t}\cap \mathcal{A}_{n_{2},p_{2},t}$. It is not hard to check that $|\mathcal{A}_{n_{1},n_{2},p_{1},p_{2},t}|\ll 2^{-r}W^{-2}Hs$. For given $t\in[W^2]$, $n_{1}\in \mathcal{N},p_{1},p_{2}\in (2^{r-1},2^{r}]$,
\begin{equation}\label{11/7morning}
|\{n_{2}:n_{2}\in \mathcal{N}:\mathcal{A}_{n_{1},n_{2},p_{1},p_{2},t}\neq \emptyset\}|\leq W^{-2}Hs.
\end{equation}
Expanding the square term in $S$, we have that $S$ is bounded by
\begin{align}\label{10/28formula5}
\begin{aligned}
&\sum_{a=1}^s\sum_{n_{1}, n_{2}\in \mathcal{N}} \sum_{ \substack{ p_1,p_2 \in(2^{r-1}, 2^r] \\ (p_1p_2, s) =1}}
 \sum_{j=0}^{q-1}\sum_{t=1}^{W^2}\\
&\Bigg|\sum_{\substack{l\in \mathcal{A}_{n_{1},n_{2},p_{1},p_2,t} \\ l\equiv js+a(qs)}}\widetilde{F}_{p_{1}a,n_{1},\textbf{j}_{1},t}(g'(n_{1},p_{1}l-n_1)\Gamma')
\overline{\widetilde{F}_{p_{2}a,n_{2},\textbf{j}_{2},t}}(g'(n_{2},p_{2}l-n_2)\Gamma') \Bigg|,
\end{aligned}
\end{align}
where $\textbf{j}_{1}=J_{p_{1},a,j}$ and $\textbf{j}_{2}=J_{p_{2},a,j}$ defined as equation $(\ref{definition of capitalj})$.
To prove formula (\ref{10/28formula4}), it is enough to prove
\begin{equation}\label{11/4formula1}
(\ref{10/28formula5})\ll2^r W^{-(B_2+2)} H^2 N s^2.
\end{equation}
From Appendix \ref{aproofofaproposition}, the proof of formula (\ref{11/4formula1}) can be reduced to proving the following proposition, which is a variant version of \cite[Proposition 6.9]{HW} in arithmetic progressions that is adapted to our situation.
\begin{proposition}\label{small portion with not equidistributed}
Let $C_{0}$ be a large enough constant only depending on $d$, and let $B_{2}\geq 10$, $B\geq 10C_{0}^2B_{2}$, $H\geq \max(W^{B},2^{10r_{+}})$. Given $s\geq 1$, $j=0,\ldots,q-1$ and $t=1,\ldots,W^2$. Let $r_{-}$, $r_{+}$ be as in formula (\ref{definition of P and Q}). For $r\in (r_{-},r_{+}]$, $n_{1}\in \mathcal{N}$ and a prime number $p_{1}\in (2^{r-1},2^{r}]$ with $(p_{1},s)=1$,
the set $\Omega_{r,n_1,p_1,B_2}$ is defined to be the set of all triples:
\[
(a,n,p) \in [1,s] \times \mathcal{N}\times \{p:p\in (2^{r-1},2^{r}],(p,s)=1\}.
\]
such that

(\romannumeral1) $p$ is prime;

(\romannumeral2) $|\mathcal{A}_{n_1,n,p_1,p,t} |\geq 2^{-r} W^{-(B_2+2)} Hs $;

(\romannumeral3)
\[\Bigg|\sum_{\substack{l\in \mathcal{A}_{n_{1},n,p_{1},p,t} \\ l\equiv js+a(qs)}}\widetilde{F}_{p_{1}a,n_{1},\textbf{j}_{1},t}(g'(n_{1},p_{1}l-n_1)\Gamma')
\overline{\widetilde{F}_{pa,n,\textbf{j},t}}(g'(n,pl-n)\Gamma') \Bigg|\geq \frac{W^{-B_2}|\mathcal{A}_{n_{1},n,p_{1},p,t}|}{qs}.\]
Then
\begin{equation}\label{thecardinalityofdisequidistributed}
|\Omega_{r,n_1,p_1,B_2}| < 2^r W^{-B_2-2} Hs^2.
\end{equation}
\end{proposition}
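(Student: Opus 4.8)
The plan is to transplant He--Wang's treatment of the analogous estimate \cite[Proposition 6.9]{HW} to the present arithmetic--progression setting, the new feature being that the variable $l$ runs over a residue class modulo $qs$ whereas the equidistribution produced by the factorization (Theorem \ref{decomposition theorem}(\romannumeral2)) lives on progressions of common difference $s$. First I would dispose of the trivial subgroup: if $G'=\{0\}$ then each $\widetilde{F}_{\ast}$ vanishes identically, so (\romannumeral3) fails and $\Omega_{r,n_{1},p_{1},B_{2}}=\emptyset$; hence assume $G'=\mathbb{R}$, $\Gamma'=\mathbb{Z}$, and, by Remark \ref{form of g'}, $g'(n,h)=f'(n+h)$ for a scalar polynomial $f'(x)=\alpha'_{0}+\sum_{i=1}^{d}\alpha'_{i}x^{i}$. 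The key simplification is that then $g'(n_{1},p_{1}l-n_{1})=f'(p_{1}l)$ and $g'(n,pl-n)=f'(pl)$, so the only dependence of the summand on $(n_{1},n)$ is through the indices of $\widetilde{F}$, i.e.\ a unimodular phase and a shift $\mathcal{E}_{\ast}$ that varies by $O(W^{-1})$ as $n$ runs over the relevant window. Moreover condition (\romannumeral2) is already restrictive: for fixed $t,n_{1}$ the sets $\mathcal{A}_{n_{1},p_{1},t}$ and $\mathcal{A}_{n,p,t}$ are intervals of length $\ll 2^{-r}W^{-2}Hs$ in $l$, so if $|\mathcal{A}_{n_{1},n,p_{1},p,t}|\geq 2^{-r}W^{-(B_{2}+2)}Hs$ then $n$ lies in an interval of length $\ll W^{-2}Hs$ depending only on $(t,n_{1},p_{1},p)$; with $p$ ranging over $\ll 2^{r}$ primes and $a$ over $[s]$, the triples meeting (\romannumeral1),(\romannumeral2) number $\ll 2^{r}W^{-2}Hs^{2}$, so it suffices to show that (\romannumeral3) cuts this down by a factor $\gg W^{B_{2}}$.

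For the core step I would expand the zero-mean Lipschitz functions $\widetilde{F}_{p_{1}a,n_{1},\textbf{j}_{1},t}$ and $\overline{\widetilde{F}_{pa,n,\textbf{j},t}}$ in Fourier series on $\mathbb{R}/\mathbb{Z}$; since $\|F\|_{Lip}\leq 1$ the coefficients are $\ll 1/|k|$, so truncating at $|k|,|k'|\leq W^{C B_{2}}$ changes the correlation in (\romannumeral3) by $\ll W^{-100B_{2}}|\mathcal{A}_{n_{1},n,p_{1},p,t}|/(qs)$. Thus a triple in $\Omega$ admits nonzero integers $k,k'$ with $|k|,|k'|\ll W^{O(B_{2})}$ for which
\[
\Bigl|\sum_{\substack{l\in\mathcal{A}_{n_{1},n,p_{1},p,t}\\ l\equiv js+a\,(qs)}}e\bigl(kf'(p_{1}l)+k'f'(pl)\bigr)\Bigr|\gg W^{-O(B_{2})}\,\frac{|\mathcal{A}_{n_{1},n,p_{1},p,t}|}{qs}.
\]
Substituting $l=js+a+qs\ell$ turns the phase into a polynomial in $\ell$ of degree $\leq d$ over $\geq 2^{-r}W^{-(B_{2}+3)}H$ consecutive integers, which is long enough — given $H\geq\max(W^{B},2^{10r_{+}})$ and $B\gg B_{2}$ — to apply Lemma \ref{Weyl rule} with $\widetilde{R}=W^{O(B_{2})}$. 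Via Lemma \ref{coefficients} this forces a nonzero integer $D$ with $|D|\ll W^{O(B_{2})}2^{O(r)}$ such that
\[
\bigl\|D\,(k p_{1}^{i}+k' p^{i})\,s^{i}\alpha'_{i}\bigr\|_{\mathbb{R}/\mathbb{Z}}\ll W^{O(B_{2})}2^{O(r)}H^{-i}\qquad(i=1,\dots,d);
\]
equivalently, the $\mathbb{R}^{2}$-valued polynomial $l\mapsto(f'(p_{1}l),f'(pl))$ fails to be totally $W^{-O(B_{2})}$-equidistributed along that progression.

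It remains to feed in the factorization's equidistribution. For $n_{1}\in\mathcal{N}$, Theorem \ref{decomposition theorem}(\romannumeral2) supplies $(1-W^{-B/2})s$ ``good'' residues $a_{0}\in[s]$ for which $\{f'(n_{1}+h)\}_{h\in\mathcal{A}_{a_{0}}}$ is totally $W^{-B}$-equidistributed, and by Lemmas \ref{CNnorm} and \ref{Weyl rule} this translates into strong quantitative irrationality of the coefficients $s^{i}\alpha'_{i}$. Combining this with the displayed Diophantine relations and a counting argument over the primes $p\in(2^{r-1},2^{r}]$ (for fixed $k,k'$ the relation confines $p^{d}$, hence $p$, to a union of short intervals, so a sieve bounds the number of ``resonant'' $p$), one obtains that apart from $\ll W^{-B_{2}}2^{r}$ exceptional $p$ — which we arrange to include $p=p_{1}$, whose joint orbit lies on a diagonal — the joint orbit $(f'(p_{1}l),f'(pl))$ is equidistributed in $\mathbb{R}^{2}/\mathbb{Z}^{2}$ for every admissible $(a,n)$ with $p_{1}a-n_{1}\bmod s$ a good residue; as $\widetilde{F}_{p_{1}a,n_{1},\textbf{j}_{1},t}$ and $\widetilde{F}_{pa,n,\textbf{j},t}$ have zero mean, the correlation in (\romannumeral3) is then $\ll W^{-B_{2}}|\mathcal{A}_{n_{1},n,p_{1},p,t}|/(qs)$, so such a triple is not in $\Omega$. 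Hence every triple in $\Omega$ has $a$ among the $\leq W^{-B/2}s$ bad residues or $p$ among the $\ll W^{-B_{2}}2^{r}$ resonant primes; since each choice still leaves $\ll W^{-2}Hs$ values of $n$ and $\ll s$ of the remaining parameter, and $B/2\geq B_{2}+2$, the total is $\ll (W^{-B/2}s\cdot 2^{r}+W^{-B_{2}}2^{r}\cdot s)\,W^{-2}Hs\ll 2^{r}W^{-B_{2}-2}Hs^{2}$, as claimed.

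The main obstacle is the bookkeeping in the core step: one must juggle the three moduli $s$, $qs$, $pqs$ simultaneously, keep all the interval lengths (of size $\approx 2^{-r}W^{-O(B_{2})}H$ after the reductions) above the thresholds demanded by Lemmas \ref{CNnorm} and \ref{Weyl rule}, and — most delicately — control the dilation losses $2^{O(r)}$ incurred in passing from $f'$ to its dilates $f'(p_{1}\cdot),f'(p\cdot)$, which can be as large as $H^{O(1)}$, against the $W^{-B}$-strength of the equidistribution coming from the factorization; this is exactly what pins down the hypotheses $H\geq\max(W^{B},2^{10r_{+}})$ and $B\gg C_{0}^{2}B_{2}$. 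A secondary difficulty is cleanly isolating the degenerate configurations — notably $p=p_{1}$, and more generally the vanishing of $kp_{1}^{i}+k'p^{i}$ for all $i$ — for which the correlation need not be small and which must be absorbed into the ``resonant prime'' count or handled via the Appendix reduction rather than by the equidistribution contradiction.
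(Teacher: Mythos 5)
Your overall strategy matches the paper's — run Lemma \ref{Weyl rule} (Weyl's criterion) on the correlation in (\romannumeral3), extract Diophantine information about the coefficients of $f'$, and then contradict the totally $W^{-B}$-equidistribution that defines $\mathcal{N}$ — but there is a concrete gap precisely at the point the paper flags as the new difficulty. After substituting $l=qs\ell+c_{0}+a$, the phase $kf'(p_{1}l)+k'f'(pl)$ is a polynomial in $\ell$ whose coefficient of $\ell^{m}$ is
\[
(qs)^{m}\sum_{i\geq m}\bigl(kp_{1}^{i}+k'p^{i}\bigr)\alpha'_{i}\binom{i}{m}(c_{0}+a)^{i-m},
\]
a \emph{sum} over $i\geq m$ that still carries an explicit polynomial dependence on $a$. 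Applying Lemma \ref{Weyl rule} and Lemma \ref{coefficients} in $\ell$ alone therefore does \emph{not} yield the decoupled relation you assert, namely $\|D(kp_{1}^{i}+k'p^{i})s^{i}\alpha'_{i}\|_{\mathbb{R}/\mathbb{Z}}\ll W^{O(B_{2})}2^{O(r)}H^{-i}$ for each $i$; it only controls these mixed sums, one per $m$. The paper's proof resolves this by first pigeonholing the large set $\Omega$ to a common character $\eta$ and a dense subset of residues $a\in[s]$, then viewing the resulting smoothness-norm bounds as polynomials \emph{in $a$} over $[s]$ and applying Lemmas \ref{CNnorm}, \ref{coefficients} a second time, and finally a third time viewing them as polynomials \emph{in $p$} over $(2^{r-1},2^{r}]$ (this triple iteration produces the quantities $\beta_{l_{1},l_{2},l_{1}'}$ in formula (\ref{11/2formula3})). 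Since $s$ can dwarf $H$, this promotion of $a$ to a polynomial variable is not a cosmetic rearrangement of He--Wang — it is exactly what makes the argument work in arithmetic progressions, and without it your Diophantine conclusion is unproved.

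A second, smaller issue is the endgame. You close by a ``resonant prime'' count that presupposes strong irrationality of the individual $s^{i}\alpha'_{i}$ as a consequence of the factorization's equidistribution; but the factorization (property (\romannumeral2) below (\ref{decomposition})) equidistributes the one-dimensional sequence $\{f'(n_{1}+h)\}_{h\in\mathcal{A}_{a_{0}}}$, and turning that into quantitative irrationality of separated coefficients again requires the same $a$-machinery. The paper instead goes the other direction: once the $\beta_{l_{1},l_{2},l_{1}'}$ are controlled as in (\ref{11/2formula3}), it picks a single pair $(n,p)$, builds the explicit Lipschitz function $F(x)=e(Z\eta_{2}(x))$, and shows via formula (\ref{10/31formula7}) that $\{g'(n,h)\}_{h\in\mathcal{B}_{a}}$ fails total $W^{-C_{0}^{-1}B}$-equidistribution for $\geq W^{-C_{0}B_{2}}s$ residues $a$; since $B\geq10C_{0}^{2}B_{2}$ this contradicts the definition of $\mathcal{N}$. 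Finally, a minor technical point: your Fourier truncation of the zero-mean Lipschitz factors uses an $\ell^{1}$ bound on Fourier coefficients which Lipschitz regularity does not supply; the paper instead passes to characters of $G'/\Gamma'\times G'/\Gamma'$ directly via Lemma \ref{Weyl rule} and handles the reduction from Lipschitz functions to pure phases by the standard argument of \cite[Proposition~3.1]{GT2}.
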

By the previous analysis and formulas (\ref{10/28formula4}) and (\ref{11/4formula1}), to prove Theorem \ref{minor arc estimate}, it remains to proving Proposition \ref{small portion with not equidistributed}. Next, we explain the main issue to prove Proposition \ref{small portion with not equidistributed}. In the proof,
we mainly apply the method of \cite[Section 7]{HW}, in which the authors deal with $g'(n,pl-n)$, a polynomial sequence of variables $n,p,l$ in general nilmanifolds. Specially $g'(n,pl-n)$ is a polynomial of variables $p,l$ in $\mathbb{R}/\mathbb{Z}$ if $g'(n,h)=f(n+h)$ for some polynomial $f:\mathbb{Z}\rightarrow \mathbb{R}$. While for our case, it is not enough if we just treat $g'(n,p(sl+a)-n)$ as a polynomial of variables $p,l$. Instead, we treat $g'(n,p(sl+a)-n)$ as a polynomial of variables $a,p,l$ because $a$ may vary in a wide range $[1,s]$ compared with $[1,H]$. Combining with this idea and the proof of \cite[Proposition 6.9]{HW}, under the assumption that (\ref{thecardinalityofdisequidistributed}) does not hold, we shall obtain a conclusion contradicting the property that the sequence $\{g'(n,h)\}_{h\in \mathcal{A}_{a}}$ is totally $W^{-B/2}$-equidistribution in $\mathbb{R}/\mathbb{Z}$ for almost all $a$ in $[1,s]$ (see property (ii) below formula (\ref{decomposition})). For self-containing, we give the detailed proof of Proposition \ref{small portion with not equidistributed} in Appendix \ref{aproofofaproposition}.

Finally, we are ready to prove Theorem \ref{the estimate of general multiplicative functions}.
\begin{proof}[Proof of Theorem \ref{the estimate of general multiplicative functions}]
By formulas (\ref{afternoon1}), (\ref{afternoon2}), (\ref{afternoon3}) and Theorems \ref{the estimate of major arc part}, \ref{minor arc estimate}, we obtain the statement in this theorem.
\end{proof}
\begin{remark}\label{another estimate about Theorem1}
{\rm In the estimate of the major arc case (see Subsection \ref{the estimate of major arc case}), if we do not split the summation over $h$ into $n+h\in \mathcal{S}$ and $n+h\notin\mathcal{S}$, we shall obtain the following estimate under the same assumptions as Theorem \ref{the estimate of general multiplicative functions},
\begin{align}\label{1/10formula1}
\begin{aligned}
&\sum_{a=1}^s \sum_{n=1}^{N}\left|\sum_{\substack{1\leq h\leq Hs\\n+h\equiv a(mod~s)}}\beta(n+h)F\big(P(n+h)(mod~\mathbb{Z})\big)\right|\ll NHs\Big(\frac{s}{\varphi(s)}\frac{\log\log H}{\log H}\Big)^{1/2}\\
&+ NHs\Big(\frac{\log\log H}{(\log H)^{1/2}}+\frac{1}{(\log N)^{1/600}}\Big)+NHs\frac{M\big(\beta;N/\log N,2N,(\log N)^{1/32}\big)+1}{\exp\Big(\frac{1}{2}M\big(\beta;N/\log N,2N,(\log N)^{1/32}\big)\Big)}.
\end{aligned}
\end{align}
Compared with (\ref{1/18}), it removes $(\log N)^{o(1)}$ before $M(\beta;\cdot,\cdot,\cdot)\exp(-\frac{1}{2}M(\beta;\cdot,\cdot,\cdot))$.}
\end{remark}
\section{Proof of Theorem \ref{disjointfromasymptoticallyperiodicfunctionsb}}\label{4}
To prove Theorem \ref{disjointfromasymptoticallyperiodicfunctionsb}, we need some preparation. The following result is on the self-correlation of $\mu(n)e(P(n))$ in short arithmetic progressions on average.
\begin{lemma}\label{the distribution of mobius in short arithmetic interval}
Let $s\geq 1$ and $h\geq 3$ be integers. Suppose that $P(x)\in \mathbb{R}[x]$ is of degree $d\geq 0$. Then
\begin{equation}\label{has relation with k}
\limsup_{N\rightarrow \infty}\frac{1}{N}\sum_{n=1}^{N}\Bigg|\frac{1}{h}\sum_{l=1}^{h}\mu(n+ls)e\big(P(n+ls)\big)\Bigg|^2\ll \frac{s}{\varphi(s)}\frac{\log\log h}{\log h},
\end{equation}
where the implied constant depends on $d$ at most.
\end{lemma}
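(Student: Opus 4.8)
The plan is to deduce Lemma~\ref{the distribution of mobius in short arithmetic interval} directly from Theorem~\ref{the theorem we should prove}. For $n\geq 1$ write
\[
z_n:=\frac{1}{h}\sum_{l=1}^{h}\mu(n+ls)\,e\big(P(n+ls)\big),
\]
so that $|z_n|\leq 1$, since each summand has modulus at most one; hence $|z_n|^2\leq|z_n|$. Therefore it is enough to prove that $\limsup_{N\to\infty}\frac1N\sum_{n=1}^{N}|z_n|\ll \frac{s}{\varphi(s)}\frac{\log\log h}{\log h}$ with implied constant depending on $d$ at most. Replacing the square by a first power puts us in exactly the shape of the left-hand side of (\ref{an explict estimate of s and h}), so the remaining task is to compare the average of $|z_n|$ over $n\in[N]$ with that double average.

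The one point that needs care is that summing over \emph{all} residues $a\in[s]$ in (\ref{an explict estimate of s and h}) recovers the honest average over $n$ with no loss of a factor $s$. Fix $n\in[N]$ with $n\geq s$ and $a\in[s]$, and let $r\in\{1,\dots,s\}$ be congruent to $a-n$ modulo $s$; then the $h$ integers $m\in(n,n+hs]$ with $m\equiv a\pmod s$ are precisely $n'+s,n'+2s,\dots,n'+hs$, where $n':=n+r-s\in\{n-s+1,\dots,n\}\subseteq[1,N]$ and $n'\equiv a\pmod s$. Hence the inner term of (\ref{an explict estimate of s and h}) attached to the pair $(n,a)$ equals $|z_{n'}|$. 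As $a$ runs over $[s]$ with $n$ fixed, the map $a\mapsto n'(n,a)$ is a bijection onto the $s$ consecutive integers $\{n-s+1,\dots,n\}$, because these have pairwise distinct residues modulo $s$. Summing first over $a$ and then over $n$, each $z_k$ with $1\leq k\leq N$ is counted with multiplicity exactly $s$ for $s\leq k\leq N-s+1$ and with multiplicity between $0$ and $s$ otherwise, while the $O(s)$ pairs with $n<s$ and the boundary values of $k$ contribute $O(s^{2})$ in total; therefore
\[
\sum_{a=1}^{s}\sum_{n=1}^{N}\left|\frac{1}{h}\sum_{\substack{m=n+1\\ m\equiv a\,(s)}}^{n+hs}\mu(m)e(P(m))\right|\;=\;s\sum_{k=1}^{N}|z_k|+O(s^{2}),
\]
so that $\frac1N\sum_{k=1}^{N}|z_k|$ is at most the left-hand side of (\ref{an explict estimate of s and h}) before taking $\limsup$, plus an error of size $O(s/N)$.

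To conclude, let $N\to\infty$ with $s$ and $h$ fixed, so that $O(s/N)\to0$; Theorem~\ref{the theorem we should prove} then bounds the resulting $\limsup$ by $\ll\frac{s}{\varphi(s)}\frac{\log\log h}{\log h}$, whence $\limsup_{N}\frac1N\sum_{k=1}^{N}|z_k|\ll\frac{s}{\varphi(s)}\frac{\log\log h}{\log h}$, and combining with $|z_n|^{2}\leq|z_n|$ yields (\ref{has relation with k}). The main, and essentially the only, obstacle is the bookkeeping in the middle paragraph: a crude argument that retains only the diagonal residue $a\equiv n\pmod s$ loses a spurious factor of $s$ and gives the weaker bound $\frac{s^{2}}{\varphi(s)}\frac{\log\log h}{\log h}$, so one genuinely has to exploit that the starting points of the length-$h$ progressions of common difference $s$ are \emph{partitioned}, not merely covered, by the residue classes modulo $s$.
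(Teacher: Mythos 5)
Your proof is correct and takes essentially the same route as the paper's: bound $|z_n|^2\leq|z_n|$ (the paper writes the equivalent $|\cdot|^2\leq h|\cdot|$ without the $1/h$ normalization), then establish that $\frac1N\sum_{n\leq N}|z_n|$ coincides, up to an error that vanishes as $N\to\infty$, with the double average $\frac{1}{Ns}\sum_{a}\sum_{n}(\cdots)$ from Theorem~\ref{the theorem we should prove}, and apply that theorem. The only difference is in the bookkeeping for the reindexing: you do an explicit bijective multiplicity count (each $z_k$ hit exactly $s$ times away from boundaries), whereas the paper writes $n=ms+a$, reindexes $x=(m+1)s$, and then replaces the sum over multiples of $s$ by $\frac1s\sum_{x\leq N}$ using implicitly that the inner sum varies by $O(1)$ over any block of $s$ consecutive $x$; both arguments establish the identity~(\ref{an identity}), and your error term $O(s^{2})$ in the unnormalized identity is in fact a little sharper than the paper's $O(N)$, though both are amply sufficient.
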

\begin{proof}
Given $s\geq 1$ and $h\geq 3$, for $N$ large enough,
\begin{align*}
  &\sum_{n=1}^{N}\Bigg|\sum_{l=1}^{h}\mu(n+ls)e\big(P(n+ls)\big)\Bigg|=\sum_{a=1}^{s}\sum_{\substack{n=1\\n\equiv a(mod~s)}}^{N}\Bigg|\sum_{l=1}^{h}\mu(n+ls)e\big(P(n+ls)\big)\Bigg|\\
  &=\sum_{a=1}^{s}
  \sum_{m=1}^{N/s}\Bigg|\sum_{l=1}^{h}\mu(ms+ls+a)e\big(P(ms+ls+a)\big)\Bigg|+O(hs)\\
 &=\sum_{a=1}^{s}
  \sum_{m=1}^{N/s}\Bigg|\sum_{\substack{n=(m+1)s\\n\equiv a(mod~s)}}^{(m+h+1)s}\mu(n)e\big(P(n)\big)\Bigg|+O(hs)\\
&=\sum_{a=1}^{s}
  \sum_{\substack{x=1\\s|x}}^{N}\Bigg|\sum_{\substack{n=x\\n\equiv a(mod~s)}}^{x+hs}\mu(n)e\big(P(n)\big)\Bigg|+O(hs),
\end{align*}
then
\begin{equation}\label{an identity}
\sum_{n=1}^{N}\Bigg|\sum_{l=1}^{h}\mu(n+ls)e\big(P(n+ls)\big)\Bigg|=\frac{1}{s}\sum_{a=1}^{s}
  \sum_{x=1}^{N}\Bigg|\sum_{\substack{n=x\\n\equiv a(mod~s)}}^{x+hs}\mu(n)e\big(P(n)\big)\Bigg|+O(N).
\end{equation}
By the above and Theorem \ref{the theorem we should prove},
\begin{align*}
\limsup_{N\rightarrow \infty}\frac{1}{N}\sum_{n=1}^{N}\Bigg|\sum_{l=1}^{h}\mu(n+ls)e\big(P(n+ls)\big)\Bigg|^2&\leq h\limsup_{N\rightarrow \infty}\frac{1}{N}\sum_{n=1}^{N}\Bigg|\sum_{l=1}^{h}\mu(n+ls)e\big(P(n+ls)\big)\Bigg|\\
&\ll h^2\frac{s}{\varphi(s)}\frac{\log\log h}{\log h}
\end{align*}
as claimed.
\end{proof}
In the following, we prove a more general result than Theorem \ref{disjointfromasymptoticallyperiodicfunctionsb}.
\begin{lemma}\label{disjointfromasymptoticallyperiodicfunctions1b}
Let $(X,T)$ be a topological dynamical system. Suppose $x_{0}\in X$ satisfies the following condition: for any $\nu\in \mathcal{M}(x_{0};X,T)$, there is a dense set $\mathcal{F}\subseteq C(X)$ such that for each $g(x)\in\mathcal{F}$ we can find sequences $\{h_{j}\}_{j=1}^{\infty}$ and $\{s_{j}\}_{j=1}^{\infty}$ (may depend on $\nu$, $g$) of positive integers with
\[
\lim_{j\rightarrow \infty}\frac{\log\log h_{j}}{\log h_{j}}\frac{s_{j}}{\varphi(s_{j})}=0
\]
and
\[
\lim_{j\rightarrow \infty}\frac{1}{h_{j}}\sum_{l=1}^{h_{j}}\|g\circ T^{ls_{j}}-g\|_{L^2(\nu)}^2=0.
\]
Then for any $P(x)\in \mathbb{R}[x]$ and $f(x)\in C(X)$,
\begin{equation}\label{1/11formula2}
\lim_{N\rightarrow \infty}\frac{1}{N}\sum_{n=1}^{N}\mu(n)e(P(n))f(T^{n}x_{0})=0.\
\end{equation}
\end{lemma}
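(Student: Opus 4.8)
The plan is to transfer the arithmetic estimate of Lemma~\ref{the distribution of mobius in short arithmetic interval} into the topological setting via a shift-averaging argument, fixing the relevant invariant measure $\nu$ (and with it the rigidity data $\{h_j\}$, $\{s_j\}$) only after passing to a convergent subsequence of empirical measures. Write $S_N(f):=\frac1N\sum_{n=1}^N\mu(n)e(P(n))f(T^nx_0)$. Since the space of Borel probability measures on the compact metric space $X$ is compact, every sequence $N_k\to\infty$ admits a subsequence (not relabelled) along which $\frac1{N_k}\sum_{n=1}^{N_k}\delta_{T^nx_0}\to\nu$ weak-$*$ for some $\nu\in\mathcal M(x_0;X,T)$, and it suffices to prove $S_{N_k}(f)\to0$ along every such subsequence. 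So fix $\nu$ as above, let $\mathcal F=\mathcal F_\nu\subseteq C(X)$ be the dense set provided by the hypothesis, and given $\varepsilon>0$ choose $g\in\mathcal F$ with $\|f-g\|_\infty<\varepsilon$. Since $|\mu(n)e(P(n))|\le1$ we have $|S_N(f)-S_N(g)|\le\|f-g\|_\infty<\varepsilon$ for all $N$, so it remains to bound $\limsup_k|S_{N_k}(g)|$, using the sequences $\{h_j\}$, $\{s_j\}$ attached to $(\nu,g)$.

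Fix $j$. Shifting the summation variable by $ls_j$ gives, for every $1\le l\le h_j$,
\[
\frac1N\sum_{n=1}^N\mu(n+ls_j)e\bigl(P(n+ls_j)\bigr)g(T^{n+ls_j}x_0)=S_N(g)+O\!\left(\frac{h_js_j\|g\|_\infty}{N}\right),
\]
and averaging over $l$ and swapping the order of summation yields the decomposition
\[
S_N(g)=\frac1N\sum_{n=1}^N g(T^nx_0)\cdot\frac1{h_j}\sum_{l=1}^{h_j}\mu(n+ls_j)e\bigl(P(n+ls_j)\bigr)\;+\;\mathrm{Err}_N\;+\;O\!\left(\frac{h_js_j\|g\|_\infty}{N}\right),
\]
where $\mathrm{Err}_N=\frac1N\sum_{n=1}^N\frac1{h_j}\sum_{l=1}^{h_j}\mu(n+ls_j)e(P(n+ls_j))\bigl(g(T^{n+ls_j}x_0)-g(T^nx_0)\bigr)$. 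By the triangle inequality and Cauchy--Schwarz,
\[
|\mathrm{Err}_N|^2\le\frac1{h_j}\sum_{l=1}^{h_j}\frac1N\sum_{n=1}^N F_l(T^nx_0),\qquad F_l:=\bigl|g\circ T^{ls_j}-g\bigr|^2\in C(X),
\]
so along $N_k$ the weak-$*$ convergence gives $\limsup_k|\mathrm{Err}_{N_k}|^2\le\frac1{h_j}\sum_{l=1}^{h_j}\int F_l\,d\nu=\frac1{h_j}\sum_{l=1}^{h_j}\|g\circ T^{ls_j}-g\|_{L^2(\nu)}^2=:\varepsilon_j^2$, which tends to $0$ as $j\to\infty$ by the averaged-rigidity hypothesis on $g$. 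For the remaining term, $|g|\le\|g\|_\infty$ and Cauchy--Schwarz give
\[
\limsup_k\left|\frac1{N_k}\sum_{n\le N_k}g(T^nx_0)\cdot\frac1{h_j}\sum_{l=1}^{h_j}\mu(n+ls_j)e(P(n+ls_j))\right|\le\|g\|_\infty\left(\limsup_{N}\frac1N\sum_{n=1}^N\left|\frac1{h_j}\sum_{l=1}^{h_j}\mu(n+ls_j)e(P(n+ls_j))\right|^2\right)^{1/2},
\]
and Lemma~\ref{the distribution of mobius in short arithmetic interval} bounds the right-hand side by $\ll\|g\|_\infty\bigl(\tfrac{s_j}{\varphi(s_j)}\tfrac{\log\log h_j}{\log h_j}\bigr)^{1/2}=:\|g\|_\infty\delta_j$, with $\delta_j\to0$ by the hypothesis $\tfrac{\log\log h_j}{\log h_j}\tfrac{s_j}{\varphi(s_j)}\to0$.

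Putting the two estimates together, $\limsup_k|S_{N_k}(g)|\ll\|g\|_\infty\delta_j+\varepsilon_j$ for every $j$; letting $j\to\infty$ forces $\limsup_k|S_{N_k}(g)|=0$, hence $\limsup_k|S_{N_k}(f)|\le\varepsilon$, and as $\varepsilon>0$ was arbitrary, $S_{N_k}(f)\to0$. Since the subsequence was arbitrary, $S_N(f)\to0$, which is \eqref{1/11formula2}. The main point requiring care — rather than a genuine analytic difficulty, since the hard input is Lemma~\ref{the distribution of mobius in short arithmetic interval} — is the order of the limiting operations: the measure $\nu$, the dense set $\mathcal F_\nu$, and the rigidity sequences $\{h_j\}$, $\{s_j\}$ must all be selected \emph{after} the convergent subsequence of empirical measures has been fixed, so that weak-$*$ convergence can legitimately be used to evaluate $\lim_k\frac1{N_k}\sum_{n\le N_k}F_l(T^nx_0)$ in the error term, whereas the main term is controlled by the full-sequence estimate of Lemma~\ref{the distribution of mobius in short arithmetic interval} and is insensitive to the choice of subsequence. (A harmless variant avoids picking a limsup-realizing subsequence and simply argues that every subsequence has a further subsequence along which $S_N(f)\to0$.)
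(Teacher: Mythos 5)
Your proof is correct and follows essentially the same route as the paper's: pass to a subsequence of empirical measures converging to some $\nu\in\mathcal M(x_0;X,T)$ (fixing $\nu$, $\mathcal F_\nu$, and the rigidity data only afterward, as you rightly emphasize), approximate $f$ by $g\in\mathcal F_\nu$ uniformly, use the shift-averaging decomposition, bound the error term by Cauchy--Schwarz and weak-$*$ convergence against the averaged rigidity, and bound the main term by Cauchy--Schwarz and Lemma~\ref{the distribution of mobius in short arithmetic interval}. The only cosmetic difference is that the paper argues by contradiction (assuming $|S_{N_m}(f)|\geq 2c_0$) while you argue directly via the ``every subsequence has a further subsequence'' principle; the analytic content is identical.
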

\begin{proof}
Assume on the contrary, there is a $P(x)\in \mathbb{R}[x]$ and $f(x)\in C(X)$ such that formula (\ref{1/11formula2}) does not hold,
then there is a constant $c_{0}>0$ and an increasing sequence $\{N_{m}\}_{m=1}^{\infty}$ of positive integers such that
\begin{equation}\label{two}
\frac{1}{N_{m}}\Bigg|\sum_{n=0}^{N_{m}-1}\mu(n)e(P(n))f(T^{n}x_{0})\Bigg|\geq 2c_{0}.
\end{equation}
Note that $X$ is a compact metric space. Then there is a subsequence of $\{N_{m}\}_{m=1}^{\infty}$ (denoted by $\{N_{m}\}_{m=1}^{\infty}$ again for convenience) and a $T$-invariant measure $\nu$ on $X$ such that
$\nu_{N_{m}}=\frac{1}{N_{m}}\sum_{n=0}^{N_{m}-1}\delta_{T^{n}x_{0}}$ weak* converges to $\nu$ as $m\rightarrow \infty$, i.e., for any $h(x)\in C(X)$,
\[\lim_{m\rightarrow \infty}\int_{X}h(x)d\nu_{N_{m}}=\lim_{m\rightarrow \infty}\frac{1}{N_{m}}\sum_{n=0}^{N_{m}-1}h(T^{n}x_{0})=\int_{X}h(x)d\nu.\]
By formula (\ref{two}) and the condition stated in this lemma, there is a $g(x)\in C(X)$ and sequences $\{h_{j}\}_{j=1}^{\infty}$, $\{s_{j}\}_{j=1}^{\infty}$ of positive integers satisfying
\begin{equation}\label{1/11noon}
\lim_{j\rightarrow \infty}\frac{\log\log h_{j}}{\log h_{j}}\frac{s_{j}}{\varphi(s_{j})}=0
\end{equation}
and
\begin{equation}\label{three}
\lim_{j\rightarrow\infty}\frac{1}{h_{j}}\sum_{l=1}^{h_{j}}\|g\circ T^{ls_{j}}-g\|_{L^2(\nu)}^2=0.
\end{equation}
Moreover, for $m=1,2,\ldots$,
\begin{equation}\label{four}
\frac{1}{N_{m}}\Bigg|\sum_{n=0}^{N_{m}-1}\mu(n)e(P(n))g(T^{n}x_{0})\Bigg|\geq c_{0}.
\end{equation}
Let $\widetilde{g}(n)=g(T^{n}x_{0})$. Note that
\begin{align*}
\lim_{m\rightarrow \infty}\frac{1}{N_{m}}\sum_{n=0}^{N_{m}-1}|\widetilde{g}(n+ls_{j})-\widetilde{g}(n)|^2=&\lim_{m\rightarrow \infty}\frac{1}{N_{m}}\sum_{n=0}^{N_{m}-1}|g(T^{ls_{j}+n}x_{0})-g(T^{n}x_{0})|^2\\
=&\|g\circ T^{ls_{j}}-g\|_{L^2(\nu)}^2.
\end{align*}
So by equation (\ref{three}),
\begin{equation}\label{seven}
\lim_{j\rightarrow\infty}\frac{1}{h_{j}}\sum_{l=1}^{h_{j}}\lim_{m\rightarrow \infty}\frac{1}{N_{m}}\sum_{n=0}^{N_{m}-1}|\widetilde{g}(n+ls_{j})-\widetilde{g}(n)|^2=0.
\end{equation}
Let $\delta=\frac{c_{0}}{2(\|\widetilde{g}\|_{l^{\infty}}+1)}$.
Then by Lemma \ref{the distribution of mobius in short arithmetic interval} and equations (\ref{1/11noon}), (\ref{seven}), there is a positive integer $j_{0}$ and a positive integer $m_{0}$ such that whenever $m>m_{0}$,
\[\frac{1}{N_{m}}\sum_{n=0}^{N_{m}-1}\Bigg|\frac{1}{h_{j_{0}}}\sum_{l=1}^{h_{j_{0}}}\mu(n+ls_{j_{0}})e\big(P(n+ls_{j_{0}})\big)\Bigg|^2<\delta^2/9\]
and
\[\frac{1}{h_{j_{0}}}\sum_{l=1}^{h_{j_{0}}}\frac{1}{N_{m}}\sum_{n=0}^{N_{m}-1}|\widetilde{g}(n+ls_{j_{0}})-\widetilde{g}(n)|^2<\delta^2/9.\]
Then by the Cauchy-Schwarz inequality,
\begin{equation}\label{0817formula31/11}
\frac{1}{N_{m}}\sum_{n=0}^{N_{m}-1}\Bigg|\frac{1}{h_{j_{0}}}\sum_{l=1}^{h_{j_{0}}}\mu(n+ls_{j_{0}})e\big(P(n+ls_{j_{0}})\big)\Bigg|<\delta/3
\end{equation}
and
\begin{equation}\label{0817formula21/11}
\frac{1}{h_{j_{0}}}\sum_{l=1}^{h_{j_{0}}}\frac{1}{N_{m}}\sum_{n=0}^{N_{m}-1}
|\widetilde{g}(n+ls_{j_{0}})-\widetilde{g}(n)|<\delta/3.
\end{equation}
Observe that
\[\frac{1}{N_{m}}\sum_{n=0}^{N_{m}-1}\mu(n)e\big(P(n)\big)\widetilde{g}(n)=\frac{1}{N_{m}}\sum_{n=0}^{N_{m}-1}\mu(n+ls_{j_{0}})e\big(P(n+ls_{j_{0}})\big)\widetilde{g}(n+ls_{j_{0}})
+O(\frac{ls_{j_{0}}}{N_{m}}).\]
Then there is a positive integer $m_{1}$ such that whenever $m>m_{1}$,
\begin{equation}\label{0817formula41/11}
\Bigg|\frac{1}{N_{m}}\sum_{n=0}^{N_{m}-1}\mu(n)e\big(P(n)\big)\widetilde{g}(n)-\frac{1}{N_{m}}\sum_{n=0}^{N_{m}-1}\frac{1}{h_{j_{0}}}\sum_{l=1}^{h_{j_{0}}}\mu(n+ls_{j_{0}})
e\big(P(n+ls_{j_{0}})\big)
\widetilde{g}(n+ls_{j_{0}})\Bigg|<\delta/3.
\end{equation}
Let $m_{2}=\max\{m_{0},m_{1}\}$. Then by formulas (\ref{0817formula31/11}), (\ref{0817formula21/11}) and (\ref{0817formula41/11}), for $m>m_{2}$, we have
\begin{align*}
\Bigg|\frac{1}{N_{m}}\sum_{n=0}^{N_{m}-1}\mu(n)e\big(P(n)\big)\widetilde{g}(n)\Bigg|<&\Bigg|\frac{1}{N_{m}}\sum_{n=0}^{N_{m}-1}\frac{1}{h_{j_{0}}}\sum_{l=1}^{h_{j_{0}}}
\mu(n+ls_{j_{0}})
e\big(P(n+ls_{j_{0}})\big)
\widetilde{g}(n+ls_{j_{0}})\Bigg|+\delta/3 \nonumber\\
\leq &\Bigg|\frac{1}{N_{m}}\sum_{n=0}^{N_{m}-1}\frac{1}{h_{j_{0}}}\sum_{l=1}^{h_{j_{0}}}\mu(n+ls_{j_{0}})
e\big(P(n+ls_{j_{0}})\big)
\big(\widetilde{g}(n+ls_{j_{0}})-\widetilde{g}(n)\big)\Bigg|\nonumber\\
&+\Bigg|\frac{1}{N_{m}}\sum_{n=0}^{N_{m}-1}\frac{1}{h_{j_{0}}}\sum_{l=1}^{h_{j_{0}}}\mu(n+ls_{j_{0}})
e\big(P(n+ls_{j_{0}})\big)\widetilde{g}(n)
\Bigg|+\delta/3 \nonumber\\
\leq &\frac{1}{N_{m}}\sum_{n=0}^{N_{m}-1}\frac{1}{h_{j_{0}}}\sum_{l=1}^{h_{j_{0}}}
|\widetilde{g}(n+ls_{j_{0}})-\widetilde{g}(n)|\nonumber\\&+\frac{\|\widetilde{g}\|_{l^{\infty}}}{N_{m}}\sum_{n=0}^{N_{m}-1}\Bigg|\frac{1}{h_{j_{0}}}\sum_{l=1}^{h_{j_{0}}}\mu(n+ls_{j_{0}})
e\big(P(n+ls_{j_{0}})\big)\Bigg|
+\delta/3\nonumber\\<&c_{0}/2.
\end{align*}
This contradicts formula (\ref{four}). Hence we obtain the claim in this lemma.
\end{proof}
\begin{remark}\label{remark to Theorem 1.6a}
{\rm With the help of Theorem \ref{the estimate of general multiplicative functions}, we see that the results listed in Lemmas \ref{the distribution of mobius in short arithmetic interval} and \ref{disjointfromasymptoticallyperiodicfunctions1b} also hold if $\mu$ is replaced by a more general ``non-pretentious'' 1-bounded  multiplicative function (such as the Liouville function and $\mu(n)\chi(n)$, where $\chi$ is a given Dirichlet character).}
\end{remark}
Note that functions realized in affine linear flows on compact abelian groups of zero topological entropy can be approximated by finite linear combinations of piece-wise polynomial phases (see \cite[Subsections 2.1, 2.2]{JP}).
As an application of Lemma \ref{disjointfromasymptoticallyperiodicfunctions1b},
we now prove Theorem \ref{disjointfromasymptoticallyperiodicfunctionsb}.
\begin{proof}[Proof of Theorem \ref{disjointfromasymptoticallyperiodicfunctionsb}]
In the following, we prove that for any $F\in C(X_{1}\times X_{2})$ and $x\in X_{1}\times X_{2}$,
\begin{equation}\label{equation for Proposition 1.8a1}
\lim_{N\rightarrow \infty}\frac{1}{N}\sum_{n=1}^{N}\mu(n)F(T^{n}x)=0,
\end{equation}
where $T=T_{1}\times T_{2}$. Let $\mathcal{S}$ be the *-subalgebra (i.e., subalgebra closed under the complex conjugation) of $C(X_{1}\times X_{2})$ generated by $\{\psi \cdot f:\psi$ is a character of $X_{1}$, $f\in C(X_{2})$ $\}$.
By the Stone-Weierstrass theorem (see, e.g., [KR83, Theorem 3.4.14]), $\mathcal{S}$ is dense in $C(X_{1}\times X_{2})$. So it suffices to show that for each $F\in C(X_{1}\times X_{2})$ in form of $\psi \cdot f$, equation (\ref{equation for Proposition 1.8a})  holds. Given $x=(x_{1},x_{2})$, there is a positive integer $w$ such that
\begin{equation}\label{0215formula1}
\frac{1}{N}\sum_{n\leq N}\mu(n)F(T^{n}x)=\frac{1}{N}\sum_{l=0}^{w-1}\sum_{\substack{n\leq N\\n\equiv l(mod~w)}}\mu(n)e(\phi(n))f(T_{2}^{n}x_{2}),
\end{equation}
where $\phi(n)$'s are some polynomials in $n$ with coefficients depending on $w$, $l$ \cite[equation (2.10)]{JP}.
By Remark \ref{remark to Theorem 1.6a}, Lemma \ref{disjointfromasymptoticallyperiodicfunctions1b} holds with $\mu(n)$ replaced by $\mu(n)1_{n\equiv l(mod~w)}$. Hence formula (\ref{0215formula1}) converges to zero as $N\rightarrow \infty$.
\end{proof}
\section{Proof of Theorem \ref{0707theorem1}}
In this section, we first show a sufficient condition to Problem \ref{sarnak's conjecture for rigid dynamical systems}, which states the validity of Sarnak's conjecture for product flows between affine linear flows on compact abelian groups of zero topological entropy and all rigid dynamical systems.

\begin{proposition}\label{a sufficient condition for problem2a}
Assume that for any given $P(x)\in \mathbb{R}[x]$,
\be  \label{prop for problem 2a}
\limsup_{N \rightarrow \infty} \frac{1}{Ns} \sum_{a=1}^s \sum_{n=1}^{N} \left|\sum_{\substack{ m=n+1 \\ m \equiv a(mod~s)} }^{n+hs} \mu(m) e(P(m)) \right| = o(h)
\ee
where the little ``o" term is independent of $s\geq 1$. Then the M\"obius Disjointness Conjecture holds for the product flow $(X_{1}\times X_{2},T_{1}\times T_{2})$, where $(X_{1},T_{1})$ is an affine linear flow on a compact abelian group of zero entropy and $(X_{2},T_{2})$ is a flow such that for any $x_{2}\in X_{2}$ and any $\nu\in \mathcal{M}(x_{2};X_{2},T_{2})$, $(X_{2},\nu,T_{2})$ is rigid.
\end{proposition}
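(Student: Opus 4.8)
The plan is to reprise the contradiction argument of Lemma~\ref{disjointfromasymptoticallyperiodicfunctions1b} (and of the proof of Theorem~\ref{disjointfromasymptoticallyperiodicfunctionsb}), letting hypothesis~(\ref{prop for problem 2a}) take over the role played there by the speed conditions~(\ref{restriction1b})--(\ref{asymptotical periodicity1b}). The new ingredient is that~(\ref{prop for problem 2a}) provides cancellation over short arithmetic progressions with a rate of decay that is \emph{uniform in the modulus}; this severs the argument from any arithmetic requirement on the rigidity times, so that mere qualitative rigidity of $(X_{2},\nu,T_{2})$ suffices.

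First I would record the M\"obius input in the exact form needed. Fix $P\in\mathbb{R}[x]$, an integer $w\ge1$ and a residue $l$. Given $s\ge1$ and $h$, merge the congruences $m\equiv a(s)$ and $m\equiv l(w)$ into a single congruence modulo $S:=\mathrm{lcm}(s,w)$ by the Chinese Remainder Theorem and round the window length $hs$ up to a multiple of $S$; applying~(\ref{prop for problem 2a}) with modulus $S$, and using that the ``$o$'' there does not see $S$, one obtains a function $\varepsilon(h)=\varepsilon_{P,w}(h)$, \emph{independent of $s$}, with $\varepsilon(h)\to0$ as $h\to\infty$, such that, after feeding this through the reindexing identity~(\ref{an identity}) of Lemma~\ref{the distribution of mobius in short arithmetic interval},
\[
\limsup_{N\to\infty}\frac1N\sum_{n=1}^{N}\Bigg|\frac1h\sum_{l'=1}^{h}\mu(n+l's)\,1_{n+l's\equiv l(w)}\,e\big(P(n+l's)\big)\Bigg|\ \le\ \varepsilon(h).
\]
This is the substitute for Lemma~\ref{the distribution of mobius in short arithmetic interval}: the bound $\tfrac{s}{\varphi(s)}\tfrac{\log\log h}{\log h}$ there is replaced by something that decays in $h$ with no dependence on $s$.

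Next I would prove the variant of Lemma~\ref{disjointfromasymptoticallyperiodicfunctions1b}: if $(X,T)$ is a topological dynamical system, $x_{0}\in X$, and $(X,\nu,T)$ is rigid for every $\nu\in\mathcal{M}(x_{0};X,T)$, then $\frac1N\sum_{n\le N}\mu(n)1_{n\equiv l(w)}e(P(n))f(T^{n}x_{0})\to0$ for all $P\in\mathbb{R}[x]$, all $w,l$, and all $f\in C(X)$. Suppose not: along a subsequence $\{N_{m}\}$, the measures $\frac1{N_{m}}\sum_{n=0}^{N_{m}-1}\delta_{T^{n}x_{0}}$ converge weak$^*$ to an invariant $\nu$ while the relevant averages stay $\ge 2c_{0}$; choose $g\in C(X)$ with $\|f-g\|_{\infty}<c_{0}$, so the $g$-averages stay $\ge c_{0}$. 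By rigidity there is one sequence $\{n_{j}\}$ with $r_{j}:=\|g\circ T^{n_{j}}-g\|_{L^{2}(\nu)}\to0$; put $s_{j}:=n_{j}$ and let $h_{j}\to\infty$ slowly enough that $h_{j}^{2}r_{j}^{2}\to0$. Since $\nu$ is $T$-invariant, composition with $T$ is an $L^{2}(\nu)$-isometry, so $\|g\circ T^{l'n_{j}}-g\|_{L^{2}(\nu)}\le l'r_{j}$ and hence $\frac1{h_{j}}\sum_{l'=1}^{h_{j}}\|g\circ T^{l's_{j}}-g\|_{L^{2}(\nu)}^{2}\le h_{j}^{2}r_{j}^{2}\to0$; thus $(X,\nu,T)$ satisfies~(\ref{asymptotical periodicity1b}) along $\{h_{j},s_{j}\}$. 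From here the proof of Lemma~\ref{disjointfromasymptoticallyperiodicfunctions1b} carries over with only cosmetic changes: with $\widetilde g(n)=g(T^{n}x_{0})$, shifting the summation variable shows that $\frac1{N_{m}}\sum_{n}\mu(n)1_{n\equiv l(w)}e(P(n))\widetilde g(n)$ is, up to $O(h_{j}s_{j}/N_{m})$, the average over $1\le l'\le h_{j}$ of the same sum with $n$ replaced by $n+l's_{j}$; splitting $\widetilde g(n+l's_{j})=\big(\widetilde g(n+l's_{j})-\widetilde g(n)\big)+\widetilde g(n)$ and using Cauchy--Schwarz together with weak$^*$ convergence, the first piece is $\le\big(\frac1{h_{j}}\sum_{l'=1}^{h_{j}}\|g\circ T^{l's_{j}}-g\|_{L^{2}(\nu)}^{2}\big)^{1/2}+o_{m}(1)$ and the second is $\le\|g\|_{\infty}\,\varepsilon(h_{j})+o_{m}(1)$ by the estimate above. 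Taking $j$ then $m$ large drives the $g$-averages below $c_{0}$, a contradiction.

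Finally, given $F\in C(X_{1}\times X_{2})$ and $x=(x_{1},x_{2})$, the Stone--Weierstrass theorem reduces everything to $F=\psi\cdot f$ with $\psi$ a character of $X_{1}$ and $f\in C(X_{2})$; then, as in \cite[equation (2.10)]{JP}, there are an integer $w$ and polynomials $\phi_{0},\dots,\phi_{w-1}$ with
\[
\frac1N\sum_{n\le N}\mu(n)F(T^{n}x)=\sum_{l=0}^{w-1}\frac1N\sum_{n\le N}\mu(n)\,1_{n\equiv l(w)}\,e\big(\phi_{l}(n)\big)\,f\big(T_{2}^{\,n}x_{2}\big),
\]
and each term tends to $0$ by the variant above applied to $(X_{2},T_{2})$ and $x_{2}$, whose hypothesis is exactly the assumption that every $\nu\in\mathcal{M}(x_{2};X_{2},T_{2})$ gives a rigid system. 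I expect the one genuinely delicate step to be the first: checking that imposing the extra congruence $m\equiv l(w)$ and renormalising after passing to the modulus $\mathrm{lcm}(s,w)$ does not reintroduce a dependence on $s$ in the ``$o(h)$'' of~(\ref{prop for problem 2a}). Everything else is a rearrangement of the machinery of Section~\ref{4}, the conceptual content being that an $s$-uniform short-progression estimate allows the short-progression modulus $s_{j}$ to be taken equal to an arbitrary rigidity time $n_{j}$, with only $h_{j}$ required to grow slowly.
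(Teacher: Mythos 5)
Your proposal is correct and follows essentially the same route as the paper's proof: reduce by Stone--Weierstrass and \cite[equation (2.10)]{JP} to $\mu(n)1_{n\equiv l(w)}e(P(n))f(T_2^n x_2)$, pass to a weak$^*$ limit measure, use rigidity and $T$-invariance with the triangle inequality to bootstrap a single rigidity time to (\ref{asymptotical periodicity1b}), and combine with the $s$-uniform short-progression estimate via the shifting trick and Cauchy--Schwarz. The only noteworthy variation is in how the extra congruence $n\equiv l(w)$ is absorbed: you merge moduli via CRT (with the $\gcd(s,w)\in\{1,\dots,w\}$ observation keeping the $o(h)$ rate independent of $s$), whereas the paper expands $1_{n\equiv l(w)}$ as a finite linear combination of additive characters $e(na/w)$ and folds these into the polynomial phase $P$; both are valid and preserve the uniformity in $s$ that is the crux of the argument.
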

\begin{proof}
Let $P(x)\in \mathbb{R}[x]$. Let $(X,T)$ be a flow satisfying that for any $x_{0}\in X$ and any $\nu\in \mathcal{M}(x_{0};X,T)$, $(X,\nu,T)$ is rigid. We claim that for any $x_{0}\in X$ and $g\in C(X)$,
\begin{equation}\label{115formula2}
\lim_{N\rightarrow \infty}\frac{1}{N}\sum_{n=1}^{N}\mu(n)e(P(n))g(T^{n}x_{0})=0.
\end{equation}
Suppose that the sequence $\frac{1}{N_{m}}\sum_{n=0}^{N_{m}-1}\delta_{T^{n}x_{0}}$ weak* converges to a Borel probability measure $\nu$ on $X$. Let $\widetilde{g}(n)=g(T^{n}x_{0})$. For any given $\epsilon>0$, let $\delta=\frac{\epsilon}{3(\|\widetilde{g}\|_{l^{\infty}}+1)}$.
Assume that equation (\ref{prop for problem 2a}) holds, by equation (\ref{an identity}), we have that
\[\limsup_{N\rightarrow \infty}\frac{1}{N}\sum_{n=1}^{N}\Bigg|\frac{1}{h}\sum_{l=1}^{h}\mu(n+ls)e\big(P(n+ls)\big)\Bigg|\]
tends to zero uniformly with respect to $s$ as $h\rightarrow \infty$.  By the above and the rigidity of $(X,\nu,T)$, there is a positive integer $h_{0}$ such that
\begin{equation}\label{12/10formula1}
\limsup_{N\rightarrow \infty}\frac{1}{N}\sum_{n=1}^{N}\Bigg|\frac{1}{h_{0}}\sum_{l=1}^{h_{0}}\mu(n+ls)e\big(P(n+ls)\big)\Bigg|<\delta
\end{equation}
holds for any $s\geq 1$, and there is a positive integer $s_{j_{0}}$ such that
\[\|g\circ T^{s_{j_{0}}}-g\|_{L^2(\nu)}^2=\lim_{m\rightarrow \infty}\frac{1}{N_{m}}\sum_{n=1}^{N_{m}}|\widetilde{g}(n+s_{j_{0}})-\widetilde{g}(n)|^2<\Big(\frac{2\delta}{h_{0}+1}\Big)^2.\]
Then by the Cauchy-Schwarz inequality,
\begin{equation}\label{12/10afternoon}
\limsup_{m\rightarrow \infty}\frac{1}{N_{m}}\sum_{n=1}^{N_{m}}|\widetilde{g}(n+s_{j_{0}})-\widetilde{g}(n)|<\frac{2\delta}{h_{0}+1}.
\end{equation}
By formula (\ref{12/10afternoon}) and the triangle inequality,
\begin{align}\label{12/10formula2}
\begin{aligned}
&\limsup_{m\rightarrow \infty}\frac{1}{N_{m}}\sum_{n=1}^{N_{m}}\frac{1}{h_{0}}\sum_{l=1}^{h_{0}}
|\widetilde{g}(n+ls_{j_{0}})-\widetilde{g}(n)|\leq \frac{1}{h_{0}}\sum_{l=1}^{h_{0}}\limsup_{m\rightarrow \infty}\frac{1}{N_{m}}\sum_{n=1}^{N_{m}}|\widetilde{g}(n+ls_{j_{0}})-\widetilde{g}(n)|\\
&\leq\frac{1}{h_{0}}\sum_{l=1}^{h_{0}}\sum_{i=1}^{l}\limsup_{m\rightarrow \infty}\frac{1}{N_{m}}\sum_{n=1}^{N_{m}}|\widetilde{g}(n+is_{j_{0}})-\widetilde{g}(n+(i-1)s_{j_{0}})|\\
&=\frac{1}{h_{0}}\sum_{l=1}^{h_{0}}\sum_{i=1}^{l}\limsup_{m\rightarrow \infty}\frac{1}{N_{m}}\sum_{n=1}^{N_{m}}|\widetilde{g}(n+s_{j_{0}})-\widetilde{g}(n)|<\delta.
\end{aligned}
\end{align}
We apply an argument similar to the proof of Lemma \ref{disjointfromasymptoticallyperiodicfunctions1b} with formulas (\ref{0817formula31/11}) and (\ref{0817formula21/11}) are replaced by (\ref{12/10formula1}) and (\ref{12/10formula2}), respectively, then
\[\limsup_{m\rightarrow \infty}\Bigg|\frac{1}{N_{m}}\sum_{n=1}^{N_{m}}\mu(n)e\big(P(n)\big)g(T^{n}x_{0})\Bigg|<\epsilon.\]
Letting $\epsilon\rightarrow 0$, we have
$\lim_{m\rightarrow \infty}\frac{1}{N_{m}}\sum_{n=1}^{N_{m}}\mu(n)e\big(P(n)\big)g(T^{n}x_{0})=0$. Since this equality holds for any sequence $\{N_{m}\}_{m}$ such that $\frac{1}{N_{m}}\sum_{n=0}^{N_{m}-1}\delta_{T^{n}x_{0}}$ weak* converges, equation (\ref{115formula2}) holds. It is obvious that the indicator function $1_{n\equiv l(mod~w)}$ is periodic for any integer $l$ and any positive integer $w$. Then $1_{n\equiv l(mod~w)}$ is a finite combination of $e(\frac{na}{w})$ for $a=1,2,\ldots,w$. So equation (\ref{115formula2}) also holds with $\mu(n)$ replaced by $\mu(n)1_{n\equiv l(mod~w)}$. By equations (\ref{0215formula1}) and (\ref{115formula2}), we obtain the claim in this proposition.
\end{proof}
\begin{remark}\label{related to Chowla's conjecture}{\rm
When $P(x)$ is a constant, condition (\ref{a sufficient condition for problem2a}) in the above proposition is implied by Chowla's conjecture. In fact, under the Chowla conjecture, for integer $s\geq 1$, \[\limsup_{N \rightarrow \infty} \frac{1}{N} \sum_{n=1}^{N} \left|\sum_{l=1}^h \mu(n+ls)\right|^2=6h/\pi^2.\] Note that
\[
\frac{1}{Ns} \sum_{a=1}^s \sum_{n=1}^{N} \left|\sum_{\substack{m=n+1\\ m \equiv a(mod~s)} }^{n+hs} \mu(m)\right|=\frac{1}{N} \sum_{n=1}^{N} \left|\sum_{l=1}^h \mu(n+ls)\right| + O(1).
\]
So by the Cauchy-Schwarz inequality, the left-hand side of the above equation $\leq c\sqrt{h}$,
where $c$ is an absolute constant. It is obvious that this bound is independent of $s$.
}\end{remark}

In the following, we shall prove Theorem \ref{0707theorem1}, which gives a positive answer to Problem \ref{sarnak's conjecture for rigid dynamical systems} under the logarithmic average. Let $X$ be a compact metric space, $T:X\rightarrow X$ a continuous map and $x_{0}\in X$. Then $C(X)$ is countably generated as a C*-algebra \cite[Remark 3.4.15]{KR}.  For any integer $N\geq 1$, consider the bounded linear functional $\frac{1}{\log N}\sum_{n=1}^{N}\frac{1}{n}\delta_{T^{n}x_{0}}: f\mapsto \frac{1}{\log N}\sum_{n=1}^{N}\frac{1}{n}f(T^nx_{0})$ for any $f\in C(X)$. Note that the norms of these functionals are uniformly bounded with respect to $N$, by Alaoglu-Bourbaki theorem, the bounded ball in $C(X)^{\sharp}$, the set of all bounded linear functionals on $C(X)$, is weak*-compact. Moreover, the bounded ball is also metrizable since $C(X)$ is countably generated. So there exist (weak*) limit points of $\{\frac{1}{\log N}\sum_{n=1}^{N}\frac{1}{n}\delta_{T^{n}x_{0}}\}_{N=2}^{\infty}$. For a limit point along the subsequence $\{N_{m}\}_{m=1}^{\infty}$ in $C(X)^{\sharp}$, denoted by $\rho$, by the Riesz representation theorem and \cite[Theorem 6.8]{wal}, it is not hard to show that there is a unique $T$-invariant Borel probability measure $\nu$ on $X$ such that for any $f(x)\in C(X)$,
\begin{equation}\label{0708formula1}
\rho(f)=\lim_{m\rightarrow \infty}\frac{1}{\log N_{m}}\sum_{n=1}^{N_{m}}\frac{f(T^{n}x_{0})}{n}=\int_{X}f(x)d\nu(x).
\end{equation}

The next one is a general version of Theorem \ref{0707theorem1}.

\begin{proposition}\label{0706afternoonformula8}
Let $(X,T)$ be a topological dynamical system such that for any $x_{0}\in X$ and any $\nu$ in the weak*
closure of $\{\frac{1}{\log N}\sum_{n=1}^{N}\frac{1}{n}\delta_{T^{n}x_{0}}: N=2,\ldots\}$ in the space of Borel measures\footnote{Though $\frac{1}{\log N}\sum_{n=1}^{N}\frac{1}{n}\delta_{T^{n}x_{0}}$ is not a probability measure, it is not hard to show that $\nu$ is a $T$-invariant Borel probability measure on X.} on $X$, $(X,\nu,T)$ is rigid. Then the logarithmically averaged M\"obius Disjointness Conjecture holds for the product flow between $(X,T)$ and any affine linear flow on a compact abelian group of zero entropy.
\end{proposition}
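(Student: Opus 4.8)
The plan is to transcribe the proof of Theorem~\ref{disjointfromasymptoticallyperiodicfunctionsb} (that is, of the more general Lemma~\ref{disjointfromasymptoticallyperiodicfunctions1b}) into the logarithmic-average setting, replacing every Ces\`aro average $\frac1N\sum_{n\le N}$ by the logarithmic average $\frac1{\log N}\sum_{n\le N}\frac1n$ and replacing the short-interval input Lemma~\ref{the distribution of mobius in short arithmetic interval} by a logarithmic counterpart. First I would carry out the same two reductions as in the proof of Theorem~\ref{disjointfromasymptoticallyperiodicfunctionsb}: by the Stone--Weierstrass theorem it suffices to treat $F=\psi\cdot f$ with $\psi$ a character of $X_1$ and $f\in C(X_2)$, and then the structure of affine linear flows (\cite[eq.~(2.10)]{JP}) writes $\frac1{\log N}\sum_{n\le N}\frac{\mu(n)F(T^nx)}{n}$ as a finite sum of logarithmically averaged sums $\frac1{\log N}\sum_{n\le N,\ n\equiv l\,(w)}\frac{\mu(n)e(\phi(n))f(T_2^n x_2)}{n}$ with $\phi\in\mathbb{R}[x]$; expanding each residue condition $1_{n\equiv l\,(w)}$ into additive characters $e(an/w)$ and absorbing them into the phase reduces everything to proving
\[
\lim_{N\to\infty}\frac1{\log N}\sum_{n=1}^{N}\frac{\mu(n)e(P(n))g(T_2^n x_2)}{n}=0\qquad\text{for all }P\in\mathbb{R}[x],\ x_2\in X_2,\ g\in C(X_2).
\]

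The second step is the logarithmic substitute for Lemma~\ref{the distribution of mobius in short arithmetic interval}: for integers $s\ge1$, $h\ge3$ and $P\in\mathbb{R}[x]$,
\[
\limsup_{N\to\infty}\frac1{\log N}\sum_{n=1}^{N}\frac1n\Bigl|\frac1h\sum_{l=1}^{h}\mu(n+ls)e\bigl(P(n+ls)\bigr)\Bigr|^{2}\ \ll\ \frac1h,
\]
with an implied constant depending only on $\deg P$, and in particular independent of $s$. To prove this I would expand the square: the $h$ diagonal terms contribute $\ll1/h$, since $\frac1{\log N}\sum_{n\le N}\frac{|\mu(n+ls)|^2}{n}=\frac1{\log N}\sum_{m\le N}\frac{|\mu(m)|^2}{m}+o(1)\ll1$; and each of the finitely many off-diagonal terms, after a shift of the summation variable costing only $O(\log(hs)/\log N)$, has the shape $\frac1{\log N}\sum_{m\le N}\frac{\mu(m)\mu(m+l's)e(Q(m))}{m}$ with $l'=l_2-l_1\ne0$ and $Q\in\mathbb{R}[x]$ of degree $\le\deg P-1$, which tends to $0$ by the Frantzikinakis--Host bound on logarithmically averaged self-correlations of $\mu$ twisted by polynomial phases (see equation~(\ref{215formula41}); when $\deg P=0$ the phase is trivial and this is already Tao's theorem~\cite{Tao16}). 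Note that, unlike the Ces\`aro estimate of Lemma~\ref{the distribution of mobius in short arithmetic interval}, the bound here is $\ll1/h$ with \emph{no} factor $\frac{s}{\varphi(s)}$ and no coupling between $h$ and $s$; this is precisely why Proposition~\ref{0706afternoonformula8} (hence Theorem~\ref{0707theorem1}) needs no speed hypothesis on the rigidity, in contrast with condition~(\ref{restriction1b}).

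With these two ingredients in hand the remaining argument is a word-for-word logarithmic transcription of the proof of Lemma~\ref{disjointfromasymptoticallyperiodicfunctions1b}. Suppose the last display fails for some $P,x_2,g$, so $\bigl|\frac1{\log N_m}\sum_{n\le N_m}\frac{\mu(n)e(P(n))g(T_2^n x_2)}{n}\bigr|\ge c_0>0$ along some subsequence $\{N_m\}$; passing to a further subsequence I may assume $\frac1{\log N_m}\sum_{n\le N_m}\frac1n\delta_{T_2^n x_2}$ converges weak* to a $T_2$-invariant Borel probability measure $\nu$ for which identity~(\ref{0708formula1}) holds (the existence of such limit points and these facts were established just before the statement). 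By hypothesis $(X_2,\nu,T_2)$ is rigid, so there is a sequence $\{s_j\}$ of positive integers with $\|g\circ T_2^{s_j}-g\|_{L^2(\nu)}\to0$; the triangle inequality and $T_2$-invariance of $\nu$ give $\|g\circ T_2^{ls_j}-g\|_{L^2(\nu)}\le l\|g\circ T_2^{s_j}-g\|_{L^2(\nu)}$, whence $\frac1h\sum_{l=1}^{h}\|g\circ T_2^{ls_j}-g\|_{L^2(\nu)}^2\to0$ as $j\to\infty$ for each fixed $h$. Since $y\mapsto|g(T_2^{ls}y)-g(y)|^2\in C(X_2)$, identity~(\ref{0708formula1}) turns this into a statement about the logarithmic Birkhoff averages of $\widetilde g(n)=g(T_2^n x_2)$ along $\{N_m\}$; combining it with the second step via Cauchy--Schwarz against the weight $1/n$ and controlling the shift errors $O(\log(ls_j)/\log N_m)$ exactly as in Lemma~\ref{disjointfromasymptoticallyperiodicfunctions1b}, one bounds $\bigl|\frac1{\log N_m}\sum_{n\le N_m}\frac{\mu(n)e(P(n))\widetilde g(n)}{n}\bigr|$ by $c_0/2$ on taking $h$, then $j$, then $m$ sufficiently large, a contradiction.

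The step I expect to be the main obstacle is the second one: one must ensure the Frantzikinakis--Host input is strong enough to kill logarithmically averaged self-correlations of $\mu$ twisted by \emph{every} polynomial phase, since $P(n+l_1s)-P(n+l_2s)$ ranges over all polynomials of degree up to $\deg P-1$ as $l_1,l_2,s$ and $P$ vary. Everything else is essentially bookkeeping: the two reductions are identical to the Ces\`aro case, and the only genuinely new point is that shifting the summation variable by a bounded amount changes a logarithmic average by $O(1/\log N)$ (rather than $O(1/N)$), which is still negligible in the limit.
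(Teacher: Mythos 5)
Your proposal is correct and follows essentially the same route as the paper: reduce via Stone--Weierstrass and \cite[eq.~(2.10)]{JP} to twisted sums $\mu(n)e(P(n))g(T_2^n x_2)$, replace Lemma~\ref{the distribution of mobius in short arithmetic interval} by the logarithmic analogue derived from Frantzikinakis--Host/Tao (the paper's eq.~(\ref{215formula41})), observe the resulting $\ll 1/h$ bound is free of $s$, and then transcribe the contradiction argument of Lemma~\ref{disjointfromasymptoticallyperiodicfunctions1b} (or more precisely Proposition~\ref{a sufficient condition for problem2a}) to the logarithmic average using the weak$^*$ limit measure from (\ref{0708formula1}), the telescoping rigidity estimate, and the $O(m/\log N)$ shift-invariance of logarithmic averages (the paper's eq.~(\ref{invariant2})). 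Every key ingredient you list matches the paper's proof of Proposition~\ref{0706afternoonformula8}.
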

\begin{proof}
Let $P(x)\in \mathbb{R}[x]$. We claim that for any $x_{0}\in X$ and $g\in C(X)$,
\begin{equation}\label{20220706formula2}
\lim_{N\rightarrow \infty}\frac{1}{N}\sum_{n=1}^{N}\frac{\mu(n)e(P(n))g(T^{n}x_{0})}{n}=0.
\end{equation}
Assume the contrary that formula (\ref{20220706formula2}) does not hold. Then there is a $\delta>0$ and a subsequence $\{N_{m}\}_{m=1}^{\infty}$ of natural numbers such that
\begin{equation}\label{706formula1}
\frac{1}{\log N_{m}} \Bigg|\sum_{n=1}^{N_{m}} \frac{\mu(n) e(P(n))g(T^nx_{0})}{n} \Bigg|>\delta.
\end{equation}
By (\ref{0708formula1}), there is a subsequence of $\{N_{m}\}_{m=1}^{\infty}$ (denoted by $\{N_{m}\}_{m=1}^{\infty}$ again for convenience) and a $T$-invariant Borel probability measure $\nu$ on $X$, such that
$\nu_{N_{m}}=\frac{1}{\log N_{m}}\sum_{n=1}^{N_{m}}\frac{1}{n}\delta_{T^{n}x_{0}}$ weak* converges to $\nu$ as $m\rightarrow \infty$, i.e., for any $f\in C(X)$,
\[\lim_{m\rightarrow \infty}\int_{X}f(x)d\nu_{N_{m}}=\lim_{m\rightarrow \infty}\frac{1}{\log N_{m}}\sum_{n=1}^{N_{m}}\frac{f(T^{n}x_{0})}{n}=\int_{X}f(x)d\nu.\]
Let $\widetilde{g}(n)=g(T^{n}x_{0})$. Then by the condition in this proposition and the above equation, there is a sequence $\{n_{j}\}_{j=1}^{\infty}$ of positive integers such that
\begin{equation}\label{0706afternoonformula1}
\lim_{j\rightarrow \infty}\lim_{m\rightarrow \infty}\frac{1}{\log N_{m}}\sum_{n=1}^{N_{m}}\frac{1}{n}|\widetilde{g}(n+n_{j})-\widetilde{g}(n)|^2=\lim_{j\rightarrow\infty}\int_{X}|g\circ T^{n_{j}}(x)-g(x)|^2d\nu(x)=0.
\end{equation}
On the other hand, by the remark after \cite[Corollary 1.5]{FH} and \cite[Theorem 2]{Tao16}, we have
\begin{equation}\label{215formula41}
\lim_{N \rightarrow \infty} \frac{1}{\log N} \sum_{n=1}^{N} \frac{\mu(n+h_1) \mu(n+h_2)e(P(n)) }{n}= 0
\end{equation}
for any $h_1 \neq h_2 \in \mathbb{N}$.
Hence
\begin{align}
\begin{aligned}
&\lim_{N\rightarrow \infty}\frac{1}{\log N}\sum_{n=1}^{N}\frac{1}{n}\Bigg|\frac{1}{h}\sum_{l=1}^{h}\mu(n+ls)e\big(P(n+ls)\big)\Bigg|^2\\
&=\frac{1}{h^2}\sum_{l_{1}=1}^{h}\sum_{l_{2}=1}^{h}\lim_{N\rightarrow \infty}\frac{1}{\log N}\sum_{n=1}^{N}\frac{1}{n}\mu(n+l_{1}s)\mu(n+l_{2}s)e\big(P(n+l_{1}s)-P(n+l_{2}s)\big)\\
&=\frac{1}{h^2}\sum_{l=1}^{h}\lim_{N\rightarrow \infty}\frac{1}{\log N}\sum_{n=1}^{N}\frac{\mu^2(n+ls)}{n}+\frac{1}{h^2}\sum_{l_{1}=1}^{h}\sum_{\substack{l_{2}=1\\l_{2}\neq l_{1}}}^{h}\lim_{N\rightarrow \infty}\frac{1}{\log N}\sum_{n=1}^{N}\frac{\mu(n+l_{1}s)\mu(n+l_{2}s)e(Q(n))}{n}\\
&=\frac{6}{\pi^2}\frac{1}{h},
\end{aligned}
\end{align}
where $Q(n)=P(n+l_{1}s)-P(n+l_{2}s)$ is some polynomial with coefficients depending on $l_{1},l_{2},s$.
So by the Cauchy-Schwarz inequality,
\begin{equation}\label{1212formula41}
\limsup_{N\rightarrow \infty}\frac{1}{\log N}\sum_{n=1}^{N}\frac{1}{n}\Bigg|\frac{1}{h}\sum_{l=1}^{h}\mu(n+ls)e\big(P(n+ls)\big)\Bigg|=o_{h}(1),
\end{equation}
where the ``$o$" term is independent of $s=1,2,\ldots$.
Note that for any bounded function $\widetilde{f}(n)$,
\begin{equation}\label{invariant2}
\frac{1}{\log N}\sum_{n=1}^{N}\frac{\widetilde{f}(n)}{n}=\frac{1}{\log N}\sum_{n=1}^{N}\frac{\widetilde{f}(n+m)}{n+m}+O(\frac{m}{\log N})=\frac{1}{\log N}\sum_{n=1}^{N}\frac{\widetilde{f}(n+m)}{n}+O(\frac{m}{\log N}).
\end{equation}
Then by equations (\ref{0706afternoonformula1}), (\ref{1212formula41}), (\ref{invariant2}), and an almost identical proof to Proposition \ref{a sufficient condition for problem2a} with a slight difference by replacing the standard average form by the logarithmic average, we obtain (\ref{20220706formula2}) and so the claim in this proposition.
\end{proof}

\begin{proof}[Proof of Theorem \ref{0707theorem1}]
In order to apply Proposition \ref{0706afternoonformula8} to prove this theorem, we just need to check that for any $\nu$ in the weak*
closure of $\{\frac{1}{\log N}\sum_{n=1}^{N}\frac{1}{n}\delta_{T^{n}x_{0}}: N=1,2,\ldots\}$, $(X,T,\nu)$ is rigid. This comes from the fact that $\nu$ is a $T$-invariant Borel probability measure on $X$ and the condition in this theorem.
\end{proof}

As an application of Theorem \ref{0707theorem1}, we prove Corollary \ref{0707corollary1}.
\begin{proof}[Proof of Corollary \ref{0707corollary1}]
By lifting $\psi: \mathbb{T}^{1}\rightarrow \mathbb{T}^{1}$ to the real line, we can write $\psi(x)=cx+\psi_{1}(x)$ for all $x\in \mathbb{T}^{1}$, where $c\in \mathbb{Z}$ and $\psi_{1}:\mathbb{T}^{1}\rightarrow \mathbb{R}$ is a continuous 1-periodic function. Note that the set of linear combinations of characters on $\mathbb{T}^2$ are dense in $C(\mathbb{T}^2)$. Together with (\ref{0215formula1}), it is sufficient to prove that for any fixed $P(x)\in \mathbb{R}[x]$, $(x_{1},x_{2})\in X$ and $b=(b_{1},b_{2})\in \mathbb{Z}^2$,
\begin{equation}\label{0911formula1}
\lim_{N\rightarrow \infty}\frac{1}{\log N}\sum_{n=1}^{N}\frac{\mu(n)e(P(n))e\big(\langle b, T_{\alpha, \psi}^{n}(x_{1},x_{2})\rangle\big)}{n}=0.
\end{equation}
Suppose that $T_{\alpha, \psi}^{n}(x_{1},x_{2})=(y_{1}(n),y_{2}(n))$, where $y_{1}(n)=x_{1}+n\alpha$ and $y_{2}(n)=c\frac{n(n-1)}{2}\alpha+cnx_{1}+x_{2}+\sum_{j=0}^{n-1}\psi_{1}(x_{1}+j\alpha)$. It follows that
\[
\langle b, T_{\alpha, \psi}^{n}(x_{1},x_{2})\rangle=b_{1}y_{1}(n)+b_{2}y_{2}(n)=Q(n)+b_{1}y_{1}(n)+b_{2}(x_{2}+\sum_{j=0}^{n-1}\psi_{1}(x_{1}+j\alpha)),
\]
where $Q(n)=b_{2}(c\frac{n(n-1)}{2}\alpha+cnx_{1})$. Let $\widetilde{P}(n)=P(n)+Q(n)$. We rewrite (\ref{0911formula1}) as
\begin{equation}\label{0911formula3}
\lim_{N\rightarrow \infty}\frac{1}{\log N}\sum_{n=1}^{N}\frac{\mu(n)e(\widetilde{P}(n))e\big(\langle b, T_{\alpha, \psi_{1}}^{n}(x_{1},x_{2})\rangle\big)}{n}=0.
\end{equation}
For the case $\alpha\in \mathbb{Q}$, an argument similar to that of \cite[Proposition 3.1]{JP} leads to (\ref{0911formula3}).  For $\alpha\in \mathbb{R}\backslash \mathbb{Q}$,
it follows from \cite[Section 6.1]{Fav} that $(\mathbb{T}^2,T_{\alpha,\psi_{1}},\nu)$ is rigid for any $\nu$, a $T_{\alpha,\psi_{1}}$-invariant Borel probability measure on $\mathbb{T}^2$. By Theorem \ref{0707theorem1}, we have equation (\ref{0911formula3}).
\end{proof}

\bigskip
\appendix
\section{Proof of Proposition \ref{small portion with not equidistributed}}\label{aproofofaproposition}
In this section, we first show that the proof of formula (\ref{10/28formula4}) can be reduced to proving Proposition \ref{small portion with not equidistributed}. Then we give a proof of Proposition \ref{small portion with not equidistributed}. These results were used in Section \ref{the third main result}.
\begin{repproposition}{small portion with not equidistributed}
Let $C_{0}$ be a large enough constant only depending on $d$, $B_{2}\geq 10$, $B\geq 10C_{0}^2B_{2}$, $H\geq \max(W^{B},2^{10r_{+}})$. Let $s\geq 1$, $j=0,\ldots,q-1$ and $t=1,\ldots,W^2$. Let $r_{-}$, $r_{+}$ be as in formula (\ref{definition of P and Q}). For $r\in (r_{-},r_{+}]$, $n_{1}\in \mathcal{N}$ and a prime number $p_{1}\in (2^{r-1},2^{r}]$ with $(p_{1},s)=1$,
the set $\Omega_{r,n_1,p_1,B_2}$ is defined to be the set of all triples:
\[
(a,n,p) \in [1,s] \times \mathcal{N}\times \{p:p\in (2^{r-1},2^{r}],(p,s)=1\}.
\]
such that

(\romannumeral1) $p$ is prime;

(\romannumeral2) $|\mathcal{A}_{n_1,n,p_1,p,t} |\geq 2^{-r} W^{-(B_2+2)} Hs $;

(\romannumeral3)
\[\Bigg|\sum_{\substack{l\in \mathcal{A}_{n_{1},n,p_{1},p,t} \\ l\equiv js+a(qs)}}\widetilde{F}_{p_{1}a,n_{1},\textbf{j}_{1},t}(g'(n_{1},p_{1}l-n_1)\Gamma')
\overline{\widetilde{F}_{pa,n,\textbf{j},t}}(g'(n,pl-n)\Gamma') \Bigg|\geq \frac{W^{-B_2}|\mathcal{A}_{n_{1},n,p_{1},p,t}|}{qs}.\]
Then
\[|\Omega_{r,n_1,p_1,B_2}| < 2^r W^{-B_2-2} Hs^2.\]
\end{repproposition}
We first show that Proposition \ref{small portion with not equidistributed} implies formula (\ref{10/28formula4}).
\begin{proof}[Proof of inequality (\ref{10/28formula4}) (Assume Proposition \ref{small portion with not equidistributed})]
We first decompose (\ref{10/28formula5}) into three parts $S_{1}+S_{2}+S_{3}$ defined below. Then we estimate each term.

By formula (\ref{11/7morning}),
\begin{align*}
 S_{1}:= &\sum_{j=0}^{q-1}\sum_{t=1}^{W^2}\sum_{a=1}^s\sum_{ \substack{ p_1,p_2 \in(2^{r-1}, 2^r] \\ (p_1p_2, s) =1}}\sum_{\substack{n_{1}, n_{2}\in \mathcal{N}\\|\mathcal{A}_{n_1,n_{2},p_1,p_{2},t} |<2^{-r} W^{-(B_2+2)} Hs}}
 \\
&\Bigg|\sum_{\substack{l\in \mathcal{A}_{n_{1},n_{2},p_{1},p_2,t} \\ l\equiv js+a(qs)}}\widetilde{F}_{p_{1}a,n_{1},\textbf{j}_{1},t}(g'(n_{1},p_{1}l-n_1)\Gamma')
\overline{\widetilde{F}_{p_{2}a,n_{2},\textbf{j}_{2},t}}(g'(n_{2},p_{2}l-n_2)\Gamma') \Bigg|\\
&\ll sqW^2\sum_{n_{1}\in \mathcal{N}}\sum_{\substack{p_{1},p_{2}\in (2^{r-1},2^{r}]\\(p_{1}p_{2},s)=1}}\sum_{\substack{n_{2}\in \mathcal{N}\\ \mathcal{A}_{n_1,n_{2},p_1,p_{2},t}\neq \emptyset}}(|\mathcal{A}_{n_1,n_{2},p_1,p_{2},t}|/qs)\\
&\ll2^r W^{-(B_2+2)} H^2 N s^2.
\end{align*}

By Proposition \ref{small portion with not equidistributed},
\begin{align*}
 S_{2}:=&\sum_{j=0}^{q-1}\sum_{t=1}^{W^2}\sum_{n_{1}\in \mathcal{N}}\sum_{\substack{p_{1}\in (2^{r-1},2^{r}]\\(p_{1},s)=1}}\sum_{\substack{a,n_{2},p_{2}\\(a,n_{2},p_{2})\in \Omega_{r,n_1,p_1,B_2}\\|\mathcal{A}_{n_1,n_{2},p_1,p_{2},t} |\geq 2^{-r} W^{-(B_2+2)} Hs}}
\\
&\Bigg|\sum_{\substack{l\in \mathcal{A}_{n_{1},n_{2},p_{1},p_2,t} \\ l\equiv js+a(qs)}}\widetilde{F}_{p_{1}a,n_{1},\textbf{j}_{1},t}(g'(n_{1},p_{1}l-n_1)\Gamma')
\overline{\widetilde{F}_{p_{2}a,n_{2},\textbf{j}_{2},t}}(g'(n_{2},p_{2}l-n_2)\Gamma') \Bigg|\\
\ll & qW^2(2^{-r}W^{-2}Hs/qs)N2^{r}( 2^r W^{-B_2-2} Hs^2)\ll 2^{r}NH^2s^2W^{-(B_{2}+2)}.
\end{align*}

By the definition of $\Omega_{r,n_1,p_1,B_2}$ and formula (\ref{11/7morning}),
\begin{align*}\label{11/3formula13}
S_{3}:=&\sum_{j=0}^{q-1}\sum_{t=1}^{W^2}\sum_{n_{1}\in \mathcal{N}}\sum_{\substack{p_{1}\in (2^{r-1},2^{r}]\\(p_{1},s)=1}}\sum_{\substack{a,n_{2},p_{2}\\(a,n_{2},p_{2})\notin \Omega_{r,n_1,p_1,B_2}\\|\mathcal{A}_{n_1,n_{2},p_1,p_{2},t} |\geq 2^{-r} W^{-(B_2+2)} Hs}}\\
&\Bigg|\sum_{\substack{l\in \mathcal{A}_{n_{1},n_{2},p_{1},p_2,t} \\ l\equiv js+a(qs)}}\widetilde{F}_{p_{1}a,n_{1},\textbf{j}_{1},t}(g'(n_{1},p_{1}l-n_1)\Gamma')
\overline{\widetilde{F}_{p_{2}a,n_{2},\textbf{j}_{2},t}}(g'(n_{2},p_{2}l-n_2)\Gamma') \Bigg|\\
\ll&\sum_{j=0}^{q-1}\sum_{t=1}^{W^2}s\sum_{n_{1}\in \mathcal{N}}\sum_{\substack{p_{1},p_{2}\in (2^{r-1},2^{r}]\\(p_{1}p_{2},s)=1}}\sum_{\substack{n_{2}\in \mathcal{N}\\ \mathcal{A}_{n_1,n_{2},p_1,p_{2},t}\neq \emptyset}}\frac{W^{-B_2}|\mathcal{A}_{n_{1},n,p_{1},p,t}|}{qs}\\
\ll& NH^2s^22^{r}W^{-B_{2}-2}.
\end{align*}
Then we have \[S\leq (\ref{10/28formula5})=S_{1}+S_{2}+S_{3}\ll 2^{r}NH^2s^2W^{-(B_{2}+2)},\]
which is claimed in formula (\ref{10/28formula4}).
\end{proof}
We now prove Proposition \ref{small portion with not equidistributed}.
\begin{proof}[Proof of Proposition \ref{small portion with not equidistributed}]
For any given $j, t, p_1, n_1$, assume the contrary that \[|\Omega_{r,n_1,p_1,B_2}|\geq 2^r W^{-B_2-2} Hs^2.\]
Write $\mathcal{A}_{n_{1},p_{1},t}=[m_{1},m_{2}]$ with $m_{1}<m_{2}$ positive integers and $|m_{1}-m_{2}|\leq 2\cdot2^{-r}W^{-2}Hs$. Let $c_{0}=qs(\lfloor\frac{m_{1}}{qs}\rfloor-2)+js$.
Then $\{l:l\in \mathcal{A}_{n_{1},p_{1},t},l\equiv js+a(qs)\}\subseteq \{qsl+c_{0}+a:l\in [L]\}$, where $L=\lfloor\frac{m_{2}}{qs}\rfloor-\lfloor\frac{m_{1}}{qs}\rfloor+3\leq 4\cdot2^{-r}W^{-3}H+4$. Let $(a,n,p)\in \Omega_{r,n_1,p_1,B_2}$. By conditions $(\romannumeral2)$ and $(\romannumeral3)$, we have
\begin{equation}\label{10/31eveningformula8}
\Bigg|\sum_{l\in \mathcal{A}'_{n,p}}\widetilde{F}_{p_{1}a,n_{1},\textbf{j}_{1},t}(g'(n_{1},p_{1}(qsl+c_{0}+a)-n_1)\Gamma')
\overline{\widetilde{F}_{pa,n,\textbf{j},t}}(g'(n,p(qsl+c_{0}+a)-n)\Gamma') \Bigg|\geq W^{-B_2}|\mathcal{A}'_{n,p}|,
\end{equation}
where $\mathcal{A}'_{n,p}$ is a subinterval of $[L]$ with length $\lfloor\frac{|\mathcal{A}_{n_{1},n,p_{1},p,t}|}{qs}\rfloor$ or $\lfloor\frac{|\mathcal{A}_{n_{1},n,p_{1},p,t}|}{qs}\rfloor+1$. In the following, we apply the method used in \cite[Section 7]{HW} to our situation with appropriate modifications.

Denote by $g_{a,n_{1},p_{1}}(u)=g'(n_{1},qsu+p_{1}(c_{0}+a)-n_1)$ and $g_{a,n,p}(u)=g'(n,qsu+p(c_{0}+a)-n)$. By formula (\ref{10/31eveningformula8}), the sequence $\{(g_{a,n_{1},p_{1}}(p_{1}l)\Gamma',g_{a,n,p}(pl)\Gamma')\}_{l\in \mathcal{A}'_{n,p}}$ is not $2^{-2}W^{-B_{2}}$-equidistributed in $G'/\Gamma'\times G'/\Gamma'$. By \cite[Lemma 2.10]{HW}, we can find a short length $L'_{n,p}\geq 2^{-5}W^{-2B_{2}}L$ such that the sequence $\{(g_{a,n_{1},p_{1}}(p_{1}l)\Gamma',g_{a,n,p}(pl)\Gamma')\}_{l\in [L'_{n,p}]}$ is not $2^{-5}W^{-2B_{2}}$-equidistributed in $G'/\Gamma'\times G'/\Gamma'$. Then by Lemma \ref{Weyl rule}, there is a character $\eta_{a,n,p}\in \mathbb{Z}^{2}$ of $G'/\Gamma'\times G'/\Gamma'$ with $0<|\eta_{a,n,p}|<W^{O(B_{2})}$ such that
\begin{equation}\label{10/31eveningformula1}
\|\eta_{a,n,p}\circ (g_{a,n_{1},p_{1}}(p_{1}l)\Gamma',g_{a,n,p}(pl)\Gamma')\|_{C^{\infty}[L'_{n,p}]}\ll W^{O(B_{2})}.
\end{equation}
As $L'_{n,p}\gg W^{-2B_{2}}L$, this implies that
\begin{equation}\label{10/31eveningformula2}
\|\eta_{a,n,p}\circ (g_{a,n_{1},p_{1}}(p_{1}l)\Gamma',g_{a,n,p}(pl)\Gamma')\|_{C^{\infty}[L]}\ll W^{O(B_{2})}.
\end{equation}
By the pigeonhole principle, we can find a non-zero additive character $\eta\in \mathbb{Z}^{2}$ of $G'/\Gamma'\times G'/\Gamma'$ such that for a set $\Omega_{r,n_1,p_1,B_2}^{*}\subseteq \Omega_{r,n_1,p_1,B_2}$ with
\begin{equation}\label{12/06formula}
|\Omega_{r,n_1,p_1,B_2}^{*}|\geq 2^{r}W^{-O(B_{2})}Hs^2,
\end{equation}
formula (\ref{10/31eveningformula2}) holds and $\eta_{a,n,p}=\eta$.
Let $\mathcal{P}_{r,n_{1},p_{1}}=\{p: 2^{r-1}< p\leq 2^{r},(p,s)=1$, there are at least $W^{-O(B_{2})}Hs^2$ choices of $(a,n)$ such that $(a,n,p)\in \Omega_{r,n_1,p_1,B_2}^{*} \}$. Since
\[\Omega_{r,n_1,p_1,B_2}^{*}\leq |\mathcal{P}_{r,n_{1},p_{1}}|\cdot s \cdot|\{n: n\in \mathcal{N},\mathcal{A}_{n_1,n,p_1,p,t}\neq \emptyset \}|+2^{r-1}W^{-O(B_{2})}Hs^2,\]
we have
\begin{equation}\label{10/31formula4}
|\mathcal{P}_{r,n_{1},p_{1}}|\gg 2^{r}W^{-O(B_{2})}.
\end{equation}
Furthermore, for any $p\in \mathcal{P}_{r,n_{1},p_{1}}$, there are at least $W^{-O(B_{2})}s$ numbers of $a\in [s]$ such that $(a,n,p)\in \Omega_{r,n_1,p_1,B_2}^{*}$ for some $n$ by formula (\ref{11/7morning}).
By Remark \ref{form of g'}, we may suppose that $g'(n,h)=\sum_{i=0}^{d'}\alpha_{i}'(n+h)^{i}$ for some $d'$ with $1\leq d'\leq d$. Write $\eta=\eta_{1}\oplus\eta_{2}$. Due to the observation that the space of Lipschitz functions on $\mathbb{R}/\mathbb{Z}$ is essentially spanned by the space of pure phase functions $e(m\theta)$ and by a similar argument to the proof of \cite[Proposition 3.1]{GT2}, we have $\eta_{1}\neq 0$ and $\eta_{2}\neq 0$.
Suppose
\[\eta_{1}\circ (g_{a,n_{1},p_{1}}(p_{1}l)\Gamma')=\sum_{\substack{i_{1},i_{2}\geq 0\\i_{1}+i_{2}\leq d'}}\gamma_{i_{1},i_{2}}a^{i_{1}}l^{i_{2}}.\]
and
\begin{equation}\label{10/31formula1}
\eta_{2}\circ (g_{a,n,p}(u)\Gamma')=\sum_{\substack{l_{1},l_{2},l_{1}'\geq 0\\l_{1}+l_{2}\leq d',l_{1}'\leq l_{1}}}\beta_{l_{1},l_{2},l_{1}'}p^{l_{1}}a^{l_{1}'}u^{l_{2}}.
\end{equation}
Then
\begin{equation}\label{10/31formula2}
\eta\circ (g_{a,n_{1},p_{1}}(p_{1}l)\Gamma',g_{a,n,p}(pl)\Gamma')=\sum_{\substack{i_{1},i_{2}\geq 0\\i_{1}+i_{2}\leq d'}}\gamma_{i_{1},i_{2}}a^{i_{1}}l^{i_{2}}+\sum_{i=0}^{d'}\sum_{w=0}^{d'-i}\sum_{w'=0}^{w}\beta_{w,i,w'}p^{i+w}l^{i}a^{w'}.
\end{equation}

Now we treat the above as a polynomial in $l$. By formula (\ref{10/31eveningformula2}) and Lemma \ref{coefficients}, for any $(a,n,p)\in \Omega^{*}_{r,n_{1},p_{1},B_{2}}$, there is a positive integer $Z_{1}\ll O(1)$ such that for all $1\leq i\leq d'$,
\begin{equation}\label{10/31formula3}
\Bigg\|Z_{1}\Bigg(\sum_{w'=0}^{d'-i}\gamma_{w',i}a^{w'}+\sum_{w'=0}^{d'-i}\Big(\sum_{w\geq w'}^{d'-i}\beta_{w,i,w'}p^{w+i}\Big)a^{w'}\Bigg)\Bigg\|_{\mathbb{R}/\mathbb{Z}}\ll W^{O(B)}L^{-i}\ll 2^{ir}W^{O(B_{2})}H^{-i}.
\end{equation}
Using the pigeonhole principle, one can make $Z_{1}$ independent of $(a,n,p)$ after substituting $\Omega^{*}_{r,n_{1},p_{1},B_{2}}$ with a smaller subset whose cardinality still satisfies formula (\ref{12/06formula}). By formula (\ref{10/31formula4}), for any $p\in \mathcal{P}_{r,n_{1},p_{1}}$, there is a set $\mathcal{C}_{r,n_{1},p_{1},p}\subseteq [s]$ with
\begin{equation}\label{11/2formula1}
|\mathcal{C}_{r,n_{1},p_{1},p}|\gg W^{-O(B_{2})}s,
\end{equation}
such that for any $a\in \mathcal{C}_{r,n_{1},p_{1},p}$, $(a,p)$ satisfies formula (\ref{10/31formula3}). We now view \[Z_{1}\Bigg(\sum_{w'=0}^{d'-i}\gamma_{w',i}a^{w'}+\sum_{w'=0}^{d'-i}\Big(\sum_{w\geq w'}^{d'-i}\beta_{w,i,w'}p^{w+i}\Big)a^{w'}\Bigg)\] as a polynomial in $a$. Then applying Lemma \ref{CNnorm} (with $\epsilon=2^{ir}W^{O(B_{2})}H^{-i}$ and $\delta=W^{-O(B_{2})}$) to formula (\ref{10/31formula3}), there is a positive integer $Z_{2}$ with $Z_{2}\ll W^{O(B_{2})}$ such that for any $i=1,\ldots,d'$,
\[\Bigg\|Z_{2}Z_{1}\Bigg(\sum_{w'=0}^{d'-i}\gamma_{w',i}a^{w'}+\sum_{w'=0}^{d'-i}\Big(\sum_{w\geq w'}^{d'-i}\beta_{w,i,w'}p^{w+i}\Big)a^{w'}\Bigg)(mod~\mathbb{Z})\Bigg\|_{C^{\infty}[s]}\ll 2^{ir}W^{O(B_{2})}H^{-i}.\]
By Lemma \ref{coefficients} again, there is a positive integer $Z_{3}$ with $Z_{3}\ll O(1)$ such that for $i=1,\ldots,d'$ and $1\leq w'\leq d'-i$,
\begin{equation}\label{10/31formula5}
\Bigg\|Z_{3}Z_{2}Z_{1}\Big(\gamma_{w',i}+\sum_{w\geq w'}^{d'-i}\beta_{w,i,w'}p^{w+i}\Big)\Bigg\|_{\mathbb{R}/\mathbb{Z}}\ll  2^{ir}W^{O(B_{2})}H^{-i}s^{-w'}.
\end{equation}
Again by the pigeonhole principle, one can make $Z_{2}$ and $Z_{3}$ both independent of $p\in \mathcal{P}_{r,n_{1},p_{1}}$ after substituting $\mathcal{P}_{r,n_{1},p_{1}}$ with a smaller subset whose cardinality remains satisfying formula (\ref{10/31formula4}).
By formulas (\ref{10/31formula3}) and (\ref{10/31formula5}), and the triangle inequality, we have that for $i=1,\ldots,d'$ and $w'=0$,
\begin{equation}\label{11/2formula8}
\Bigg\|Z_{3}Z_{2}Z_{1}\Big(\gamma_{0,i}+\sum_{w\geq 0}^{d'-i}\beta_{w,i,0}p^{w+i}\Big)\Bigg\|_{\mathbb{R}/\mathbb{Z}}\ll  2^{ir}W^{O(B_{2})}H^{-i}.
\end{equation}
Applying Lemmas \ref{CNnorm} and \ref{coefficients} again to the following polynomials
\[Z_{3}Z_{2}Z_{1}\Big(\gamma_{w',i}+\sum_{w\geq w'}^{d'-i}\beta_{w,i,w'}p^{w+i}\Big)\]
of $p\in[2^{r}]$, then there are positive integers $Z_{4},Z_{5}$ with $Z_{4}\ll W^{O(B_{2})}$ and $Z_{5}\ll O(1)$, such that for any $i=1,\ldots,d'$, $0\leq w'\leq d'-i$ and $w'\leq w\leq d'-i$,
\begin{equation}\label{11/2formula3}
\|Z_{5}Z_{4}Z_{3}Z_{2}Z_{1}\beta_{w,i,w'}\|_{\mathbb{R}/\mathbb{Z}}\ll  2^{-wr}W^{O(B_{2})}H^{-i}s^{-w'}.
\end{equation}

Let $Z=Z_{5}Z_{4}Z_{3}Z_{2}Z_{1}$. Then $Z\ll W^{O(B_{2})}$ and the character $Z\eta_{2}$ satisfies
\begin{equation}\label{10/31formula6}
|Z\eta_{2}|\ll |Z||\eta|\ll W^{O(B_{2})}.
\end{equation}
Now we choose a sufficiently large positive number $C_{0}=O(1)$ which serves as the implicit constant in the exponent $O(B_{2})$ appearing in the above. We write formula (\ref{11/2formula3}) as
\begin{equation}\label{formula9}
\|Z\beta_{l_{1},l_{2},l_{1}'}\|_{\mathbb{R}/\mathbb{Z}}\ll 2^{-l_{1}r}W^{C_{0}B_{2}}H^{-l_{2}}s^{-l_{1}'},
\end{equation}
which holds for any $l_{1}\geq 0,l_{2}\geq 1$, $l_{1}+l_{2}\leq d'$ and $l_{1}'\leq l_{1}$.

By formula (\ref{11/7morning}), the number of pairs $(n,p)$ with $(n,p)\in \Omega_{r,n_1,p_1,B_2}^{*}$ is at most $2^{r}Hs$. Since $|\Omega_{r,n_1,p_1,B_2}^{*}|\geq 2^{r}W^{-C_{0}B_{2}}Hs^2$, there is a pair $(n,p)\in \mathcal{N}\times \{p:p\in (2^{r-1},2^{r}],(p,s)=1\}$ such that at least $W^{-C_{0}B_{2}}s$ choices of $a\in [s]$ have the property $(a,n,p)\in \Omega_{r,n_1,p_1,B_2}^{*}$. For such $(a,n,p)$, we consider the set $\mathcal{U}_{a}=\{u\in \mathbb{Z}:qsu+p(c_{0}+a)-n\in [Hs]\}$. By formula (\ref{10/31eveningformula8}), there is an $l_{0}\in [L]$ such that $qspl_{0}+p(c_{0}+a)-n\in [Hs]$. Since $L\leq 4\cdot2^{-r}W^{-3}H+4$, $-Hs< p(c_{0}+a)-n\leq Hs$. So for any $u\in \mathcal{U}_{a}$, $|u|\leq 2H/q$. Note that $\lfloor\frac{H}{q} \rfloor-2\leq |\mathcal{U}_{a}|\leq \lfloor\frac{H}{q} \rfloor+2$. Fix any subinterval $\mathcal{U}_{a}'\subseteq \mathcal{U}_{a}$ with length $\lfloor\frac{2W^{-2C_{0}B_{2}-3}H}{q}\rfloor$.  For any $u_{1},u_{2}\in \mathcal{U}_{a}'$, by formulas (\ref{10/31formula1}) and (\ref{formula9}), we have the following estimate.
\begin{align}\label{10/31formula7}
\begin{aligned}
&\|Z\eta_{2}\circ g'(n,qsu_{1}+p(c_{0}+a)-n)\Gamma'-Z\eta_{2}\circ g'(n,qsu_{2}+p(c_{0}+a)-n)\Gamma'\|_{\mathbb{R}/\mathbb{Z}}\\
=&\|Z\sum_{\substack{l_{1}\geq 0,l_{2}\geq 1,l_{1}'\geq 0\\l_{1}+l_{2}\leq d',l_{1}'\leq l_{1}}}\beta_{l_{1},l_{2},l_{1}'}p^{l_{1}}a^{l_{1}'}(u_{1}-u_{2})\sum_{h=0}^{l_{2}-1}u_{1}^{h}u_{2}^{l_{2}-1-h}\|_{\mathbb{R}/\mathbb{Z}}\\
\ll &\sum_{\substack{l_{1}\geq 0,l_{2}\geq 1,l_{1}'\geq 0\\l_{1}+l_{2}\leq d',l_{1}'\leq l_{1}}}W^{-C_{0}B_{2}-3}q^{-l_{2}}\leq C_{0} W^{-C_{0}B_{2}}.
\end{aligned}
\end{align}
Set $F(x)=e(Z\eta_{2}(x))$, which is a Lipschitz function from $G'/\Gamma'$ to $\mathbb{C}$ with $\|F\|_{Lip}\leq 2\pi W^{C_{0}B_{2}}$ by formula (\ref{10/31formula6}). Let $\mathcal{B}_{a}'=\{qsu+p(c_{0}+a)-n:u\in \mathcal{U}_{a}'\}$. Then by formula (\ref{10/31formula7}),
\[|\mathbb{E}_{h\in \mathcal{B}_{a}'}F(g'(n,h)\Gamma')|>1-C_{0}W^{-C_{0}B_{2}}> 1/2.\]

Note that $\mathcal{B}_{a}'$ is an arithmetic progression in $\mathcal{B}_{a}=\{h\in [Hs]:n+h\equiv pa(s)\}$ with length greater than $W^{-2C_{0}B_{2}-4}H$. By the above, it follows that the sequence $\{g'(n,h)\Gamma'\}_{h\in \mathcal{B}_{a}}$ is not totally $\min (W^{-2C_{0}B_{2}-4},\frac{1}{4\pi}W^{-C_{0}B_{2}})$-equidistributed in $G'/\Gamma'$. By the assumption in the proposition, $\min (W^{-2C_{0}B_{2}-4},\frac{1}{4\pi}W^{-C_{0}B_{2}})\geq W^{-C_{0}^{-1}B}$. So there is an $n\in \mathcal{N}$ such that for at least $W^{-C_{0}B_{2}}s$ choices of $a\in [s]$, $\{g'(n,h)\Gamma'\}_{h\in \mathcal{B}_{a}}$ is not totally $W^{-C_{0}^{-1}B}$-equidistributed in $G'/\Gamma'$. This contradicts the construction of $\mathcal{N}$ in formula (\ref{defofdensesetn}). We complete the proof.
\end{proof}

\end{document}